\newcommand{\Aa}{\mathcal{A}}
\newcommand{\Dd}{\mathcal{D}}
\newcommand{\Bb}{\mathcal{B}}
\newcommand{\Hh}{\mathcal{H}}
\newcommand{\Ss}{\mathcal{S}}
\newcommand{\Ll}{\mathcal{L}}
\newcommand{\Ii}{\mathcal{I}}
\newcommand{\R}{\mathbb{R}}
\newcommand{\N}{\mathbb{N}}
\newcommand{\Z}{\mathbb{Z}}
\newcommand{\C}{\mathbb{C}}
\newcommand{\formt}{\mathbf{t}}
\newcommand{\forma}{\mathbf{a}}
\newcommand{\formb}{\mathbf{b}}
\newcommand{\Rd}{\R^d}
\newcommand{\norm}[1]{\left\lVert#1\right\rVert}
\newcommand{\abs}[1]{\left\lvert#1\right\rvert}
\newcommand{\set}[2]{\left\{#1:#2\right\}}
\newcommand{\scalarprod}[2]{\left(#1,#2\right)}
\newcommand{\ls}{\operatorname{span}}
\newcommand{\sgn}{\operatorname{sgn}}
\newcommand{\dom}{\operatorname{dom}}
\newcommand{\diag}{\operatorname{diag}}
\newcommand{\dist}{\operatorname{dist}}
\newcommand{\Loneloc}{L^1_{\operatorname{loc}}}
\newcommand{\app}{\sigma_{\operatorname{ap}}}
\newcommand{\re}{\operatorname{Re}}
\newcommand{\im}{\operatorname{Im}}
\newcommand{\e}{\operatorname{e}}
\renewcommand{\d}{{\rm d}\hspace{0.1mm}}
\renewcommand{\i}{{\rm{i}}}
\newcommand{\defeq}{\vcentcolon=}
\newcommand{\eqdef}{=\vcentcolon}
\newtheorem{thm}{Theorem}[section]
\newtheorem{cor}[thm]{Corollary}
\newtheorem{lem}[thm]{Lemma}
\newtheorem{prop}[thm]{Proposition}
\theoremstyle{definition}
\newtheorem{defi}[thm]{Definition}
\newtheorem{exple}[thm]{Example}
\theoremstyle{remark}
\newtheorem{rem}[thm]{Remark}
\numberwithin{equation}{section}
\setlist{leftmargin=9mm}
\begin{document}
\title{Pseudo Numerical Ranges and Spectral Enclosures}
\author{Borbala Gerhat}
\address{Math.\ Institut, Universit\"at Bern, Sidlerstr.\ 5, 3012 Bern,  Switzerland}
\email{borbala.gerhat@math.unibe.ch}
\author{Christiane Tretter}
\address{Math.\ Institut, Universit\"at Bern, Sidlerstr.\ 5, 3012 Bern,  Switzerland}
\email{tretter@math.unibe.ch}

\begin{abstract}
We introduce the new concepts of pseu\-do numerical range for operator functions and families of sesquilinear forms as well as the pseu\-do block numerical range for $n\!\times\! n$ operator matrix functions. While these notions are new even in the bounded case, we cover operator polynomials with unbounded coefficients, unbounded holomorphic form families of type (a) and associated operator families of type (B). Our main results include spectral inclusion properties of pseudo numerical ranges and pseudo block numerical ranges. For diagonally dominant and off-diagonally dominant operator matrices they allow us to prove
spectral enclosures in terms of the pseudo numerical ranges of Schur complements that no longer require dominance order  $0$ and not even 
$\!<\!1$.\! As an application, we establish a new type of spectral bounds for linearly damped wave equations with possibly unbounded and/or \vspace{-8mm} singular~damping.
\end{abstract}

\dedicatory{To the memory of Professor B.\ Malcolm Brown, a dear colleague and \vspace{-2mm} friend}


\maketitle

\section{Introduction}

Spectral problems depending non-linearly on the eigenvalue parameter arise frequently in applications, see e.g.\! the comprehensive collection in \cite{Betcke-Higham-Mehrmann-Schroeder-Tisseur-2013} or the monograph \cite{Moeller-Pivovarchik-2015}. The dependence ranges from quadratic in problems originating in second order Cauchy problems such as damped wave equations, see e.g.\ 
\cite{Jacob-Trunk-2009}, \cite{Jacob-Tretter-Trunk-Vogt-2018}, to rational as in electromagnetic problems with frequency dependent materials such as photonic crystals, see e.g.\ \cite{MR3543766}, \cite{MR4209764}. In addition, if energy dissipation is present due to damping or lossy materials, then the values of the corresponding operator functions need not be selfadjoint.

While for operator functions $T(\lambda)$, $\lambda\!\in\!\Omega \!\subseteq\! \C$, with unbounded operator values in a Hilbert space $\Hh$ the notion of numerical range $W(T)$ \vspace{-0.5mm} exists,
\begin{equation}
\label{intro-c0}
\hspace*{-4mm}
\begin{aligned}
	W(T)\!:=&\!\set{\lambda\!\in\!\Omega}{0\!\in\! W(T(\lambda))} \\
	    \!=&\set{\lambda\!\in\!\Omega}{\exists\, f\!\in\!\dom T(\lambda), f\!\ne\!0, \ (T(\lambda)f,f)\!=\!0},
\end{aligned}
\vspace{-1mm}	
\end{equation}
a spectral inclusion result $\sigma_{\rm ap}(T) \!\subseteq\! \overline{W(T)} \cap \Omega$ for the approximate point spectrum is lacking. Even in the case of bounded values $T(\lambda)$, spectral inclusion only holds under a certain condition that is not easy to verify. Moreover, spectral inclusion results are even lacking for the most important case of quadratic operator polynomials with unbounded coefficients, one of the most relevant cases for applications.

In the present paper we fill these gaps. To this end, we introduce the novel concept of \emph{pseudo numerical range} of operator functions $T(\lambda)$, $\lambda\!\in\!\Omega \!\subseteq\! \C$, with unbounded \vspace{-1mm} values,
\begin{equation}
\label{eq:intro.def.pseudo.nr}
	W_\Psi(T)\defeq\bigcap\nolimits_{\varepsilon>0}W_\varepsilon(T), \quad W_\varepsilon(T)\defeq\bigcup\nolimits_{B\in L(\Hh) \atop \norm{B}<\varepsilon}W(T+B), \quad \varepsilon>0,
\vspace{-2.5mm}
\end{equation}
and analogously for families of unbounded quadratic forms $\formt(\lambda)$, $\lambda\!\in\!\Omega \!\subseteq\! \C$. The sets $W_\varepsilon(T)$, $\varepsilon>0$, can be shown to have the equivalent form
\begin{equation}
\label{eq:intro.W.psi.epsilon} 
  W_\varepsilon(T) =\set{\lambda\in\Omega}{\exists ~f\in\dom T(\lambda), ~\norm{f}=1, ~\abs{(T(\lambda)f,f)}<\varepsilon};
\vspace{-1mm}	
\end{equation}
hence they coincide with the so-called $\varepsilon$-pseudo numerical range first considered in \cite{Engstroem-Torshage-2017}.
As a consequence, the pseudo numerical range $W_\Psi(T)$ can equivalently be described \vspace{-0.5mm} 
as
\begin{equation}
	\label{eq:intro.pseudo.num.id} 
			W_\Psi(T)  \!=\!\big\{\lambda\!\in\!\Omega:0\!\in\!\overline{W(T(\lambda))}\big\} \eqdef W_{\Psi,0}(T).
\vspace{-1mm}
\end{equation}
One could be tempted to think that the condition $0\!\in\!\overline{W(T(\lambda))}$ in $W_{\Psi,0}(T)$ is equivalent to 
$\lambda\!\notin\! \overline{W(T)}$, but this is neither true for operator functions with bounded values, 
as already noted in \cite{Wagenhofer-PhD-2007}, nor for non-monic linear operator pencils for which the set $W_{\Psi,0}(T)$ 
was used recently in \cite{Boegli-Marletta-2019}. 

One of the crucial properties of the pseudo numerical range is that, \emph{without any assumptions on the operator \vspace{-1mm} family},
\begin{equation}
	\app(T)\subseteq W_\Psi(T),
\vspace{-1mm}	
\end{equation}
see Theorem \ref{thm:spec.incl.pseudo.num.ran},  and that the norm of the resolvent of $T$ can be \vspace{-1mm} estimated~by
\begin{equation}
	\norm{T(\lambda)^{-1}}\le \varepsilon^{-1},	\quad \lambda\in\rho(T)\setminus W_\varepsilon(T) \subseteq \rho(T)\setminus W_\Psi(T).
\vspace{-1mm}	
\end{equation}
Not only from the analytical point of view, but also from a computational perspective, the pseudo numerical range seems to be more convenient since it is much easier to determine whether a number is small rather than zero.

Like the numerical range of an operator function, but in contrast to the numerical range 
or essential numerical range of an operator \cite{Kato-1995}, \cite{MR4083777}, \cite{HT-2022}, the pseudo numerical range need~not be convex. An exception is the trivial case of a monic linear operator pencil $T(\lambda)\!=\!A\!-\!\lambda I$, $\lambda \!\in\! \C$, where the pseudo numerical range is simply the closure of the numerical range, $W_\Psi(T)\!=\!\overline{W(T)}\!=\!\overline{W(A)}$. 
In general, we only have the obvious enclosure $W(T) \subseteq W_\Psi(T)$. Neither the interiors nor the closures in $\Omega$ of $W_\Psi(T)$ and $W(T)$ need to coincide  and 
there is also no inclusion either way between $W_\Psi(T)$ or its closure $\overline{W_\Psi(T)}\cap \Omega$ in $\Omega$ and the closure $\overline{W(T)} \cap \Omega$ of $W(T)$ in~$\Omega$; we give various counter-examples to illustrate these~effects.

In our first main result we use the pseudo numerical range of holomorphic form families $\formt (\lambda)$, $\lambda \in \Omega$, of type (a) to prove the spectral inclusion  for the associated holomorphic operator functions $T(\lambda)$, $\lambda \in \Omega$, of type (B) of m-sectorial operators $T(\lambda)$. More precisely, we show that if there exist $k\in\N_0$, $\mu\in\Omega$ and a core $\Dd$ of $\formt(\mu)$ \vspace{-1mm} with 
	\begin{equation}
    \label{eq:intro:ass.pseudo.dense.hol.fam}
		0 \notin \overline{W\big(\formt^{(k)}(\mu)\big|_\Dd\big)},
	\vspace{-1mm}	
	\end{equation}
	then $\sigma(T) \subseteq W_\Psi(\formt)=\overline{W(\formt)} \cap \Omega$ and, if
	in addition, the operator family $T$ has constant domain, \vspace{-1.5mm} then
	\begin{equation}
	\label{intro-c1}
		\sigma(T) \!\subseteq
		\, W_\Psi(T)=\overline{W(T)}\cap \Omega,
	\vspace{-2mm}	
	\end{equation}
see Theorem \ref{thm:pseudo.dense.hol.fam}. Note that, due to \eqref{eq:intro.pseudo.num.id}, condition \eqref{eq:intro:ass.pseudo.dense.hol.fam} for $k\!=\!0$, \vspace{-0.5mm} i.e.\ 
$0 \notin \overline{W\big(\formt(\mu)\big|_\Dd\big)}$ for some $\mu\in \C$, is equivalent to $W_\Psi(T)\ne \Omega$.

For operator polynomials $T(\lambda) = \sum_{k=0}^n \lambda^k A_k$ with domain $\dom T(\lambda)=\bigcap_{k=0}^n \dom A_k$, $\lambda \in \C$, 
we prove that, if $0\notin\overline{W(A_n)}$, \vspace{-0.4mm} then
	\begin{equation}
	\label{eq:introo.poly.pseudo.num.op}
		\app(T) \subseteq W_\Psi(T)\subseteq\overline{W(T)}\cap \Omega,
	\vspace{-2mm}	
	\end{equation}
see Proposition \ref{prop:poly.pseudo.num.op}. The inclusion \eqref{intro-c1} follows if, in addition, $\sigma(T(\lambda)) \!\subseteq\! \overline{W(T(\lambda))}$, $\lambda\!\in\!\C$, which is a weaker condition than m-sectoriality of all 
$T(\lambda)$.

The second new concept we introduce in this paper is the \emph{pseudo block numerical range} of operator functions $\Ll(\lambda)$, $\lambda\in\Omega$, that possess an operator matrix representation with respect to a 
decomposition $\Hh=\Hh_1\oplus\cdots \oplus\Hh_n$, $n\in\N$, of the given Hilbert space $\Hh$. This means \vspace{-1mm} that 
\begin{equation}
\label{eq:intro.op.matrix.fam}
	\Ll(\lambda)=\big( L_{ij} (\lambda) \big)_{i,j=1}^n, \quad  \dom\Ll(\lambda)=\bigoplus\nolimits_{\!j=1}^{\!n} \ \bigcap\nolimits_{i=1}^n \dom L_{ij}(\lambda),
\vspace{-1mm}	
\end{equation}
with operator functions $L_{ij}(\lambda)$, $\lambda\!\in\!\Omega 
$, of densely defined and closable linear operators from $\Hh_j$ to $\Hh_i$, $i$, $j=1,\dots, n$.

Extending earlier concepts we first define the \emph{block numerical range}\vspace{-1mm}  of~$\Ll$~as
\begin{equation}
 	W^{n}(\Ll) \defeq  \bigcup\nolimits_{(f_i)\in\dom\Ll(\lambda) \atop \|f_i\|\!=\!1} 
 	\sigma_p \big( \Ll(\lambda)_{(f_i)}  \big), \quad \Ll(\lambda)_{(f_i)} \!\defeq\! \left( \Ll_{ij}(\lambda) f_j, f_i \right) \!\in\! \C^{n\times n}\!;
	\vspace{-1mm}
\end{equation}
for bounded values $\Ll(\lambda)$ see \cite{MR3302436} and \cite{Tretter-2010} for $n=2$, for unbounded operator matrices $\Ll(\lambda) \!=\! {\mathcal A} - \lambda I_\Hh$ see \cite{Rasulov-Tretter-2018}. Then we introduce the \emph{pseu\-do block numerical range} of $\Ll$ as
\begin{equation}
 	W^n_\Psi(\Ll)\defeq\bigcap\nolimits_{\varepsilon>0}W_\varepsilon^n(\Ll), \qquad 
 	W_\varepsilon^n(\Ll)\defeq \bigcup\nolimits_{\Bb\in L(\Hh)\atop \norm{\Bb}<\varepsilon} W^n(\Ll+\Bb), \quad \varepsilon>0.
	\vspace{-2mm}			
\end{equation}

For $n\!=\!1$ both block numerical range and pseudo block numerical range coincide with the numerical range and pseudo numerical range of $\Ll$, respectively. For $n\!>\!1$, the trivial inclusion $W^n(\Ll) \subseteq W^n_\Psi(\Ll)$ and the characterisation 
\eqref{intro-c0}, \vspace{-1mm} i.e. 
\begin{equation}
	\label{eq:intro.qnr.equiv}
		W^n(\Ll) =\big\{ \lambda \in \Omega: 0\in W^n(\Ll(\lambda)) \big\}, \quad n\in \N,
\vspace{-1mm}		
\end{equation} 
and a resolvent norm \vspace{-1mm} estimate
\begin{equation}
	\norm{\Ll(\lambda)^{-1}}\!\le\! \varepsilon^{-1}, 
	\quad 
	\lambda\!\in\!\rho(\Ll)\setminus W_\varepsilon^{n}(\Ll) \subseteq\! \rho(\Ll)\setminus W_\Psi^n(\Ll), \quad n\!\in\!\N,
\end{equation}
see Theorem \ref{thm:spec.incl.pseudo.qnr} for both, 
continue to hold, but otherwise not much carries over from the case $n\!=\!1$. The first difference is that, for the simplest case $\Ll(\lambda)=\Aa-\lambda I_\Hh$, $\lambda\in \C$, we may have $W_\Psi^n(\Ll)\neq\overline{W^n(\Ll)}$ for $n\!>1\!$, see Example \ref{ex:jordan}.

More importantly, for $n\!>\!1$ the relation \eqref{eq:intro.pseudo.num.id} need not hold for the pseudo block numerical range; here we only have the \vspace{-1mm} inclusion
\begin{equation}
  W_\Psi^{n}(\Ll) \supseteq \set{\lambda\!\in\!\Omega}{0\in\overline{W^{n}(\Ll(\lambda))}} \eqdef W_{\Psi,0}^{n}(\Ll), \quad n\in \N,
\vspace{-1mm}
\end{equation}
see Proposition \ref{prop:nested.def.pseudo.qnr}. Therein we also assess two other candidates $W_{\Psi,i}^{n}(\Ll) \!=\! \bigcap_{\varepsilon>0} W_{\varepsilon,i}^{n}(\Ll)$, $i\!=\!1,2$, for the pseudo block numerical range for which $W_{\varepsilon,1}^{n}(\Ll)$ is defined by the scalar condition $\det  \Ll(\lambda)_{(f_i)} \!<\! \varepsilon$ and $W_{\varepsilon,2}^{n}(\Ll)$ by restricting to \emph{diagonal} perturbations $\Bb\in L(\Hh)$ with $\norm{\Bb}<\varepsilon$. In fact, we show that
\begin{equation}
	\label{eq:introo.pbnri}
		W^n(\Ll) \subseteq W^{n}_{\Psi,1}(\Ll)\subseteq  W_{\Psi,0}^{n}(\Ll)\subseteq W^{n}_{\Psi,2}(\Ll)\subseteq W^{n}_\Psi(\Ll),
	\end{equation}
and that, like the pseudo numerical range, the pseudo block numerical range $W^{n}_\Psi(\Ll)$ has the spectral inclusion property, i.e. 
\begin{equation}
	\app(T) \subseteq W^{n}_\Psi(\Ll) \subseteq W_\Psi(T), \quad n\in \N,
\end{equation}
but, in general, none of the subsets of $W^{n}_\Psi(\Ll)$ in \eqref{eq:introo.pbnri} is large enough to contain $\app(T)$, see Example \ref{ex:jordan}.

Our second main result concerns the most important case $n\!=\!2$, the so-called \emph{quadratic numerical range} and \emph{pseudo quadratic numerical range}. Here we prove a novel type of spectral inclusion for diagonally dominant and off-diagonally dominant $\Ll(\lambda) \!=\! (L_{ij}(\lambda))_{i,j=1}^2$ in terms of the pseudo numerical ranges of the Schur complements $S_1$, $S_2$ and, further, the pseudo quadratic numerical range of $\Ll$,   
\begin{equation}
	\label{eq:intro.mat.fam.schur.app.incl}
		\app(\Ll)\setminus(\sigma(L_{11})\cup\sigma(L_{22})) \subseteq W_\Psi(S_1)\cup W_\Psi(S_2)	\subseteq W^2_\Psi (\Ll),
	\vspace{-1mm}	
	\end{equation}
see Theorem \ref{thm:mat.spec.incl.schur.app}, where $S_1(\lambda)\!=\!L_{11}(\lambda) \!-\! L_{12}(\lambda) L_{22}(\lambda)^{-1} L_{21}(\lambda)$, $\lambda \!\in\! \rho(L_{22})$, and similarly for $S_2$ with the indices $1$ and $2$ reversed. For symmetric and anti-symmetric corners, i.e.\ $L_{21}(\lambda) \subseteq
\pm L_{12}(\lambda)^*$, $\lambda\!\in\!\Omega$, we even show that
\begin{equation}
 \app(\Ll)
 \!\subseteq\! W_\Psi(S_1)\cup W_\Psi(L_{22}),
\end{equation}
if $L_{11}(\lambda)$ is accretive, $\mp L_{22}(\lambda)$ is m-sectorial and $\dom L_{22}(\lambda) \!\subseteq
\! \dom L_{12}(\lambda)$, see Theo\-rem \ref{thm:spec.incl.def.indef}/Corollary \ref{cor:spec.incl.def.indef}, and similarly for the Schur complement~$S_2$.

As an interesting consequence, we are able to establish spectral separation and inclusion theorems for unbounded $2\!\times\!2$ operator matrices $\Aa=(A_{ij})_{i,j=1}^2$ with 'separated' diagonal entries; here 'separated' means that the numerical ranges of $A_{11}$ and $A_{22}$ lie in half-planes and/or sectors in the right and left half-plane $\C_+$  and $\C_-$, respectively, separated by a vertical strip $S\!\defeq\!\{z\!\in\!\C:\delta \!<
\! \re z \!<
\! \alpha\}$ with $\delta\!<\!0\!<\!\alpha$ around~$\i\R$. More precisely, \emph{without} any bounds on the order of diagonal dominance or off-diagonal dominance we show that, if $\varphi$, $\psi \!\in\! [0,\frac \pi 2]$ are the semi-angles of $A_{11}$ and $A_{22}$ and $\tau\!:=\! \max\{\varphi,\psi\}$, then
\begin{equation}
 	\app(\Aa) \subseteq ( - \!\Sigma_\tau \cup \Sigma_\tau ) \setminus S \eqdef \Sigma, \quad
 	\Sigma_\tau \defeq \{z\!\in\!\C: |\arg z| \le \tau \},
\end{equation}
and $\sigma(\Aa) \subseteq 
\Sigma$ if $\rho(\Aa) \cap (\C \setminus \Sigma) \!\ne\! \emptyset$,  see Theorem \ref{thm:spec.incl.BB*}. This result is a great step ahead compared to the earlier result \cite[Thm.\ 5.2]{Tretter-2009} where the dominance order had to be restricted to $0$. 

Moreover, even to ensure the condition $\rho(\Aa) \cap (\C \setminus \Sigma) \!\ne\! \emptyset$ for the enclosure of the entire spectrum $\sigma(\Aa)$ in Theorem \ref{thm:spec.incl.BB*}, we do not have to restrict~the dominance order as usual for perturbation arguments. Our new weak conditions involve only products of the columnwise relative bounds $\delta_1$ in the~first and $\delta_2$ in the second column, see Proposition \ref{thm:full.spec.incl.BB*}; in particular, either $\delta_1\!=\!0$ or  $\delta_2\!=\!0$ guarantees $\rho(\Aa) \cap (\C \setminus \Sigma) \!\ne\! \emptyset$ in Theorem \ref{thm:spec.incl.BB*} 
and~hence~$\app(\Aa) \!\subseteq\! \Sigma$.

As an application of our results, we consider abstract quadratic operator polynomials $T(\lambda)$, $\lambda\!\in\!\C$,  induced by forms $\formt(\lambda)\!=\!\formt_0\!+\!2\lambda\forma\!+\!\lambda^2$ with $\dom\formt(\lambda)=\dom\formt_0$, $\lambda\in\C$, as they arise e.g.\ from linearly damped wave equations
\begin{equation}
\label{intro-c2}
	u_{tt}(x,t)+2a(x)u_t(x,t)=\left(\Delta_x-q(x)\right)u(x,t), \quad x\in\Rd, \quad t>0,
\end{equation}
where the non-negative potential $q$ and damping $a$ may be singular and/or unbounded, cf.\ \cite{Freitas-Siegl-Tretter-2018, Jacob-Tretter-Trunk-Vogt-2018,Jacob-Trunk-2007,Jacob-Trunk-2009} where also accretive damping was considered, and for which it is well-known that the spectrum is symmetric with respect to $\R$ and con\-fined to the closed left half-plane. 

Here we use a finely tuned assumption on the 'unboundedness' of $\forma$ with respect to $\formt_0$, namely \emph{$p$-subordinacy} for $p\!\in\![0,1)$, comp.\ \cite[\S\,5.1]{Markus-1988} or \cite[Sect.~3]{Tretter-Wyss-2014} for the operator case. More precisely, if 
$\formt_0\!\ge\!\kappa_0\!\ge\!0$, $\forma\!\ge\!\alpha_0\!\ge\!0$ with $\dom\formt_0\!\subseteq\!\dom\forma$ and there exist~$p\!\in\![0,1)$ and $C_{p}\!>\!0$ \vspace{-1mm}with
	\begin{equation}
	\label{eq:intro.pencil.subordinate}
	\forma[f]\le C_{p}\big(\formt_0[f]\big)^p \big(\norm{f}^2\big)^{1-p}, \quad f\in\dom\formt_0,
	\end{equation}
we use the enclosure $\sigma(T) \!\subseteq\! W_\Psi(T) \!=\! W_\Psi(\formt) \!=\! \overline{W(\formt)}$ to prove that the non-real spectrum of $T$ satisfies the \vspace{-1mm} bounds
\begin{align*}
    \sigma(T)\setminus \R \! \subseteq  \!
			\Big\{ z\!\in\!\C: \, |z| \ge \sqrt{\kappa_0}, \, & \, \re z\le-\alpha_0,  
			\\[-3mm] &\, \abs{\im z}^2\!\!\ge\! \max\!\big\{0,C_{p}^{-\frac{1}{p}}\!\abs{\re z}^\frac{1}{p}\!\!-\!\abs{\re z}^2\big\}\Big\}
			\\[-7mm]
\end{align*}%
and the real spectrum $ \sigma(T)\cap \R \subset [-\infty,0]$ is either empty or it is confined to one bounded interval, to one unbounded interval or to the disjoint union of a bounded and an unbounded interval
, see Theorem \ref{thm:pencil.spec.incl} and Figure \ref{fig:dwe.spec.incl}. Moreover, we describe both 
the thresholds for the transitions between these cases and the enclosures for $ \sigma(T)\cap \R$ precisely in terms of $p$, $C_p$, $\kappa$ and $\kappa_0$. As a concrete example, we consider the damped wave equation \eqref{intro-c2} \vspace{-2mm}with
\begin{equation}
	\label{eq:intro.dwe.pot.damp.inequ}
	a(x)\!\le\!\sum_{j=1}^n\abs{x\!-\!x_j}^{-t}\!+\! u(x) \!+\! v(x), \ \ v(x) \!\le\! c_1 q(x)^r\!+c_2 \,\ \mbox{ for almost all $x\!\in\! \R^d$},
\vspace{-2mm}
\end{equation}
where $n\!\in\!\N_{0}$, $x_j\!\in\!\R^d$ for $j\!=\!1,\dotsc,n$, $u \!\in\! L^s (\Rd)$ with $s\!>\!\frac d2$, $v\!\in\!\Loneloc(\Rd)$, $t\!\in\![0,2)$,  $c_1$, $c_2\!\ge\!0$ and $r\!\in\![0,1)$. For the special case  $q(x)\!=\!\abs{x}^2$, $a(x)\!=\!\abs{x}^{k}$,  $x\!\in\!\Rd$, with $k \!\in\![0,2)$, the new spectral enclosure in 
\vspace{-1mm}Theorem~\ref{thm:pencil.spec.incl}~yields 
\begin{equation}
	\label{eq:intro.dwe.comp.incl}
		\sigma(T) \setminus\R\subseteq\Big\{z\!\in\!\C:\re z\!\le\!0, \, \abs{z}\!\ge\! \sqrt{d}, \, |\im z| \!\ge\! 
		\sqrt{\max\{0,\abs{\re z}^{\!\frac{2}{k} 
		}\!\!-\!\abs{\re z}^2\}}\Big\}
		\vspace{-2mm}
	\end{equation}
	\vspace{-1mm}and, with $t_0=\max\big\{ \big( k(2-k) \big)^{-\frac 1{k-1}},d\big\}$,
		\begin{align*}
		\sigma(T) \cap \R 
		\begin{cases} 
			= \emptyset & \mbox { if } k\!\in\![0,1), \\
			\subseteq(-\infty,-\sqrt{d}] & \mbox { if }k=1, \\[-1mm]
			\subseteq \!\Big(\!\!-\!\infty,-\sqrt{t_0}^{k}\!+\!\sqrt{t_0^{k}\!-\!t_0 } \,\Big]  & \mbox { if } k\!\in\! (1,2).
		\end{cases}
\end{align*}

The paper is organised as follows. In Section \ref{sec:pseudo.num} we introduce the pseudo numerical range of operator functions and form functions and study the relation  of $W_\Psi(T)$ and $\overline{W(T)}\cap \Omega$. In Section \ref{subsec:pseudo.nr.spec.encl} we establish spectral inclusion results in terms of the pseudo numerical range. In Section \ref{sec:op.mat.fam}  we define the block numerical range $W^n(\Ll)$ and pseudo block 
numerical range $W^{n}_\Psi(\Ll)$ of unbounded $n\!\times\! n$ operator matrix functions $\Ll$, investigate the differences to the special case $n\!=\!1$ of the pseudo numerical range $W_\Psi^1(\Ll)\!=\!W_\Psi(\Ll)$ and prove corresponding spectral inclusion theorems. In Section \ref{sec:schur.app.encl} we establish new
enclosures of the approximate point spectrum of $2\!\times\! 2$ operator matrix functions by means of the pseudo numerical ranges of their Schur complements. In Section \ref{sec:BB*} we apply them to prove 
spectral bounds for diagonally dominant and off-diagonally dominant operator matrices with symmetric or anti-symmetric corners without restriction on the dominance order. 
Finally, in Section \ref{sec:dwe}, we apply our results to linearly damped wave equations with possibly unbounded and/or singular damping and potential.

Throughout this paper, $\Hh$ and $\Hh_i$, $i\!=\!1,\dots,n$, denote Hilbert spaces,~$L(\Hh)$ denotes the space of bounded linear operators on $\Hh$ and $\Omega\!\subseteq\!\C$ is a~domain. 

\section{The pseu\-do numerical range of operator functions and form functions}
\label{sec:pseudo.num}

In this section, we introduce the new notion of pseu\-do numerical range for operator functions $\set{T(\lambda)}{\lambda\in\Omega}$ and form functions $\set{\formt(\lambda)}{\lambda\in\Omega}$, respectively,
briefly denoted by $T$ and $\formt$ if no confusion about $\Omega$ can arise. While the values $T(\lambda)$ and $\formt(\lambda)$ may be bounded/unbounded linear operators and sesquilinear forms in a Hilbert space $\Hh$, the notion of pseudo numerical range is new also in the bounded case.

The \emph{numerical range} of $T$ and $\formt$, respectively, are defined as
\begin{alignat*}{2}
	W(T)\!&=\!\set{\lambda\!\in\!\Omega\!}{\!0\!\in\! W(T(\lambda))} 
	&&=\!\set{\lambda\!\in\!\Omega\!}{\!\exists\, f\!\in\!\dom T(\lambda), f\!\ne\!0, (T(\lambda)f,f)\!=\!0}, 
	\\
	W(\formt)\!&=\!\set{\lambda\!\in\!\Omega\!}{\!0\!\in\! W(\formt(\lambda))}
	\!&&=\!\set{\lambda\!\in\!\Omega\!}{\!\exists\, f\!\in\!\dom \formt(\lambda), \,f\!\ne\!0, \,\formt(\lambda)[f]\!=\!0},
\end{alignat*}
comp.\ \cite[\S\,26]{Markus-1988}. In the simplest case of a monic linear operator polynomial $T(\lambda) = T_0 - \lambda I_\Hh$, $\lambda \in \C$, this notion coincides with the numerical range $W(T_0)$ of the linear operator $T_0$, and analogously for forms;
note that the latter is also denoted by $\Theta(T_0)$, e.g.\ in \cite[Sect.~V.3.2]{Kato-1995}.

The following new concept of pseudo numerical range employs the notion of $\varepsilon$-pseudo 
numerical range $W_\varepsilon(T)$, $\varepsilon>0$, introduced in \cite[Def.\ 4.1]{Engstroem-Torshage-2017}; the equivalent original definition therein, see \eqref{eq:W.psi.epsilon} below, was designed to obtain computable enclosures for spectra of rational operator functions. 

\begin{defi}
\label{def:pseudo-nr}
	We introduce the \emph{pseu\-do numerical range} of an operator function $T$ and a form function $\formt$, respectively, as
	\begin{equation}
		\begin{aligned}
			W_\Psi(T) & \defeq\bigcap_{\varepsilon>0}W_\varepsilon(T),  & \quad W_\Psi(\formt) & \defeq\bigcap_{\varepsilon>0}W_\varepsilon(\formt), 
			\\[-7mm] 
		\end{aligned}
	\end{equation}
	where 
	\begin{equation}
			W_\varepsilon(T) \defeq \bigcup_{B \in L(\Hh), \norm{B}<\varepsilon}W(T+B), \quad W_\varepsilon(\formt) 
			\defeq\bigcup_{\norm{\formb}<\varepsilon}W(\formt+\formb), \quad \varepsilon>0;
	\end{equation}
	here $\norm{\formb}=\sup_{\norm{f}=\norm{g}=1}\abs{\formb[f,g]}$ for a bounded sesquilinear form $\formb$ in $\Hh$.
\end{defi}

Clearly, for monic linear operator polynomials $T(\lambda) = A 
- \lambda I_\Hh$, $\lambda \in \C$, the pseu\-do numerical range is nothing but the closure of the classical numerical range 
$\overline{W(A)}$ of the linear operator $A$, 
and analogously for forms.

The pseudo numerical range of operator or form functions,  is, like their numerical ranges, in general neither convex nor connected, and, even for families of bounded operators or forms, it may be unbounded.

\begin{rem}
	\begin{enumerate}
		\item The following enclosures may be proper, see Example \ref{ex:pseudo.num.spec.incl},
		\begin{equation}
			W(T)\subseteq W_\Psi(T), \qquad W(\formt)\subseteq W_\Psi(\formt).
		\end{equation}
		\item In general, the pseu\-do numerical range need neither be open nor closed in $\Omega$ equipped with the relative topology, 
		see Examples \ref{ex:pseudo.num.spec.incl} (i) and \ref{ex:ODE}, respectively.
		\item Neither the closures nor the interiors with respect to the relative topology on $\Omega$ of the pseudo numerical range and the numerical range need to coincide, see Example \ref{ex:pseudo.num.spec.incl} (i) and (ii).
	\end{enumerate}
\end{rem}

The following alternative characterisation of the pseudo numerical range will be frequently used in the sequel.

\begin{prop}
\label{prop:pseudo.num}
	For every $\varepsilon>0$,
	\begin{align}
	\label{eq:W.psi.epsilon} 
		W_\varepsilon(T) & =\set{\lambda\in\Omega}{\exists ~f\in\dom T(\lambda), ~\norm{f}=1, ~\abs{(T(\lambda)f,f)}<\varepsilon}, \\
		W_\varepsilon(\formt) & =\set{\lambda\in\Omega}{\exists ~f\in\dom \formt(\lambda), ~\norm{f}=1, ~\abs{\formt(\lambda)[f]}<\varepsilon},
	\end{align}	
	and, consequently,
	\begin{align}
	\label{eq:pseudo.num.id} 
		\hspace{-7mm}
		W_\Psi(T) & \!=\!\set{\lambda\!\in\!\Omega}{0\!\in\!\overline{W(T(\lambda))}},  \ \  
		W_\Psi(\formt) \!=\!\set{\lambda\!\in\!\Omega}{0\!\in\!\overline{W(\formt(\lambda))}}.
	\end{align}
\end{prop}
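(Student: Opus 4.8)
The plan is to prove the characterisation \eqref{eq:W.psi.epsilon} of $W_\varepsilon(T)$ by a direct double inclusion, and then deduce \eqref{eq:pseudo.num.id} by letting $\varepsilon \to 0$. I will treat the operator case in detail; the form case is entirely parallel, replacing $(T(\lambda)f,f)$ by $\formt(\lambda)[f]$ and using $\norm{\formb}=\sup_{\norm{f}=\norm{g}=1}\abs{\formb[f,g]}$ throughout.

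\emph{Inclusion $\supseteq$ in \eqref{eq:W.psi.epsilon}.} Suppose $\lambda\in\Omega$ and there is $f\in\dom T(\lambda)$ with $\norm{f}=1$ and $\abs{(T(\lambda)f,f)}<\varepsilon$. Set $c:=(T(\lambda)f,f)\in\C$ and consider the bounded rank-one perturbation $B:=-c\,(\cdot,f)f\in L(\Hh)$, which has $\norm{B}=\abs{c}<\varepsilon$. Then $((T(\lambda)+B)f,f)=c-c\norm{f}^4=0$, so $\lambda\in W(T+B)\subseteq W_\varepsilon(T)$. Note $\dom(T(\lambda)+B)=\dom T(\lambda)$ since $B$ is bounded, so $f$ is an admissible test vector.

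\emph{Inclusion $\subseteq$ in \eqref{eq:W.psi.epsilon}.} Suppose $\lambda\in W_\varepsilon(T)$, i.e.\ there is $B\in L(\Hh)$ with $\norm{B}<\varepsilon$ and $\lambda\in W(T+B)$; so there is $f\in\dom(T(\lambda)+B)=\dom T(\lambda)$, $f\ne 0$, with $((T(\lambda)+B)f,f)=0$. Normalising, we may assume $\norm{f}=1$. Then $(T(\lambda)f,f)=-(Bf,f)$, hence $\abs{(T(\lambda)f,f)}=\abs{(Bf,f)}\le\norm{B}<\varepsilon$ by Cauchy--Schwarz. This proves the reverse inclusion, establishing \eqref{eq:W.psi.epsilon}.

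\emph{Deducing \eqref{eq:pseudo.num.id}.} By definition $W_\Psi(T)=\bigcap_{\varepsilon>0}W_\varepsilon(T)$, and the sets $W_\varepsilon(T)$ are nondecreasing in $\varepsilon$, so it suffices to take the intersection over a sequence $\varepsilon_n\downarrow 0$. Using \eqref{eq:W.psi.epsilon}, $\lambda\in W_\Psi(T)$ iff for every $\varepsilon>0$ there is a unit vector $f_\varepsilon\in\dom T(\lambda)$ with $\abs{(T(\lambda)f_\varepsilon,f_\varepsilon)}<\varepsilon$, which says precisely that $0$ is an accumulation point of $W(T(\lambda))=\set{(T(\lambda)f,f)}{f\in\dom T(\lambda),\,\norm{f}=1}$, i.e.\ $0\in\overline{W(T(\lambda))}$. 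The form case follows verbatim. I do not anticipate a serious obstacle here; the only point requiring a little care is the bookkeeping that a bounded perturbation does not change the domain, so that the test vectors used in $W(T+B)$ and in the $\varepsilon$-condition for $T(\lambda)$ coincide, and that the rank-one perturbation used in the $\supseteq$ direction indeed has operator norm exactly $\abs{c}$.
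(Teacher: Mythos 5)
Your proof is correct and follows essentially the same route as the paper: Cauchy--Schwarz for the inclusion $\subseteq$, an explicit bounded perturbation of norm $\abs{(T(\lambda)f,f)}$ for $\supseteq$, and then the observation that intersecting over $\varepsilon>0$ yields exactly the condition $0\in\overline{W(T(\lambda))}$. The only (cosmetic) difference is your choice of the rank-one perturbation $B=-c\,(\cdot,f)f$ where the paper simply takes the scalar multiple $B=-c\,I$ of the identity; both have norm $\abs{c}$ and do the same job.
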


\begin{proof}
	We show the claim for $W_\varepsilon(T)$; then the claim for $W_\Psi(T)$ is obvious by Definition \ref{def:pseudo-nr}. The proof for $W_\varepsilon(\formt)$ and $W_\Psi(\formt)$ is analogous.

	Let $\varepsilon>0$ be arbitrary and $\lambda\in W_\varepsilon(T)$. There exists a bounded operator $B$ in $\Hh$ with $\norm{B}<\varepsilon$ such that $\lambda\in W(T+B)$, i.e.\
	\begin{equation}
		\scalarprod{T(\lambda)f}{f}=-(Bf,f), \quad f\in\dom T(\lambda), \quad \norm{f}=1.
	\end{equation}
	Hence, clearly, $\abs{\scalarprod{T(\lambda)f}{f}}\le\norm{B}<\varepsilon$, thus $\lambda$ is an element of the right hand side of \eqref{eq:W.psi.epsilon}. 
	
	Conversely, let $\lambda\in\Omega$ such that there exists $f\in\dom T(\lambda)$, $\norm{f}=1$, with $\abs{\scalarprod{T(\lambda)f}{f}}<\varepsilon$. Setting $B\defeq-\scalarprod{T(\lambda)f}{f}I$, this gives $\lambda\in W(T+B)$ and $\norm{B}=\abs{\scalarprod{T(\lambda)f}{f}}<\varepsilon$, hence $\lambda\in W_\varepsilon(T)$.
\end{proof}

The following properties of the pseudo numerical range with respect to closures, form representations and Friedrichs extensions are immediate consequences of its alternative description \eqref{eq:pseudo.num.id}.

Here an operator $A$ or a form $\forma$ is called \emph{sectorial} if its numerical range lies in a sector $\{z\in\C: |\arg(z-\gamma)| \le \vartheta\}$ for some $\gamma \in\R$ and $\vartheta \in [0, \frac \pi 2)$, see \cite[Sect.\ V.3.10, VI.1.2]{Kato-1995}; if, in addition, $\rho(A) \cap \{z \in \C :|\arg(z-\gamma)| > \vartheta \} \neq \emptyset$, then $A$ is called m-sectorial.

\begin{cor}		
\label{cor:pseudo.num}
	\begin{enumerate}
		\item If the family $T$ or $\formt$, respectively, consists of closable operators or forms \textnormal
		{(}and $\overline{T}$ or $\overline{\formt}$ denotes the family of closures\textnormal{)}, then
		\begin{equation}
			W_\Psi(T)=W_\Psi(\overline{T}), \qquad W_\Psi(\formt)=W_\Psi(\overline{\formt}).
		\end{equation}
		\item If the family $\formt$ consists of densely defined closed sectorial forms and $T$ denotes the family of associated m-sectorial operators, then
		\begin{equation}
			W_\Psi(\formt)=W_\Psi(T).
		\end{equation}
		\item If the family $T$ consists of densely defined sectorial operators and $T_F$ denotes the family of corresponding Friedrichs extensions then
		\begin{equation}
			W_\Psi(T)=W_\Psi(T_F). 
		\end{equation}
	\end{enumerate}
\end{cor}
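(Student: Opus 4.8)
The plan is to reduce all three assertions to a single pointwise fact about closures of numerical ranges and then to invoke the characterisation \eqref{eq:pseudo.num.id} separately for each $\lambda\in\Omega$. Concretely, I claim that for a closable operator $A$ in $\Hh$ (resp.\ a closable form, a densely defined sectorial $A$ with Friedrichs extension $A_F$, a densely defined closed sectorial form with associated m-sectorial operator) one has $\overline{W(A)}=\overline{W(\overline A)}$ (resp.\ the analogous identity of closures). Granting this for every $\lambda$, the conditions $0\in\overline{W(T(\lambda))}$ and $0\in\overline{W(\overline T(\lambda))}$ are equivalent, so \eqref{eq:pseudo.num.id} immediately gives $W_\Psi(T)=W_\Psi(\overline T)$, and likewise in cases (ii) and (iii) with the appropriate substitute for $\overline T$. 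Thus the whole statement follows once the pointwise closure identities are in place.

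For (i): the inclusion $W(A)\subseteq W(\overline A)$ is trivial since $\dom A\subseteq\dom\overline A$ with $\overline A|_{\dom A}=A$, hence $\overline{W(A)}\subseteq\overline{W(\overline A)}$. Conversely, given $f\in\dom\overline A$ with $\norm{f}=1$, choose $f_n\in\dom A$ with $f_n\to f$ and $Af_n\to\overline A f$; then $\norm{f_n}\to 1$, the normalised vectors $g_n\defeq f_n/\norm{f_n}$ lie in $\dom A$ with $\norm{g_n}=1$, and $(Ag_n,g_n)\to(\overline A f,f)$, so $(\overline A f,f)\in\overline{W(A)}$. Hence $W(\overline A)\subseteq\overline{W(A)}$ and the reverse inclusion of closures follows. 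The form version is verbatim the same, replacing the pair of convergences $f_n\to f$, $Af_n\to\overline A f$ by convergence $f_n\to f$ in the form norm of $\overline{\formt}$, which yields $\formt[f_n]\to\overline{\formt}[f]$ and $\norm{f_n}\to\norm{f}$; since $\dom\formt$ is by definition a core of $\overline{\formt}$, every $f\in\dom\overline{\formt}$ is reached this way.

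For (ii) and (iii): by the first representation theorem one has $(T(\lambda)f,f)=\formt(\lambda)[f]$ for $f\in\dom T(\lambda)$, so $W(T(\lambda))\subseteq W(\formt(\lambda))$, while $\dom T(\lambda)$ is a core of the closed form $\formt(\lambda)$; the density argument of (i) then gives $\overline{W(\formt(\lambda))}=\overline{W(T(\lambda))}$ for every $\lambda$, and \eqref{eq:pseudo.num.id} yields (ii). For (iii), sectoriality of $T(\lambda)$ is precisely what makes the form $f\mapsto(T(\lambda)f,f)$, $f\in\dom T(\lambda)$, closable; writing $\formt(\lambda)$ for its closure, $T_F(\lambda)$ is the m-sectorial operator associated with $\formt(\lambda)$ and $\dom T(\lambda)$ is a core of $\formt(\lambda)$, so applying part (i) to the operator form and part (ii) to $\formt(\lambda)$ and $T_F(\lambda)$ gives $\overline{W(T(\lambda))}=\overline{W(\formt(\lambda))}=\overline{W(T_F(\lambda))}$; \eqref{eq:pseudo.num.id} concludes. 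There is no deep obstacle here: the only point requiring a little care is the passage to normalised approximating vectors (resp.\ form-convergent sequences) and the observation that the core property of $\dom T(\lambda)$ (resp.\ $\dom\formt(\lambda)$) is exactly what licenses reaching every element of the larger domain, together with the fact that sectoriality is what guarantees closability of the operator form in (iii).
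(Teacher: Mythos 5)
Your proof is correct and follows essentially the same route as the paper: both reduce all three parts to the pointwise identities $\overline{W(T(\lambda))}=\overline{W(\overline{T(\lambda)})}$, $\overline{W(\formt(\lambda))}=\overline{W(T(\lambda))}$ etc.\ and then apply the characterisation \eqref{eq:pseudo.num.id}, with (iii) obtained by combining (i) and (ii). The only difference is that you spell out the normalisation and core/density arguments which the paper simply cites from Kato (Prob.\ V.3.7, Thm.\ VI.1.18, Cor.\ VI.2.3), and these details are carried out correctly.
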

	
\begin{proof}	
	(i) The equalities follow from Proposition \ref{prop:pseudo.num} and from the fact that $\overline{W(T(\lambda))}=\overline{W(\overline{T(\lambda)})}$ and  $\overline{W(\formt(\lambda))}=\overline{W(\overline{\formt}(\lambda))}$ for $\lambda\in\Omega$, see \cite[Prob.\ V.3.7, Thm.\ VI.1.18]{Kato-1995}.
	
	(ii) The equality follows from Proposition \ref{prop:pseudo.num} and the identity $\overline{W(\formt(\lambda))}=\overline{W(T(\lambda))}$ for $\lambda\in\Omega$, see \cite[Cor.\ VI.2.3]{Kato-1995}.
	
	(iii) The claim is a consequence of (i) and (ii).
\end{proof}

The alternative characterisation \eqref{eq:pseudo.num.id} might suggest that there is a relation between the pseudo numerical range $W_\Psi(T)$ and the closure $\overline{W(T)}\cap \Omega $ of the numerical range $W(T)$ in $\Omega$. However, in general, there is no inclusion either way between them, see e.g.\ Example \ref{ex:pseudo.num.spec.incl} where $W_\Psi(T)\not\subseteq\overline{W(T)}\cap \Omega $ and Example \ref{ex:ODE} where $\overline{W(T)} \cap \Omega \not\subseteq W_\Psi(T)$. 

In fact, it was already noted in \cite[Prop.\ 2.9]{Wagenhofer-PhD-2007}, for continuous functions of \emph{bounded} operators and for the more general case of block numerical ranges, that, for \vspace{-1mm} $\lambda \in \Omega$, 
\[
  \lambda \in \overline{W(T)} \implies 0 \in \overline{W(T(\lambda))};
\]  
the converse holds only under additional assumptions. More precisely, for families of bounded linear operators however, the following is known.

\begin{thm}{\cite[Prop.\ 2.9, Prop.\ 2.12, Thm.\ 2.14]{Wagenhofer-PhD-2007}}
\label{thm:bdd.pseudo.num.ran}
	\begin{enumerate}
		\item If $\,T$ is a $($norm-$)$continuous family of bounded linear operators,  \vspace{-1mm} then
		\begin{equation}
			\overline{W(T)} \cap \Omega \subseteq W_\Psi(T).
		\end{equation}
		\item If $\,T$ is a holomorphic family of bounded linear operators and there exist $k\in\N_0$ and $\mu\in\Omega$  \vspace{-1mm} with
		\begin{equation}
		\label{eq:ass.markus.bdd}
			0\notin\overline{W(T^{(k)}(\mu))},
					\vspace{-2mm}
		\end{equation}
		then
		\begin{equation}
			\sigma(T) \subseteq	\overline{W(T)} \cap \Omega  =W_\Psi(T).
		\end{equation}
	\end{enumerate}
\end{thm}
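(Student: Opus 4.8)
The plan is to prove the two parts of Theorem~\ref{thm:bdd.pseudo.num.ran} in sequence, using the alternative characterisation \eqref{eq:pseudo.num.id} of $W_\Psi(T)$ as the engine.

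\textbf{Part (i).} Since $W(T)\subseteq W_\Psi(T)$ always and $W_\Psi(T)=\{\lambda\in\Omega:0\in\overline{W(T(\lambda))}\}$ is characterised in \eqref{eq:pseudo.num.id}, it suffices to show that $W_\Psi(T)$ is closed in the relative topology of $\Omega$, so that it also contains $\overline{W(T)}\cap\Omega$. So let $\lambda_m\in W_\Psi(T)$ with $\lambda_m\to\lambda\in\Omega$. For each $m$ there is $f_m\in\Hh$, $\norm{f_m}=1$, with $\abs{(T(\lambda_m)f_m,f_m)}\le 1/m$ (using \eqref{eq:W.psi.epsilon}, or directly $0\in\overline{W(T(\lambda_m))}$). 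Estimate
\begin{equation}
  \abs{(T(\lambda)f_m,f_m)}\le\abs{(T(\lambda_m)f_m,f_m)}+\abs{\big((T(\lambda)-T(\lambda_m))f_m,f_m\big)}\le\frac1m+\norm{T(\lambda)-T(\lambda_m)},
\end{equation}
and the right-hand side tends to $0$ by norm-continuity of $T$. Hence $0\in\overline{W(T(\lambda))}$, i.e.\ $\lambda\in W_\Psi(T)$, so $W_\Psi(T)$ is relatively closed and $\overline{W(T)}\cap\Omega\subseteq W_\Psi(T)$. This argument only needs that $T$ is bounded (so the perturbation term is controlled uniformly in the unit vectors $f_m$) and norm-continuous; it does not need boundedness of $W(T)$.

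\textbf{Part (ii).} Assume $T$ is holomorphic and $0\notin\overline{W(T^{(k)}(\mu))}$ for some $k\in\N_0$, $\mu\in\Omega$. By part (i) we already have $\overline{W(T)}\cap\Omega=W_\Psi(T)$, so it remains to prove $\sigma(T)\subseteq\overline{W(T)}\cap\Omega$, equivalently, that every $\lambda_0\in\Omega$ with $0\notin\overline{W(T(\lambda_0))}$ lies in $\rho(T)$. Fix such a $\lambda_0$; then $\dist(0,W(T(\lambda_0)))=:d>0$, which gives the resolvent bound $\norm{T(\lambda_0)^{-1}}\le d^{-1}$ provided $T(\lambda_0)$ is in fact boundedly invertible. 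The standard device (this is the Markus-type argument, cf.\ \cite[\S26]{Markus-1988}) is to show that $T(\lambda)$ is Fredholm of index $0$ for all $\lambda\in\Omega$, and that $\ker T(\lambda)=\{0\}$ for $\lambda$ outside $\overline{W(T)}$; then $T(\lambda)$ is bijective there, and the analytic Fredholm theorem, together with the hypothesis \eqref{eq:ass.markus.bdd} that excludes the degenerate case $\det\equiv 0$, pins down $\sigma(T)$ as a discrete set inside $\overline{W(T)}\cap\Omega$ — but in fact we get the full inclusion directly: $0\notin\overline{W(T(\lambda))}$ forces $T(\lambda)$ injective with closed range and, since $W(T(\lambda)^*)=\overline{W(T(\lambda))}$, also $T(\lambda)^*$ injective, hence $T(\lambda)$ is bijective. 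The role of \eqref{eq:ass.markus.bdd} is precisely to guarantee $W_\Psi(T)=\overline{W(T)}\cap\Omega\neq\Omega$, so that $\rho(T)\neq\emptyset$ and the analytic-Fredholm machinery applies connectedly on the domain $\Omega$; without it, one could have $T(\lambda)$ non-invertible for every $\lambda$ (e.g.\ $T\equiv 0$).

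\textbf{Main obstacle.} The substantive point is not part (i), which is a soft closedness argument, but organising part (ii): one must be careful that $0\notin\overline{W(T(\lambda))}$ really yields bijectivity of the bounded operator $T(\lambda)$ (injectivity with closed range is immediate from the numerical-range bound, and surjectivity follows because the adjoint has the same numerical range closure and hence is injective too), and that the hypothesis \eqref{eq:ass.markus.bdd} is invoked correctly to exclude the case $W_\Psi(T)=\Omega$ and to make the analytic Fredholm / holomorphy argument run on all of the connected domain $\Omega$. Since this is quoted verbatim from \cite{Wagenhofer-PhD-2007}, I would present the statement with that citation and only sketch these steps rather than reproving the analytic Fredholm theorem.
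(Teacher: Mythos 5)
The paper does not prove this theorem itself (it is quoted from \cite{Wagenhofer-PhD-2007}), but it reproves both halves in greater generality: Theorem \ref{prop:cont.fam.form} generalises (i) and Theorem \ref{thm:pseudo.dense.hol.fam} generalises (ii), so those proofs are the natural benchmark. Your part (i) is correct and is essentially the paper's argument: the estimate $\abs{(T(\lambda)f_m,f_m)}\le\abs{(T(\lambda_m)f_m,f_m)}+\norm{T(\lambda)-T(\lambda_m)}$ is exactly the continuity bound used in Theorem \ref{prop:cont.fam.form} (there in the form \eqref{eq:cont.fam.form.ass}), and packaging it as ``$W_\Psi(T)$ is relatively closed and contains $W(T)$'' is a harmless reformulation.

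Part (ii) has a genuine gap. You write ``By part (i) we already have $\overline{W(T)}\cap\Omega=W_\Psi(T)$,'' but part (i) gives only the inclusion $\overline{W(T)}\cap\Omega\subseteq W_\Psi(T)$. The reverse inclusion $W_\Psi(T)\subseteq\overline{W(T)}\cap\Omega$ is the substantive content of the theorem, it is precisely where holomorphy and hypothesis \eqref{eq:ass.markus.bdd} enter, and it is false without them: in Example \ref{ex:deriv.cond} one has $W_\Psi(T)=\Omega$ while $\overline{W(T)}\cap\Omega\neq\Omega$, even though $T$ there can be taken bounded and holomorphic. The missing argument is the Montel--Hurwitz step (the proof of \cite[Thm.\ 2.14]{Wagenhofer-PhD-2007}, reproduced in the paper's proof of Theorem \ref{thm:pseudo.dense.hol.fam}): for $\lambda_0\in W_\Psi(T)$ pick unit vectors $f_n$ with $(T(\lambda_0)f_n,f_n)\to0$, set $\varphi_n(\lambda)=(T(\lambda)f_n,f_n)$, use local uniform boundedness and Montel to extract a locally uniform limit $\varphi$, use \eqref{eq:ass.markus.bdd} to conclude $\varphi^{(k)}(\mu)\neq0$ hence $\varphi\not\equiv0$, and then Hurwitz to produce zeros $\lambda_j\in W(T)$ with $\lambda_j\to\lambda_0$. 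Consequently your reading of the role of \eqref{eq:ass.markus.bdd} (``to guarantee $\rho(T)\neq\emptyset$ so the analytic Fredholm machinery applies'') is off target; its role is to force the limit function $\varphi$ to be nonzero. By contrast, your observation that $0\notin\overline{W(T(\lambda))}$ makes the bounded operator $T(\lambda)$ bijective (lower bound for $T(\lambda)$ and for $T(\lambda)^*$, whose numerical range is the complex conjugate of $W(T(\lambda))$) is correct and already gives $\sigma(T)\subseteq W_\Psi(T)$ unconditionally; the analytic Fredholm discussion is not needed and does not repair the missing inclusion.
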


The following simple example from \cite[Ex.\ 2.11]{Wagenhofer-PhD-2007}, which is easily adapted to the unbounded case, shows that condition \eqref{eq:ass.markus.bdd} is essential both for the equality $\overline{W(T)} \cap \Omega =W_\Psi(T)$ and for the spectral inclusion $\sigma(T) \subseteq \overline{W(T)} \cap \Omega $.

\begin{exple}
\label{ex:deriv.cond}
	Let $f:\Omega \to \C$ be holomorphic, $f \not\equiv 0$, $A$ a bounded or un\-bounded linear operator in $\Hh$ with $0\in \sigma(A)$, 
	$0 \in \overline{W(A)}\setminus W(A)$ and~consider
	\[
		T(\lambda) := f(\lambda) A, \quad \dom T(\lambda) \defeq \dom A, \quad \lambda \in \Omega.
	\]
	Then \eqref{eq:ass.markus.bdd} is violated because, for any $k\in \N_0$ and $\mu \in \Omega$, we have $T^{(k)}(\mu) = f^{(k)}(\mu) A$ with $\dom T^{(k)}(\lambda) = \dom A$, $\lambda \in \Omega$, and so $0\!\in\! \overline{W(T^{(k)}(\mu))}$
	since $0 \!\in\! \overline{W(A)}$. Further, it is easy to see~that 
	\[
		\sigma(T)=\Omega, \quad W(T)= \overline{W(T)}\cap \Omega =\{ z \in \Omega: f(z)=0\} \ne \Omega, \quad W_\Psi(T)=\Omega.
	\]
	Thus neither $\overline{W(T)} \cap \Omega  =W_\Psi(T)$ nor the spectral inclusion $\sigma(T) \subseteq
	\overline{W(T)}\cap \Omega $ hold, while $\sigma(T) = W_\Psi(T)$.
\end{exple}

In the sequel we generalise Theorem \ref{thm:bdd.pseudo.num.ran} (i) and (ii) to families of unbounded operators and/or forms, including operator polynomials and sectorial families with constant form domain. In the remaining part of this section, we study the relation between $W_\Psi(T)$ and $\overline{W(T)}\cap\Omega$; results containing spectral enclosures may be found in Section \ref{subsec:pseudo.nr.spec.encl}.

\begin{prop}
\label{prop:poly.pseudo.num.op}
	Let $T$ be an operator polynomial in $\Hh$ of degree $n\in\N$  with $($possibly unbounded$)$ coefficients 
	\vspace{-1mm} $A_k:\Hh\supseteq\dom A_k\to\Hh$, i.e.
	\begin{equation}
		T(\lambda)\defeq\sum_{k=0}^n\lambda^k A_k, \quad \dom T(\lambda)\defeq\displaystyle\bigcap_{k=0}^n\dom A_k, \quad \lambda\in\C.
	\vspace{-1mm}	
	\end{equation}
	If \,$0\notin\overline{W(A_n)}$, \vspace{-1mm} then
	\begin{equation}
	\label{eq:poly.pseudo.num.op}
		W_\Psi(T)\subseteq\overline{W(T)}\cap \Omega,
	\vspace{-2mm}	
	\end{equation}
 	and analogously for form polynomials.
\end{prop}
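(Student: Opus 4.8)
The plan is to combine the alternative characterisation \eqref{eq:pseudo.num.id} of the pseudo numerical range with a scalar-polynomial factorisation argument. Fix $\lambda_0 \in W_\Psi(T)$; by Proposition \ref{prop:pseudo.num} this means $0 \in \overline{W(T(\lambda_0))}$, so there is a sequence of unit vectors $f_m \in \dom T(\lambda_0) = \bigcap_{k=0}^n \dom A_k$ with $\scalarprod{T(\lambda_0)f_m}{f_m} \to 0$ as $m \to \infty$. Since the domains $\dom T(\lambda) = \bigcap_k \dom A_k$ are independent of $\lambda$, each $f_m$ lies in $\dom T(\lambda)$ for every $\lambda \in \C$, and the scalar function
\begin{equation*}
	p_m(\lambda) \defeq \scalarprod{T(\lambda)f_m}{f_m} = \sum_{k=0}^n \lambda^k \scalarprod{A_k f_m}{f_m}
\end{equation*}
is a polynomial in $\lambda$ of degree at most $n$.

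Next I would use the hypothesis $0 \notin \overline{W(A_n)}$ to control the leading coefficient. Set $d \defeq \dist(0, \overline{W(A_n)}) > 0$. For each $m$ the number $\scalarprod{A_n f_m}{f_m}$ lies in $W(A_n)$, since $f_m$ is a unit vector in $\dom A_n$, hence $\abs{\scalarprod{A_n f_m}{f_m}} \ge d$. In particular $p_m$ has degree exactly $n$, so it factors as $p_m(\lambda) = \scalarprod{A_n f_m}{f_m} \prod_{j=1}^n (\lambda - r_j^{(m)})$ with roots $r_1^{(m)}, \dots, r_n^{(m)} \in \C$. Evaluating at $\lambda_0$ gives $\prod_{j=1}^n \abs{\lambda_0 - r_j^{(m)}} = \abs{p_m(\lambda_0)} \big/ \abs{\scalarprod{A_n f_m}{f_m}} \le \abs{\scalarprod{T(\lambda_0)f_m}{f_m}} / d$, so if $r^{(m)}$ denotes a root of $p_m$ closest to $\lambda_0$ then $\abs{\lambda_0 - r^{(m)}}^n \le \abs{\scalarprod{T(\lambda_0)f_m}{f_m}} / d \to 0$ as $m \to \infty$.

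Finally I would assemble the conclusion. Since $\Omega$ is open and $\lambda_0 \in \Omega$, for all sufficiently large $m$ we have $r^{(m)} \in \Omega$, and by construction $\scalarprod{T(r^{(m)})f_m}{f_m} = p_m(r^{(m)}) = 0$ with $f_m \in \dom T(r^{(m)})$ and $\norm{f_m} = 1$; hence $r^{(m)} \in W(T)$. Letting $m \to \infty$ yields $\lambda_0 = \lim_m r^{(m)} \in \overline{W(T)} \cap \Omega$, which is \eqref{eq:poly.pseudo.num.op}. The form polynomial case is verbatim the same, with $\scalarprod{T(\lambda)f}{f}$ replaced by $\formt(\lambda)[f]$ and $\overline{W(\formt(\lambda))}$ in place of $\overline{W(T(\lambda))}$.

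I do not expect a genuine obstacle here: the only step that requires a moment's thought is the factorisation estimate, i.e.\ that smallness of $\abs{p_m(\lambda_0)}$ forces a root of $p_m$ near $\lambda_0$, and this is immediate once the leading coefficient is bounded away from zero. The unboundedness of the lower-order coefficients $A_k$, $k < n$, is harmless, because they enter only through the non-leading coefficients of $p_m$, whose magnitude plays no role in the argument; the constant form domain is what makes the scalar polynomial $p_m$ well defined on all of $\C$ for a fixed $f_m$.
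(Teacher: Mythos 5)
Your proof is correct and follows essentially the same route as the paper's: both pass from $0\in\overline{W(T(\lambda_0))}$ to the scalar polynomials $p_m(\lambda)=(T(\lambda)f_m,f_m)$, use $0\notin\overline{W(A_n)}$ to keep the leading coefficient bounded away from zero, and extract from the factorisation a root of $p_m$ (hence a point of $W(T)$) converging to $\lambda_0$. Your quantitative estimate $\abs{\lambda_0-r^{(m)}}^n\le\abs{p_m(\lambda_0)}/d$ just makes explicit the root-selection step that the paper states more briefly.
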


\begin{proof}
	Let $\lambda_0\in W_{\Psi}(T)$. By Proposition \ref{prop:pseudo.num}, there is a sequence $\{f_m\}_m\subseteq\dom T(\lambda_0)$ with $\norm{f_m}=1$, $m\in\N$, and $(T(\lambda_0)f_m,f_m)\to0$ for $m\to\infty$. Since $0\notin W(A_n)$ by assumption, the complex \vspace{-2mm} polynomial
	\begin{equation}
		p_m(\lambda)\defeq(T(\lambda)f_m,f_m)=\sum_{k=0}^n(A_kf_m,f_m)\lambda^k, \quad \lambda\in\C,
	\vspace{-2mm}	
	\end{equation}
	has degree $n$ for each $m\in\N$. Let $\lambda^m_1,\dotsc,\lambda^m_n\in\C$ denote its zeros. Then $\lambda^m_j\in W(T)$, $j=1,\dotsc,n$, and $p_m$ admits the \vspace{-2mm} factorisation
	\begin{equation}
		p_m(\lambda)=(A_nf_m,f_m)\prod_{j=1}^{n}(\lambda-\lambda^m_j), \quad \lambda\in\C, \quad m\in\N.
	\vspace{-2mm}
	\end{equation}
	Since $p_m(\lambda_0)\to0$ for $m\to\infty$ and $0\notin\overline{W(A_n)}$, there exists $j_0\in\{1,\dotsc,n\}$ with $\lambda^m_{j_0}\to\lambda_0$, $m\to\infty$, thus $\lambda_0\in\overline{W(T)}$ and $\lambda_0 \in W_\Psi (T) \subseteq \Omega$.
\end{proof}

Next we generalise Theorem \ref{thm:bdd.pseudo.num.ran} (i) to families of sectorial forms with constant domain 
which satisfy a natural continuity assumption, see \cite[Thm.\ VI.3.6]{Kato-1995}. This assumption is met, in particular, by holomorphic form families of type (a) and associated operator families of type (B).

Recall that a family $\formt$ of densely defined closed sectorial sesquilinear forms in $\Hh$ is called holomorphic of type (a) if its domain is constant and the mapping $\lambda\mapsto\formt(\lambda)[f]$ is holomorphic for every $f\in\Dd_\formt\!\defeq\!\dom\formt(\lambda)$. The associated family $T$ of m-sectorial operators is called holomorphic of type~(B), see \cite[Sect.~VII.4.2]{Kato-1995} and also \cite{MR3850318}. Sufficient conditions on form families to be holomorphic of type (a) can be found in \cite[\S VII.4]{Kato-1995}. 

\begin{thm}
\label{prop:cont.fam.form}
\label{prop:hol.fam.incl.i}
	Let $\formt$ be a family of sectorial sesquilinear forms in $\Hh$ with constant domain $\Dd_\formt\defeq\dom\formt(\lambda)$, $\lambda\in\Omega$. Assume that for each $\lambda_0\in\Omega$, there exist $r$, $C>0$ and $w:B_r(\lambda_0)\to[0,\infty)$, $\lim_{\lambda\to\lambda_0}w(\lambda)=0$, such  \vspace{-1mm} that
	\begin{equation}
	\label{eq:cont.fam.form.ass}
		\abs{\formt(\lambda_0)[f]-\formt(\lambda)[f]}\le w(\lambda)\left( \abs{\re\formt(\lambda_0)[f]}+C\norm{f}^2\right)
	 \vspace{-1mm}
	\end{equation}
	for all $\lambda\in B_r(\lambda_0)$ and $f\in\Dd_\formt$.  \vspace{-1mm} Then
	\begin{equation}
		\overline{W(\formt)}\cap \Omega \subseteq W_\Psi(\formt).
	 \vspace{-1mm}	
	\end{equation}
	In particular, 	if $\formt$ is a holomorphic form family of type \textnormal{(a)} with associated holomorphic operator family $T$ of type \textnormal{(B)} in $\Hh$,  \vspace{-1mm}then
	\begin{equation}
	\label{eq:sect-nr-psnr}
		\overline{W(T)}\cap \Omega \subseteq W_\Psi(T), \qquad \overline{W(\formt)}\cap \Omega \subseteq W_\Psi(\formt).
	\end{equation}
\end{thm}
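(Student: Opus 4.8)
The plan is to take $\lambda_0 \in \overline{W(\formt)} \cap \Omega$ and show $\lambda_0 \in W_\Psi(\formt)$, i.e., by Proposition~\ref{prop:pseudo.num}, that $0 \in \overline{W(\formt(\lambda_0))}$. Since $\lambda_0 \in \overline{W(\formt)}$, there is a sequence $\lambda_n \to \lambda_0$ with $\lambda_n \in W(\formt)$, so for each $n$ there exists $f_n \in \Dd_\formt$ with $\norm{f_n} = 1$ and $\formt(\lambda_n)[f_n] = 0$. The goal is to control $\formt(\lambda_0)[f_n]$ using the continuity assumption~\eqref{eq:cont.fam.form.ass}: for $n$ large enough that $\lambda_n \in B_r(\lambda_0)$ we get
\[
	\abs{\formt(\lambda_0)[f_n]} = \abs{\formt(\lambda_0)[f_n] - \formt(\lambda_n)[f_n]} \le w(\lambda_n)\big( \abs{\re \formt(\lambda_0)[f_n]} + C \big).
\]
If $\abs{\re \formt(\lambda_0)[f_n]}$ were bounded uniformly in $n$, then $w(\lambda_n) \to 0$ would force $\formt(\lambda_0)[f_n] \to 0$, giving $0 \in \overline{W(\formt(\lambda_0))}$ and finishing the argument.

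So the key remaining step is a uniform bound on $\abs{\re \formt(\lambda_0)[f_n]}$, and this is where I expect the main obstacle to lie. Write $a := \abs{\re \formt(\lambda_0)[f_n]}$ for brevity (suppressing $n$); since $\formt(\lambda_0)$ is sectorial, $\re \formt(\lambda_0)[f_n]$ is bounded below by some $\gamma \in \R$, so it suffices to bound it above, i.e. to bound $\re \formt(\lambda_0)[f_n]$ when it is large and positive. From the displayed inequality, $\abs{\formt(\lambda_0)[f_n]} \le w(\lambda_n)(a + C)$, and in particular $a \le \abs{\formt(\lambda_0)[f_n]} \le w(\lambda_n)(a+C)$. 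Since $w(\lambda_n) \to 0$, for $n$ large we have $w(\lambda_n) \le \tfrac12$, whence $a \le \tfrac12 a + \tfrac12 C$, i.e. $a \le C$. Thus $\abs{\re \formt(\lambda_0)[f_n]} \le C$ automatically holds for all large $n$ — the uniform bound is self-improving from the very inequality we are trying to use, so the obstacle essentially dissolves. (One should also ensure $\lambda_0 \in \Omega$ is interior so that $B_r(\lambda_0) \subseteq \Omega$ and the hypothesis applies; this is fine since $\Omega$ is open.)

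Feeding $a \le C$ back in gives $\abs{\formt(\lambda_0)[f_n]} \le 2 C \, w(\lambda_n) \to 0$, so $0$ is a limit point of $W(\formt(\lambda_0))$, hence $0 \in \overline{W(\formt(\lambda_0))}$ and $\lambda_0 \in W_\Psi(\formt)$ by Proposition~\ref{prop:pseudo.num}. This proves $\overline{W(\formt)} \cap \Omega \subseteq W_\Psi(\formt)$.

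For the ``in particular'' statement, the plan is simply to verify that a holomorphic form family of type~(a) satisfies the continuity assumption~\eqref{eq:cont.fam.form.ass}; this is precisely the content of \cite[Thm.~VII.4.8 and its proof, cf.\ Thm.~VI.3.6]{Kato-1995}, where the bound~\eqref{eq:cont.fam.form.ass} appears as the standard estimate for holomorphic families of type~(a). Applying the first part to $\formt$ gives $\overline{W(\formt)} \cap \Omega \subseteq W_\Psi(\formt)$; the corresponding statement for the associated type~(B) operator family $T$ then follows from Corollary~\ref{cor:pseudo.num}~(ii), which gives $W_\Psi(\formt) = W_\Psi(T)$, together with the identity $\overline{W(\formt(\lambda))} = \overline{W(T(\lambda))}$ for all $\lambda \in \Omega$ (\cite[Cor.~VI.2.3]{Kato-1995}), whence $\overline{W(T)} \cap \Omega = \overline{W(\formt)} \cap \Omega \subseteq W_\Psi(\formt) = W_\Psi(T)$.
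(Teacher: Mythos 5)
Your proposal is correct and follows essentially the same route as the paper: the same approximating sequences $\lambda_n\to\lambda_0$, $\formt(\lambda_n)[f_n]=0$, the same use of \eqref{eq:cont.fam.form.ass}, and the same self-improving bound (the paper writes it as $\abs{\formt(\lambda_0)[f_n]}\le C\,w(\lambda_n)/(1-w(\lambda_n))$, you as $\le 2Cw(\lambda_n)$ after first deducing $\abs{\re\formt(\lambda_0)[f_n]}\le C$ — same mechanism), and the verification of \eqref{eq:cont.fam.form.ass} for type (a) families via Kato's estimate VII.(4.7) after a shift. One small imprecision at the end: the identity $\overline{W(\formt(\lambda))}=\overline{W(T(\lambda))}$ for each fixed $\lambda$ does not yield the set equality $\overline{W(T)}\cap\Omega=\overline{W(\formt)}\cap\Omega$ (these are closures of sets of spectral parameters, not of numerical ranges of the values); however, all you need is the inclusion $W(T)\subseteq W(\formt)$, which is immediate since $(T(\lambda)f,f)=\formt(\lambda)[f]$ on $\dom T(\lambda)\subseteq\Dd_\formt$, and this is exactly how the paper argues.
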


\begin{proof}
	Let $\lambda_0\in\overline{W(\formt)}$. Then there exist $\{\lambda_n\}_n\subseteq\Omega$ and $\{f_n\}_n\subseteq\Dd_\formt$ with $\norm{f_n}=1$, $\formt(\lambda_n)[f_n]=0$, $n\in\N$, and $\lambda_n\to\lambda_0$, $n\to\infty$. We show that $\formt(\lambda_0)[f_n]\!\to\!0$ for $n\!\to\!\infty$ which, in view of \eqref{eq:pseudo.num.id}, implies $\lambda_0\!\in\! W_\Psi(\formt)$.  \vspace{-1mm} By~\eqref{eq:cont.fam.form.ass},
	\begin{equation}
		\abs{\formt(\lambda_0)[f_n]}=\abs{\formt(\lambda_0)[f_n]-\formt(\lambda_n)[f_n]}\le w(\lambda_n)\left(|\re\formt(\lambda_0)[f_n]|+C\right), \quad n\in\N.
	 \vspace{-1mm}
	\end{equation}
	Since $\abs{\re\formt(\lambda_0)[f_n]}\le\abs{\formt(\lambda_0)[f_n]}$ and $w(\lambda_n)\to0$, $n\to\infty$, we obtain that, for $n\in\N$ sufficiently  \vspace{-1mm} large,
	\begin{equation*}
		\abs{\formt(\lambda_0)[f_n]}\le C\frac{w(\lambda_n)}{1-w(\lambda_n)}\longrightarrow 0, \quad n\to\infty.
	 \vspace{-1mm}
	\end{equation*}

	Now suppose that $\formt$ and $T$ are holomorphic families of type \textnormal{(a)}  and \textnormal{(B)}, respectively. We only need to show the second inclusion, the first one then follows from $W(T)\subseteq W(\formt)$ and Corollary \ref{cor:pseudo.num} (ii). The second inclusion follows from what we already proved since for holomorphic form families of type (a), after a possible shift $\formt\!+\!c$ where $c\!>\!0$ is sufficiently large to ensure $\re\formt(\lambda_0)\!\ge\!1$, \cite[Eqn.\ VII.(4.7)]{Kato-1995} shows that assumption \eqref{eq:cont.fam.form.ass} is satisfied.
\end{proof}

Theorem \ref{thm:bdd.pseudo.num.ran} (i) does not extend to analytic families of sectorial linear operators with non-constant form domains, as the following example inspired by \cite[Ex.~VII.1.4]{Kato-1995} illustrates. 

\begin{exple}
\label{ex:ODE}
	Let $\Hh=L^2(0,1)$. The family $T(\lambda)$, $\lambda\in\C$, given by
	\begin{equation}
		\begin{aligned}
			T(\lambda)f & \defeq-f''-\lambda f, \\
			\dom T(\lambda) & \defeq\set{f\in H^2(0,1)}{f(0)=0, \, \lambda f'(1)=f(1)},
		\end{aligned}
	\end{equation}
	is a holomorphic family of m-sectorial operators, but not holomorphic of type~(B). Below we will show  \vspace{-1mm} that
	\begin{align}
	\label{eq:veryverylast}
	   0\in\overline{W(T)} \subseteq \overline{W_\Psi (T)}, \qquad 0\notin W_\Psi(T)
	   ;
	 \vspace{-1mm}
	\end{align}
	note that, since $\Omega \!=\! \C$, this implies that the conclusion of Theorem \ref{thm:bdd.pseudo.num.ran} (i) does not hold and that 
	$W_\Psi(T)$ is not closed in $\C$. 

	Indeed, it is not difficult to check that the forms associated to $T(\lambda)$, \vspace{-1mm} $\lambda\in\C$,  
	\begin{equation}
		\formt (0) [f] = \|f'\|^2, \quad \formt (\lambda) [f] = \|f'\|^2 - \lambda \norm{f}^2 - \frac1\lambda |f(1)|^2, 
		\quad \lambda \in\!\C\!\setminus\!\{0\},
	\end{equation} 
	are densely defined, closed and sectorial, but have $\lambda$-depending domain	$\dom \formt(0) \!=\! H_0^1(0,1)$ and $\dom \formt(\lambda)\!=\! \set{f \in H^1(0,1)}{f(0)=0}$ for $\lambda\in\!\C\!\setminus\!\{0\}$. The holomorphy of the family follows from the holomorphy of the integral kernel, i.e.\ the Green's function, of $(T(\lambda)-\mu)^{-1}$, which, for $\lambda\in\C$ and $\mu\in\rho(T(\lambda))\neq\emptyset$, is given by
	\begin{equation}
		G(x,y;\mu,\lambda)=
		\frac{\sin(\sqrt{\mu\!+\!\lambda}x)(\sin(\sqrt{\mu\!+\!\lambda}(1\!-\!y))\!-\!\lambda\sqrt{\mu\!+\!\lambda}\cos(\sqrt{\mu\!+\!\lambda}(1\!-\!y)))}
		{\sqrt{\mu\!+\!\lambda}(\sin\sqrt{\mu\!+\!\lambda}-\lambda\sqrt{\mu\!+\!\lambda}\cos\sqrt{\mu\!+\!\lambda}
		)}
	\end{equation}
	for $0\le x\le y\le1$ and $G(x,y;\mu,\lambda)=G(y,x;\mu,\lambda)$ for $0\le y\le x\le1$, cf.\ \cite[Ex.\ V.4.14, VII.1.5, VII.1.11]{Kato-1995} where the family $T(\lambda) + \lambda$, $\lambda \in \C$, 
	was~studied. 
	
  	For fixed $\lambda\in\C$, the spectrum of $T(\lambda)$ is given by the singularities 
  	of the integral kernel $G(\cdot,\cdot;\mu,\lambda)$,
	\begin{equation}
		\begin{aligned}
			\sigma(T(\lambda)) \setminus \{-\lambda\}
		& 	\!=\!\sigma_{\operatorname{p}}(T(\lambda)) \setminus \{-\lambda\}
			\!=\!\big\{\mu\!\in\!\C \setminus \{-\lambda\}:\lambda\sqrt{\mu\!+\!\lambda}=\tan\sqrt{\mu\!+\!\lambda}\big\}.
		\end{aligned}
	\vspace{-1mm}	
	\end{equation}
	For $\lambda\in(0,\infty)$ the  operator $T(\lambda)$ is self-adjoint and unbounded from above, and for $\lambda \!\in\! (0,1)$ it has an eigenvalue 
	$\mu_\lambda \in \sigma_{\operatorname{p}}(T(\lambda)) \subseteq W(T(\lambda))$ 
	of the form $\mu_\lambda = -\lambda - \kappa_\lambda^2 <0$ where $\kappa_\lambda$ is the unique positive solution of $\tanh \kappa = \lambda \kappa$. 
	Thus $0 \in W(T(\lambda))$ for $\lambda \in (0,1)
	$ due to the convexity of $W(T(\lambda))$, which proves $(0,1) \subseteq W(T) \subseteq W_\Psi (T)$ and thus $0\in\overline{W(T)}$
	. On the other hand, $0\notin\overline{W(T(0))}=[\pi^2,\infty)$ and so Proposition~\ref{prop:pseudo.num} 
	implies~$0\notin W_\Psi(T)$.
\end{exple}

\section{Spectral enclosure via pseudo numerical range}
\label{subsec:pseudo.nr.spec.encl}

In this section we derive spectral enclosures for families of unbounded linear operators $T(\lambda)$, $\lambda \in\Omega$, using the pseu\-do numerical range $W_\Psi(T)$. The latter is tailored to enclose the approximate point spectrum.

The spectrum and resolvent set of an operator family $T(\lambda)$, $\lambda \in \Omega$, respectively, are defined \vspace{-1mm} as 
\begin{equation}
	\sigma(T):=\set{\lambda\in\Omega}{0\in\sigma(T(\lambda))} \subseteq 
	\Omega, 
	\quad \rho(T):=\Omega \setminus \sigma(T),
\end{equation}
and analogously for the various subsets of the spectrum. In addition to the approximate point \vspace{-1mm} spectrum 
\[
	\app(T) \defeq 
	\set{\lambda\in\Omega}{\exists \, \{f_n\}_{n}\subseteq\dom T(\lambda), \norm{f_n}=1, T(\lambda)f_n\to 0, n\to\infty}, 
\]
we introduce the \emph{$\varepsilon$-approximate point spectrum}, see \cite{MR1217705} for the \vspace{-1mm}operator~case,
\begin{equation}
\label{eq:eps-ap-spec}
	\sigma_{{\rm ap}, \varepsilon}(T) \defeq \set{\lambda\in\Omega}{\exists \, f\in\dom T(\lambda), \,\norm{f}=1, \, \norm{T(\lambda)f}<\varepsilon}.
\end{equation}
The latter is a subset of the $\varepsilon$-pseudo \vspace{-1mm}spectrum 
\[
 	\sigma_\varepsilon(T) := \sigma_{{\rm ap}, \varepsilon}(T) \cup\sigma(T),
\]
which was defined for operator functions with unbounded closed values in \cite[Sect.\ 9.2, (9.9)]{MR2359869}, comp.\ also \cite{MR2158921}.

Clearly, for monic linear polynomials $T(\lambda)= A \!-\! \lambda I_{\Hh}$, $\lambda \!\in\! \C$, these notions coincide with the  
spectrum, resolvent set, approximate point spectrum, $\varepsilon$-approximate point spectrum and $\varepsilon$-pseudo spectrum of the linear operator~$A$. 

\begin{prop}
\label{thm:spec.incl.pseudo.num.ran}
	For any operator family $T(\lambda)$, $\lambda \in \Omega$, and every $\varepsilon > 0$,
	we have \vspace{-1mm} $\sigma_{{\rm ap}, \varepsilon}(T) \subseteq W_\varepsilon(T)$,
	\begin{equation}
	\label{eq:app-psi}
	 		\norm{T(\lambda)^{-1}}\le\frac{1}{\varepsilon}, \quad \lambda\in\rho(T)\setminus W_\varepsilon(T),
	\vspace{-1mm} \end{equation}
	\vspace{-1mm} and hence
	\begin{equation}
	\label{eq:res.est}
	   \app(T)\subseteq W_\Psi(T).
	\end{equation}
	If $\sigma(T(\lambda))\subseteq\overline{W(T(\lambda))}$ for all $\lambda\in\Omega$, then
	\begin{equation}
		\sigma(T)\subseteq W_\Psi(T).
	\end{equation}
\end{prop}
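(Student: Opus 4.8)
The plan is to prove the three assertions in the order they are stated: first the inclusion $\sigma_{\mathrm{ap},\varepsilon}(T)\subseteq W_\varepsilon(T)$, then the resolvent bound \eqref{eq:app-psi}, then deduce \eqref{eq:res.est} by intersecting over $\varepsilon>0$, and finally treat the statement about $\sigma(T)$.

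\textbf{Step 1: $\sigma_{\mathrm{ap},\varepsilon}(T)\subseteq W_\varepsilon(T)$.} Let $\lambda\in\sigma_{\mathrm{ap},\varepsilon}(T)$, so there is $f\in\dom T(\lambda)$ with $\norm{f}=1$ and $\norm{T(\lambda)f}<\varepsilon$. By Cauchy--Schwarz, $\abs{(T(\lambda)f,f)}\le\norm{T(\lambda)f}\norm{f}<\varepsilon$, and then the alternative characterisation \eqref{eq:W.psi.epsilon} from Proposition \ref{prop:pseudo.num} immediately gives $\lambda\in W_\varepsilon(T)$.

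\textbf{Step 2: the resolvent bound \eqref{eq:app-psi}.} Let $\lambda\in\rho(T)\setminus W_\varepsilon(T)$. Since $\lambda\notin W_\varepsilon(T)$, by \eqref{eq:W.psi.epsilon} we have $\abs{(T(\lambda)f,f)}\ge\varepsilon$ for every $f\in\dom T(\lambda)$ with $\norm{f}=1$, hence $\norm{T(\lambda)f}\ge\varepsilon\norm{f}$ for all $f\in\dom T(\lambda)$. Because $\lambda\in\rho(T)$, the operator $T(\lambda)$ is boundedly invertible, so for any $g\in\Hh$ put $f\defeq T(\lambda)^{-1}g\in\dom T(\lambda)$ to obtain $\norm{g}=\norm{T(\lambda)f}\ge\varepsilon\norm{T(\lambda)^{-1}g}$; taking the supremum over $\norm{g}=1$ yields $\norm{T(\lambda)^{-1}}\le\varepsilon^{-1}$.

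\textbf{Step 3: $\app(T)\subseteq W_\Psi(T)$.} If $\lambda\in\app(T)$ then for each $\varepsilon>0$ there is $f$ with $\norm{f}=1$ and $\norm{T(\lambda)f}<\varepsilon$, so $\lambda\in\sigma_{\mathrm{ap},\varepsilon}(T)\subseteq W_\varepsilon(T)$ by Step 1; since this holds for every $\varepsilon>0$, $\lambda\in\bigcap_{\varepsilon>0}W_\varepsilon(T)=W_\Psi(T)$ by Definition \ref{def:pseudo-nr}. \textbf{Step 4: the last claim.} If $\sigma(T(\lambda))\subseteq\overline{W(T(\lambda))}$ for all $\lambda\in\Omega$, then $\lambda\in\sigma(T)$ means $0\in\sigma(T(\lambda))\subseteq\overline{W(T(\lambda))}$, i.e.\ $\lambda\in\{\mu\in\Omega:0\in\overline{W(T(\mu))}\}=W_\Psi(T)$ by the identity \eqref{eq:pseudo.num.id} of Proposition \ref{prop:pseudo.num}. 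I do not anticipate a genuine obstacle here: the whole argument rests on Proposition \ref{prop:pseudo.num} together with the elementary Cauchy--Schwarz estimate and the definition of the resolvent; the only point requiring a little care is that in Step 2 one must use $\lambda\in\rho(T)$ to know $T(\lambda)^{-1}$ is \emph{everywhere defined and bounded} before the lower bound $\norm{T(\lambda)f}\ge\varepsilon\norm f$ can be turned into the operator-norm estimate.
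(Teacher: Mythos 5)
Your proof is correct and fills in exactly the argument the paper sketches: the paper's own proof is a one-line remark that the claims "follow easily" from the definition of $\sigma_{{\rm ap},\varepsilon}(T)$, Definition \ref{def:pseudo-nr}, Cauchy--Schwarz and Proposition \ref{prop:pseudo.num}, and your four steps are precisely that chain (Cauchy--Schwarz for the inclusion, its contrapositive plus bounded invertibility for the resolvent bound, intersection over $\varepsilon$, and the identity \eqref{eq:pseudo.num.id} for the last claim). No gaps; the care you take in Step 2 about $T(\lambda)^{-1}$ being everywhere defined is exactly the right point to flag.
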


\begin{proof}
	The claims follow easily from \eqref{eq:eps-ap-spec} and Definition \ref{def:pseudo-nr} together with Cauchy-Schwarz' inequality and \eqref{eq:W.psi.epsilon} in Proposition \ref{prop:pseudo.num}.
\end{proof}

The following simple examples illustrate some properties of $W_\Psi(T)$ versus $\overline{W(T)}\cap \Omega $, in particular, in view of spectral enclosures.

\begin{exple}
\label{ex:pseudo.num.spec.incl}
	\begin{enumerate}
		\item Let $A\!>\!0$ be self-adjoint in $\Hh$ with $0\!\in\!\sigma(A)$. Then,~for the non-holomorphic family $T(\lambda)\!=\!A\!+\!\abs{\sin\lambda}$, $\lambda \!\in\! \Omega\!:=\C$, it is easy to see~that
		\begin{equation}
			W_\Psi(T)=\sigma(T)=\set{k\pi}{k\in\Z}\not\subseteq\overline{W(T)}\cap \Omega =\emptyset;
		\end{equation}
		notice that this implies $\overline{W_\Psi (T)}\cap \Omega \neq \overline{W(T)}\cap \Omega$, i.e.\ the closures of $W_\Psi(T)$ and $W(T)$ in $\Omega$ do not coincide.
		\item Let $A$ be bounded in $\Hh$ with $\re W(A)>0$, $0\in\sigma(A)$ and $0\notin W(A)$. Consider the  holomorphic family of bounded operators in $\Hh\oplus\Hh$
		\begin{equation}
			T(\lambda)=\left(\begin{array}{cc}
				\lambda A & 0 \\
				0 & \lambda\operatorname{Log}(\lambda+1) I_{\Hh}
			\end{array}\right), \quad \lambda\in \Omega:= \C\setminus(-\infty,-1];
		\end{equation}
		here $\operatorname{Log}:\C\setminus(-\infty,0]\to\set{z\in\C}{\im z\in(-\pi,\pi]}$ denotes the 
		principal value of the complex logarithm. 
		
		This family does not satisfy condition \eqref{eq:ass.markus.bdd} in Theorem~\ref{thm:bdd.pseudo.num.ran} since $0 \in \overline{W(A)}$ by assumption. It is not difficult to show~that
		\begin{equation}
			W_\Psi(T)=\sigma(T)= \C\setminus(-\infty,-1]	\not\subseteq \overline{W(T)} \cap \Omega \subseteq \overline{B_1(-1)} \setminus [-2,-1]. \hspace{-3mm}
		\end{equation}

		In fact, the claims for $W_\Psi(T)$ are obvious. If $\lambda \!\in\! W(T)$, then $\lambda\!\in\!\C\!\setminus\!(-\infty,-1]$ and there exists $x\!=\!(f,g)^{\rm t} \!\in\! \Hh \oplus \Hh$, $(f,g)^{\rm t}\ne (0,0)^{\rm t}$, with
		\[
			\big( T(\lambda) x,x \big) = \lambda \big( (Af,f) + (\ln |\lambda+1| + {\rm i} \arg (\lambda+1) ) (g,g)\big) = 0
			\hspace{-8mm}
		\]
		or, equivalently, noting that $\lambda \neq 0$ implies $g\neq 0$ as $0 \notin W(A)$,
		\[
		 	\lambda =0  \ \vee \ \Big( |\lambda\!+\!1| \!=\! \exp\Big(\!-\!\frac{\re(Af,f)}{(g,g)}\Big) \wedge\arg (\lambda\!+\!1) \!=\! - \frac{\im (Af,f)}{(g,g)} \Big).		
			\hspace{-12mm}													
		\]
		Hence, since  $\re W(A)>0$,
		\begin{align*}
		  	W(T) \setminus \{0\} \!  \subseteq 
		  	\! \big\{ z\!\in\! \C\setminus(-\infty,-1] \,:\, |z\!+\!1| \in (0,1)\big\} \subseteq 
		  	B_1(-1) \setminus (-2,-1].
			\hspace{-10mm}
		\end{align*}
		
		Moreover, for arbitrary $h\in\Hh$, $h\ne 0$, 
		\begin{equation}
			\left(T\left(\exp{\Big(\!-\!\frac{(Ah,h)}{(h,h)}\Big)} -1\right)\binom{h}{h},  \binom {h}{h} \right)=0.
		\end{equation}
		This shows that $\set{\exp(-z) - 1}{z \in  W(A)} \subseteq W(T)$ and since $\exp$ is entire and non-constant, $W(A)^\circ\neq\emptyset$ implies that $W(T)^\circ\neq\emptyset$ by the open mapping theorem for holomorphic functions. So in this case $W_\Psi(T)^\circ \ne W(T)^\circ$ and both are non-empty.
		\[
		  	W_\Psi(T)^\circ\!=\!\C\setminus(-\infty,-1], \quad  \emptyset \ne W(T)^\circ \subseteq 
		  	B_1(-1) \setminus (-2,-1]. 
		\]
	\end{enumerate}
\end{exple}

In the following, we generalise the spectral enclosure for bounded holomorphic families in Theorem \ref{thm:bdd.pseudo.num.ran} (ii) to holomorphic form families $\formt$ of type (a) and associated operator families of type (B), i.e.\ $\formt(\lambda)$ is sectorial with vertex $\gamma(\lambda)\!\in\!\R$, semi-angle $\vartheta(\lambda)\!\in\! [0,\frac \pi 2)$ and $\lambda$-independent domain $\dom \formt(\lambda)\!=\! \Dd_\formt$. Here, for $k \in \N_0$, we denote the $k$-th derivative of $\formt$ by
\begin{equation}
	\formt^{(k)}(\lambda)[f] \defeq (\formt(\cdot)[f])^{(k)}(\lambda), \quad f \in \dom \formt^{(k)}(\lambda) \defeq \Dd_\formt = \dom \formt(\lambda), \quad \lambda \in \Omega;
\vspace{-3mm}	
\end{equation}
note that $\formt^{(k)}(\lambda)$ need not be closable or sectorial if $k>0$.

\begin{thm}
\label{thm:pseudo.dense.hol.fam}
\label{thm:markus.B}
	Let $\formt$ be a holomorphic form family of type \textnormal{(a)} with associated holomorphic operator family $T$ of type \textnormal{(B)} in $\Hh$. If there exist $k\in\N_0$, $\mu\in\Omega$ and a core $\Dd$ of $\formt(\mu)$ \vspace{-2mm} with 
	\begin{equation}
	\label{eq:ass.pseudo.dense.hol.fam}
		0 \notin \overline{W\big(\formt^{(k)}(\mu)\big|_\Dd\big)},
	\vspace{-2mm}	
	\end{equation}
	then
	\begin{equation}
		\sigma(T) \subseteq W_\Psi(\formt)=\overline{W(\formt)} \cap \Omega .
	\end{equation}
	If, in addition, the operator family $T$ has constant domain, then
	\begin{equation}
		\sigma(T) \!\subseteq \, W_\Psi(T)=\overline{W(T)}\cap \Omega .
	\end{equation}
\end{thm}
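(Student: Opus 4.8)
The plan is to reduce the statement to the inclusion $W_\Psi(\formt)\subseteq\overline{W(\formt)}$ and its operator analogue, since everything else is already at hand. Indeed, Theorem~\ref{prop:hol.fam.incl.i} gives $\overline{W(\formt)}\cap\Omega\subseteq W_\Psi(\formt)$ and $\overline{W(T)}\cap\Omega\subseteq W_\Psi(T)$; since each $T(\lambda)$ is m-sectorial we have $\sigma(T(\lambda))\subseteq\overline{W(T(\lambda))}$, so Proposition~\ref{thm:spec.incl.pseudo.num.ran} gives $\sigma(T)\subseteq W_\Psi(T)$; and $W_\Psi(T)=W_\Psi(\formt)$ by Corollary~\ref{cor:pseudo.num}~(ii). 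Thus all the displayed equalities and inclusions follow once $W_\Psi(\formt)\subseteq\overline{W(\formt)}$ is known in general and $W_\Psi(T)\subseteq\overline{W(T)}$ under the constant-domain hypothesis (recall $W_\Psi(\formt)\subseteq\Omega$ and $W_\Psi(T)\subseteq\Omega$ automatically). A useful preliminary observation is that condition \eqref{eq:ass.pseudo.dense.hol.fam} does not depend on the chosen core: for a type~(a) family a Cauchy estimate turns \eqref{eq:cont.fam.form.ass} into a bound $\abs{\formt^{(k)}(\mu)[f]}\le C_k\big(\abs{\re\formt(\mu)[f]}+\norm{f}^2\big)$, $f\in\Dd_\formt$, so $\formt^{(k)}(\mu)$ is bounded in the form norm of $\formt(\mu)$, whence its numerical range over any form-norm-dense subspace of $\Dd_\formt$ has the same closure; in particular $d\defeq\dist\big(0,\overline{W(\formt^{(k)}(\mu)|_{\Dd_\formt})}\big)>0$.

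To prove $W_\Psi(\formt)\subseteq\overline{W(\formt)}$, let $\lambda_0\in W_\Psi(\formt)$. By \eqref{eq:pseudo.num.id} there are $f_m\in\Dd_\formt$ with $\norm{f_m}=1$ and $\formt(\lambda_0)[f_m]\to0$. The scalar functions $p_m(\lambda)\defeq\formt(\lambda)[f_m]$ are holomorphic on $\Omega$ by type~(a), their zeros lie in $W(\formt)$ since $f_m\ne0$, and $p_m^{(k)}(\mu)=\formt^{(k)}(\mu)[f_m]$ satisfies $\abs{p_m^{(k)}(\mu)}\ge d>0$ for every $m$. Assume, for contradiction, that $\lambda_0\notin\overline{W(\formt)}$ and fix $\rho>0$ with $B_\rho(\lambda_0)\subseteq\Omega$ and $B_\rho(\lambda_0)\cap W(\formt)=\emptyset$, so each $p_m$ is zero-free on $B_\rho(\lambda_0)$. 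Estimate \eqref{eq:cont.fam.form.ass} (which holds for type~(a) families, possibly with an enlarged constant, after the shift $\formt\mapsto\formt+c$ in the proof of Theorem~\ref{prop:hol.fam.incl.i}), applied with centre $\lambda_0$ and $f=f_m$, yields $\abs{p_m(\lambda)-p_m(\lambda_0)}\le w(\lambda)\big(\abs{\re p_m(\lambda_0)}+C\big)$ on a disc $B_r(\lambda_0)$; since $p_m(\lambda_0)\to0$, the sequence $\{p_m\}$ is uniformly bounded on $B_r(\lambda_0)$ for $m$ large. By Montel's theorem every subsequence has a locally uniformly convergent sub-subsequence; its limit is holomorphic, vanishes at $\lambda_0$, and, being a locally uniform limit of zero-free functions on $B_\rho(\lambda_0)$, vanishes identically by Hurwitz's theorem. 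Hence $p_m\to0$ locally uniformly on $B_r(\lambda_0)$.

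It remains to propagate this convergence from $\lambda_0$ to $\mu$, which I expect to be the main obstacle. Let $G$ be the set of points of $\Omega$ possessing a neighbourhood on which $p_m\to0$ locally uniformly; $G$ is open and $\lambda_0\in G$. To see that $G$ is closed in $\Omega$, let $\lambda_*\in\Omega$ lie in its closure, fix a compact neighbourhood $K\subseteq\Omega$ of $\lambda_*$ and a radius $r_K>0$ such that \eqref{eq:cont.fam.form.ass} holds, centred at any point of $K$, on the ball of radius $r_K$ about that point (contained in $\Omega$), with a common modulus and constant; here one uses that the radius in the type~(a) estimate may be chosen locally uniformly in $\Omega$ (clear from the construction of type~(a) families, \cite[\S\,VII.4]{Kato-1995}), and this is precisely where the unboundedness is accommodated. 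Picking $\lambda_1\in G\cap B_{r_K/2}(\lambda_*)$ one has $\re p_m(\lambda_1)\to0$, so the estimate centred at $\lambda_1$ shows $\{p_m\}$ uniformly bounded on $B_{r_K}(\lambda_1)\ni\lambda_*$; every locally uniform subsequential limit on $B_{r_K}(\lambda_1)$ vanishes on the nonempty open subset of $G$ around $\lambda_1$, hence vanishes identically, so $p_m\to0$ locally uniformly on $B_{r_K}(\lambda_1)$ and $\lambda_*\in G$. Since $\Omega$ is connected, $G=\Omega$; in particular $p_m^{(k)}(\mu)\to0$, contradicting $\abs{p_m^{(k)}(\mu)}\ge d$. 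Thus $\lambda_0\in\overline{W(\formt)}$, which gives $W_\Psi(\formt)=\overline{W(\formt)}\cap\Omega$.

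Finally, assume $\dom T(\lambda)=\Dd_T$ is independent of $\lambda$. Then $\Dd_T$ is a core of $\formt(\mu)$, so $0\notin\overline{W(\formt^{(k)}(\mu)|_{\Dd_T})}$ by the preliminary observation. Given $\lambda_0\in W_\Psi(T)=W_\Psi(\formt)$, Proposition~\ref{prop:pseudo.num} supplies $f_m\in\dom T(\lambda_0)=\Dd_T$ with $\norm{f_m}=1$ and $(T(\lambda_0)f_m,f_m)\to0$; the functions $q_m(\lambda)\defeq(T(\lambda)f_m,f_m)=\formt(\lambda)[f_m]$ are holomorphic on $\Omega$, their zeros lie in $W(T)$ because $f_m\in\dom T(\lambda)$ and $f_m\ne0$, and again $\abs{q_m^{(k)}(\mu)}=\abs{\formt^{(k)}(\mu)[f_m]}\ge d$. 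Running the argument of the previous two paragraphs verbatim with $q_m$ in place of $p_m$ and $W(T)$ in place of $W(\formt)$ yields $\lambda_0\in\overline{W(T)}$, hence $W_\Psi(T)=\overline{W(T)}\cap\Omega$. Together with the first paragraph, this completes the proof.
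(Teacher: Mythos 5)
Your proposal is correct, and its skeleton matches the paper's: you reduce everything to $W_\Psi(\formt)\subseteq\overline{W(\formt)}$ (and its operator analogue) via Theorem~\ref{prop:hol.fam.incl.i}, Corollary~\ref{cor:pseudo.num}~(ii) and Proposition~\ref{thm:spec.incl.pseudo.num.ran}; you show condition \eqref{eq:ass.pseudo.dense.hol.fam} is core-independent (the paper does this with the polarised bound \eqref{eq:Kato-VII.(4.7).deriv} and a diagonal sequence, you with a Cauchy estimate plus form-norm continuity -- same substance); and you study the holomorphic scalar functions $\lambda\mapsto\formt(\lambda)[f_m]$ with Montel, Hurwitz and the non-degeneracy $\abs{\formt^{(k)}(\mu)[f_m]}\ge d$. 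Where you genuinely diverge is the normal-families step. The paper invokes Kato's estimate \eqref{eq:Kato-VII.(4.7)} in its global form -- for every compact $K\subseteq\Omega$ one has $|(\formt+c)(\lambda)[f]|\le b_K|(\formt+c)(\lambda_0)[f]|$ with the \emph{fixed} centre $\lambda_0$ -- so that $\formt(\lambda_0)[f_m]\to0$ immediately gives uniform boundedness of $\{\varphi_m\}$ on all compacts of $\Omega$; Montel then applies on $\Omega$ at once, the limit $\varphi$ is nonzero because $\varphi^{(k)}(\mu)\ne0$, and Hurwitz directly produces zeros of the $\varphi_{n_j}$ (points of $W(\formt)$) accumulating at $\lambda_0$. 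You instead argue by contradiction: assuming $\lambda_0\notin\overline{W(\formt)}$ you use Hurwitz to force subsequential limits to vanish near $\lambda_0$, and then propagate the vanishing over $\Omega$ by an open-closed argument, re-centring the type~(a) difference estimate at varying points. This works, but the propagation is exactly the step that the global compact-set version of \eqref{eq:Kato-VII.(4.7)} makes unnecessary, and it is where your argument leans on the unproved (though true, cf.\ \cite[\S VII.4]{Kato-1995}) assertion that the radius and constants in the local estimate can be chosen uniformly on compacta; if you upgrade to the compact-set comparison against the single centre $\lambda_0$, your whole third paragraph collapses into one line and the contradiction becomes the paper's direct argument. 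Your treatment of the constant-domain case (zeros of $q_m$ lie in $W(T)$ because $f_m\in\Dd_T=\dom T(\lambda)$ for all $\lambda$) and of the final spectral inclusions agrees with the paper.
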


\begin{rem}
	\label{rem:suff.cond.pseudo.dense}
	\begin{enumerate}
		\item Since $\formt(\lambda)$ is densely defined, closed and sectorial for all $\lambda \!\in\! \Omega$, condition~\eqref{eq:ass.pseudo.dense.hol.fam} for $k=0$ has the two equivalent forms
		\[
 			0 \notin \overline{W\big(\formt(\mu)\big|_\Dd\big)} \ \iff \
 		  	0 \notin \overline{W(T(\mu))};
		\]
		hence, by Proposition \ref{prop:pseudo.num} a sufficient condition for \eqref{eq:ass.pseudo.dense.hol.fam} is 
		$$W_\Psi(T)\neq\Omega.$$
		\item For operator polynomials $T$, which are holomorphic and have constant domain by definition, see Proposition \ref{prop:poly.pseudo.num.op}, no sectoriality assumption is needed for the enclosure
		\begin{equation}
			\app (T) \subseteq W_\Psi (T) \subseteq \overline{W(T)} \cap \Omega.
		\end{equation}
		By Propositions \ref{prop:poly.pseudo.num.op} and \ref{thm:spec.incl.pseudo.num.ran}, the above holds under the mere assumption that $0 \notin \overline{W(A_n)}$ where $A_n$ is the leading coefficient of $T$; note that then \eqref{eq:ass.pseudo.dense.hol.fam} holds with $k=n$ and arbitrary $\mu \in \C$. This generalises the classical result \cite[Thm.\ 26.7]{Markus-1988} for bounded operator polynomials; see also \cite[Prop.\ 3.3]{Wagenhofer-PhD-2007} for the block numerical range.
		\item In general, neither the assumption on holomorphy nor condition \eqref{eq:ass.pseudo.dense.hol.fam} in Theorem \ref{thm:pseudo.dense.hol.fam} can be omitted, see Examples \ref{ex:deriv.cond} and \ref{ex:pseudo.num.spec.incl}.
	\end{enumerate}
\end{rem}

\begin{proof}[Proof of Theorem {\rm \ref{thm:pseudo.dense.hol.fam}}]
	First we show that if condition \eqref{eq:ass.pseudo.dense.hol.fam} holds for some core $\Dd$ of $\formt(\mu)$, it also holds for $\Dd$ replaced by $\Dd_\formt = \dom \formt (\lambda)$, $\lambda \in \Omega$. 
For $k\!=\!0$, this follows from the properties of a core, see \cite[Thm.\ VI.1.18]{Kato-1995}. For $k>0$, 	
	without loss of generality, 
	we may assume that $\re \formt (\mu) \ge 1$. From the proof of \cite[Eqn.\ VII.(4.7)]{Kato-1995}, it is easy to see that the second inequality therein holds for $\formt^{(k)}$, i.e.~there exists a constant $C_\mu >0$ such that
	\begin{equation}
	\label{eq:Kato-VII.(4.7).deriv}
		\big|\formt^{(k)}(\mu)[f,g]\big| \le C_\mu \abs{\formt(\mu) [f]}^\frac12\abs{\formt(\mu) [g]}^\frac12, \quad f,g \in\Dd_\formt.
	\end{equation}
	To prove the claim stated at the beginning assume, to the contrary, that $0 \in \overline{W(\formt^{(k)}(\mu))}$, i.e.\ that there exists a sequence $\{f_n\}_n \subseteq \Dd_\formt$, $\norm{f_n}=1$, $n\in\N$, such that $\formt^{(k)}(\mu)[f_n] \!\to\! 0$ as $n\!\to\!\infty$. By the core property of $\Dd$ for $\formt[\mu]$ and by \cite[Thm.\ VI.1.12]{Kato-1995}, for fixed $n\in\N$, there exists $\{f_{n,m}\}_m\subseteq\Dd$~with
	\begin{equation}
	\label{eq:core.sequ}
	  \hspace{2mm} 
		f_{n,m} \!\to\! f_n, \quad \formt(\mu)[f_{n,m}\!-\!f_n]\!\to\! 0, \quad \formt(\mu)[f_{n,m}] \!\to\! \formt(\mu)[f_n], \quad m\!\to\! \infty. 
	\end{equation}%
	Applying \eqref{eq:Kato-VII.(4.7).deriv}, we can estimate
	\begin{equation}
		\begin{aligned}
			\big|\formt^{\!(k)}(\mu) [f_{n,m}] \!-\! \formt^{\!(k)}(\mu) [f_n] \big|
			& \!\le\! \big|\formt^{\!(k)}(\mu) [f_{n,m}, f_{n,m}\!-\!f_n]\big| \!+\! \big|\formt^{\!(k)}(\mu) [f_n \!-\! f_{n,m}, f_n]\big|\\
			& \!\le\! C_\mu \abs{\formt (\mu) [f_{n,m} \!-\! f_n]}^\frac12\!\big(\abs{\formt(\mu) [f_{n,m}]}^\frac12 \!\!+\! \abs{\formt(\mu) [f_n]}^\frac12\!\big).
		\end{aligned}
	\end{equation}
	Since $\norm{f_n}\!=\!1$, $n\!\in\!\N$, it follows from \eqref{eq:core.sequ} and the above inequality  
	that there exists $m_n\ge n$ such\vspace{-1mm} that
	\begin{equation}
	\label{eq:diag.sequ.core}
		\norm{f_{n,m_n}}\ge\frac 12, \quad \abs{\formt^{(k)}(\mu)[f_{n,m_n}]}<\abs{\formt^{(k)}(\mu)[f_n]}+\frac{1}{n}.
	\vspace{-1mm}
	\end{equation}
	In view of $\formt^{(k)}(\mu) [f_n] \to 0$, $n \to \infty$, this implies the required \vspace{-1mm}claim 
	\begin{equation}
		0 \in\overline{W\big(\formt^{(k)}(\mu) \big|_\Dd\big)}.
	\vspace{-1mm}	
	\end{equation}
	This completes the proof that \eqref{eq:ass.pseudo.dense.hol.fam} holds with $\Dd_\formt$ instead of $\Dd$.
	
	By Corollary \ref{cor:pseudo.num} (ii), we have $W_\Psi(\formt)=W_\Psi(T)\subseteq\Omega$. Thus, due to \eqref{eq:sect-nr-psnr}, for the claimed equalities between pseudo numerical and numerical ranges it is sufficient to show 
	$W_\Psi(\formt)\subseteq\overline{W(\formt)}$ and $W_\Psi(\formt)\subseteq\overline{W(T)}$, respectively.
	
	Let $\lambda_0\in W_\Psi(\formt)=W_\Psi(T)$. Then $0\in\overline{W(T(\lambda_0))}$ by Proposition \ref{prop:pseudo.num} and hence there exists $\{f_n\}_n\!\subseteq\!\dom T(\lambda_0)\!\subseteq\!\Dd_\formt$ with $\norm{f_n}\!=\!1$, $n\in\N$, such that
	\begin{equation}
		\label{eq:form-to-0}
		\scalarprod{T(\lambda_0)f_n}{f_n}=\formt(\lambda_0)[f_n]\to 0, \quad n\to\infty.
	\end{equation}
	Define a sequence of holomorphic \vspace{-1mm} functions
	\begin{equation}
		\label{eq:diag.sequ.core.def}
		\varphi_n(\lambda)\defeq \formt(\lambda) [f_n], 
		\quad \lambda\in\Omega, \quad n\in\N.
	\vspace{-1mm}	
	\end{equation}
	Let $K\subseteq\Omega$ be an arbitrary compact subset and let $c>0$ be such that $\re (\formt+c) (\lambda_0) \ge 1$. By \cite[Eqn.\ VII.(4.7)]{Kato-1995}, there exists $b_K>0$ with
	\begin{equation}
		\label{eq:Kato-VII.(4.7)}
		|(\formt+c)(\lambda)[f]|\le b_K |(\formt+c)(\lambda_0)[f]|, \quad  \lambda \in K, \ f \in \Dd_\formt.
	\end{equation}
	Using this, $\norm{f_n}=1$ and \eqref{eq:form-to-0}, we find that, for all $\lambda \in K$,
	\begin{equation}
		\abs{\varphi_n(\lambda)}\le b_{K} \abs{(\formt+c)(\lambda_0)[f_n]} +c\le b_{K} \sup_{n\in\N} \abs{\formt(\lambda_0)[f_n]} +(b_K+1)c<\infty.
	\vspace{-2mm}
	\end{equation}
	Consequently, $\{\varphi_n\}_n$ is uniformly bounded on compact subsets of $\Omega$. By Mon\-tel's Theorem, see e.g.\ \cite[\S VII.2]{Conway-1978}, there exists a subsequence $\{\varphi_{n_j}\}_j\subseteq\{\varphi_n\}_n$ that converges locally uniformly to a holomorphic function $\varphi$. Now assumption \eqref{eq:ass.pseudo.dense.hol.fam} with $\Dd_\formt$, which we proved to hold in the first \vspace{-1mm} step, implies
	\begin{equation}
		\varphi^{(k)}(\mu)=\frac{\d^k}{\d\!\lambda^k}\lim_{j\to\infty}\varphi_{n_j}(\lambda)\bigg|_{\lambda=\mu}=\lim_{j\to\infty}\varphi_{n_j}^{(k)}(\mu) = \lim_{j\to\infty} \formt^{(k)}(\mu) [f_{n_j}]
		\neq0
	\vspace{-1mm}	
	\end{equation}
	and thus $\varphi\not\equiv0$. By \eqref{eq:form-to-0}, we further conclude that $\varphi(\lambda_0)=0$. Then, by Hurwitz' Theorem, see e.g.\ \cite[\S VII.2]{Conway-1978}, there exists a sequence $\{\lambda_j\}_j\subseteq\Omega$ with $\lambda_j\to\lambda_0$ for $j\to\infty$ \vspace{-1mm} and
	\begin{equation}
		0=\varphi_{n_j}(\lambda_j)=\formt(\lambda_j)[f_{n_j}], \quad j\in\N.
	\vspace{-1mm}	
	\end{equation}
	Hence, $\lambda_j\in W(\formt)$ for all $j\in\N$ and so $\lambda_0\in\overline{W(\formt)}\cap \Omega $, as required.
	
	Now assume that the operator family $T$ has constant domain. Then, in the above construction, we have $f_{n_j} \in\dom T(\lambda_0) = \dom T(\lambda_j)$ for every $j\in\N$. It follows that $\lambda_j\in W(T)$, $j\in\N$, and thus $\lambda_0\in\overline{W(T)}\cap \Omega $.
	
	The enclosures of the spectrum follow from Proposition \ref{thm:spec.incl.pseudo.num.ran} and from the fact that 
	$\sigma(T(\lambda)) \subseteq \overline{W(T(\lambda))}$ since $T(\lambda)$ is m-sectorial for all $\lambda\in\Omega$. 
\end{proof}

As forms are the natural objects regarding numerical ranges, it is not surprising that the inclusion $W_\Psi(T)\subseteq\overline{W(T)}\cap \Omega $ in Theorem \ref{thm:pseudo.dense.hol.fam} might cease to hold for more general analytic operator families where the connection to a family of forms is lost. Nevertheless, using an analogous idea as in the proof of Theorem \ref{thm:pseudo.dense.hol.fam}, one can prove the corresponding inclusion for the approximate spectrum.

Recall that an operator family $T$ in $\Hh$ is called holomorphic of type (A) if it consists of closed operators with constant domain and for each $f\in\Dd_T\defeq\dom T(\lambda)$, the mapping $\lambda\mapsto T(\lambda)f$ is holomorphic on $\Omega$. Here, for $k \in\N_0$, the $k$-th derivative of $T$ is defined as
\begin{equation}
	T^{(k)}(\lambda)f \defeq (T(\cdot) f)^{(k)}(\lambda), \quad f \in\dom T^{(k)} (\lambda) \defeq \Dd_T, \quad \lambda\in\Omega.
\end{equation}

\begin{thm}
\label{thm:markus.A}
	Let $T$ be a holomorphic family of type \textnormal{(A)} in $\Hh$. If there exist $k\in\N_0$, $\mu\in\Omega$  and a core $\Dd$ of $T(\mu)$ 
	\vspace{-1mm}
	with
	\begin{equation}
	\label{eq:ass.markus.A}
		0 \notin \overline{W\big(T^{(k)}(\mu)\big|_\Dd\big)},
	\vspace{-1mm}
	\end{equation}
	\vspace{-1mm}then
	\begin{equation}
		\app(T)\subseteq\overline{W(T)} \cap \Omega .
	\end{equation}
\end{thm}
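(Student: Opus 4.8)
The plan is to mimic the structure of the proof of Theorem \ref{thm:pseudo.dense.hol.fam}, replacing the form-level estimates coming from Kato's type (B) machinery by direct operator estimates available for a type (A) family. The key point is that we no longer need the inclusion $W_\Psi(T) \subseteq \overline{W(T)} \cap \Omega$ with the exact numerical range; instead we only want $\app(T) \subseteq \overline{W(T)} \cap \Omega$, and for a member of $\app(T)$ we have genuine approximate eigenvectors $f_n$, which are much stronger test vectors than the almost-zero-numerical-range vectors available for $W_\Psi(T)$.

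First I would reduce the hypothesis \eqref{eq:ass.markus.A} from the core $\Dd$ to the full domain $\Dd_T$: assuming to the contrary that $0 \in \overline{W(T^{(k)}(\mu)|_{\Dd_T})}$, take $\{f_n\}_n \subseteq \Dd_T$ with $\norm{f_n} = 1$ and $T^{(k)}(\mu)f_n \to 0$, then approximate each $f_n$ by elements $f_{n,m} \in \Dd$ with $f_{n,m} \to f_n$ and $T(\mu)f_{n,m} \to T(\mu)f_n$ (core property, since $T(\mu)$ is closed); here, since $T^{(k)}(\mu) = \frac{\d^k}{\d\lambda^k}T(\lambda)|_{\lambda=\mu}$ is a \emph{closed} operator with domain $\Dd_T$ (being a finite difference-quotient-type limit, or more directly because the Cauchy integral formula expresses $T^{(k)}(\mu)$ in terms of the resolvents $(T(\lambda)-\cdot)^{-1}$ which are bounded on a circle around $\mu$), it too is closable/closed on $\Dd_T$, so a diagonal subsequence argument yields $g_n := f_{n,m_n} \in \Dd$ with $\norm{g_n} \to 1$ and $T^{(k)}(\mu)g_n \to 0$, contradicting \eqref{eq:ass.markus.A}. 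Thus we may and do assume $0 \notin \overline{W(T^{(k)}(\mu)|_{\Dd_T})}$.

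Next, fix $\lambda_0 \in \app(T)$ with approximate eigenvectors $\{f_n\}_n \subseteq \dom T(\lambda_0) = \Dd_T$, $\norm{f_n} = 1$, $T(\lambda_0)f_n \to 0$. Define the scalar holomorphic functions $\varphi_n(\lambda) \defeq (T(\lambda)f_n, f_n)$, which are well-defined and holomorphic on $\Omega$ since $T$ is of type (A). To apply Montel's theorem I would establish local uniform boundedness of $\{\varphi_n\}_n$: for a compact $K \subseteq \Omega$ and $\lambda \in K$, expand $T(\lambda)f_n$ around $\lambda_0$ via the Cauchy integral formula, or more simply use that $\lambda \mapsto T(\lambda)f_n$ is holomorphic and hence, on a path from $\lambda_0$ to $\lambda$ staying in a fixed compact neighbourhood, $\norm{T(\lambda)f_n} \le \norm{T(\lambda_0)f_n} + C_K \sup_{j} \norm{T^{(j)}(\lambda_0)f_n}$ for a few low-order derivatives — but this requires controlling $\norm{T^{(j)}(\lambda_0)f_n}$, which is where I expect the main difficulty to lie. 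The clean workaround is to use the Cauchy integral representation on a fixed circle $\Gamma$ around $K$ inside $\Omega$: $\varphi_n(\lambda) = \frac{1}{2\pi \i}\oint_\Gamma \frac{(T(\zeta)f_n, f_n)}{\zeta - \lambda}\,\d\zeta$, reducing everything to bounding $|(T(\zeta)f_n, f_n)| \le \norm{T(\zeta)f_n}$ for $\zeta$ on the compact set $\Gamma$; and $\sup_n \norm{T(\zeta)f_n} < \infty$ follows because $\zeta \mapsto T(\zeta)f_n$ being holomorphic with $T(\lambda_0)f_n$ bounded forces, via closed-graph / uniform boundedness applied to the family $\{(T(\zeta) - T(\lambda_0))(T(\lambda_0)-\eta)^{-1}\}$ or simply via a resolvent identity at a single point $\eta \in \rho(T(\lambda_0))$ (which exists since $T(\lambda_0)$ is closed and $\lambda_0 \in \app(T)$ means $0 \in \sigma_{\mathrm{ap}}(T(\lambda_0))$ but not necessarily all of $\C$ in the spectrum — one can choose $\eta$ in the resolvent set if $\sigma(T(\lambda_0)) \ne \C$; if $\sigma(T(\lambda_0)) = \C$ for all $\lambda_0$ near then $\overline{W(T)} \cap \Omega$ issues are moot), the needed uniform bound.

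Finally, once $\{\varphi_n\}_n$ is locally uniformly bounded, Montel's theorem gives a locally uniformly convergent subsequence $\varphi_{n_j} \to \varphi$ holomorphic on $\Omega$; the reduced hypothesis \eqref{eq:ass.markus.A} with $\Dd_T$ yields $\varphi^{(k)}(\mu) = \lim_j \varphi_{n_j}^{(k)}(\mu) = \lim_j (T^{(k)}(\mu)f_{n_j}, f_{n_j}) \ne 0$ (using that $0 \notin \overline{W(T^{(k)}(\mu)|_{\Dd_T})}$ forces this inner product to stay bounded away from $0$), so $\varphi \not\equiv 0$, while $\varphi(\lambda_0) = \lim_j (T(\lambda_0)f_{n_j}, f_{n_j}) = 0$ since $T(\lambda_0)f_{n_j} \to 0$. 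By Hurwitz' theorem there is $\lambda_j \to \lambda_0$ with $\varphi_{n_j}(\lambda_j) = (T(\lambda_j)f_{n_j}, f_{n_j}) = 0$, hence $\lambda_j \in W(T)$, hence $\lambda_0 \in \overline{W(T)} \cap \Omega$, completing the proof. The main obstacle, as indicated, is the local uniform boundedness of the $\varphi_n$: unlike the type (B) case, there is no ready-made Kato estimate, so one must exploit holomorphy of $\lambda \mapsto T(\lambda)f_n$ together with a single resolvent bound, transported via the Cauchy integral formula, to get the required uniformity in $n$.
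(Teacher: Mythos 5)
Your overall architecture is exactly the paper's: reduce \eqref{eq:ass.markus.A} from the core $\Dd$ to $\Dd_T$, form the scalar holomorphic functions $\varphi_n(\lambda)=(T(\lambda)f_n,f_n)$ for an approximate eigensequence at $\lambda_0$, get local uniform boundedness, and finish with Montel and Hurwitz. The Montel/Hurwitz endgame and the use of \eqref{eq:ass.markus.A} to force $\varphi\not\equiv 0$ are correct. However, the step you yourself flag as the main difficulty — uniform boundedness of $\{\varphi_n\}_n$ on compacta — is genuinely not closed by either of your two workarounds. The missing ingredient is the relative-boundedness estimate for type (A) families, Kato's Eqn.\ VII.(2.2) (and its analogue VII.(2.3) for the derivatives): for $\lambda$ ranging over a compact $K\subseteq\Omega$ there are $a_K,b_K$ with $\norm{T(\lambda)u}\le a_K\norm{u}+b_K\norm{T(\lambda_0)u}$ for $u\in\Dd_T$. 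This is derived from the closed graph theorem on the Banach space $(\Dd_T,\norm{\cdot}_{T(\lambda_0)})$ and requires \emph{no} resolvent point of $T(\lambda_0)$. Applied to $u=f_n$ with $T(\lambda_0)f_n\to 0$ it immediately gives $\sup_n\sup_{\lambda\in K}\abs{\varphi_n(\lambda)}<\infty$, which is what the paper does. Your first workaround (controlling $\norm{T^{(j)}(\lambda_0)f_n}$ along a path) presupposes exactly this estimate; your second (Cauchy integral on $\Gamma$ plus a resolvent identity at some $\eta\in\rho(T(\lambda_0))$) requires $\rho(T(\lambda_0))\neq\emptyset$, which a closed operator need not satisfy, and the fallback ``if $\sigma(T(\lambda_0))=\C$ then the issue is moot'' is not an argument — the conclusion $\lambda_0\in\overline{W(T)}$ still has to be proved in that case.

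The same gap reappears, in milder form, in your reduction from $\Dd$ to $\Dd_T$: what you need there is not closedness of $T^{(k)}(\mu)$ (which is neither obvious nor the relevant property) but its $T(\mu)$-boundedness, i.e.\ $\norm{T^{(k)}(\mu)u}\le C(\norm{u}+\norm{T(\mu)u})$, so that $f_{n,m}\to f_n$ and $T(\mu)f_{n,m}\to T(\mu)f_n$ imply $T^{(k)}(\mu)f_{n,m}\to T^{(k)}(\mu)f_n$ and the diagonal argument goes through. This is again Kato VII.(2.3) (obtainable from the Cauchy integral formula for $T^{(k)}(\mu)f$ combined with VII.(2.2), as you hint, but not from closedness). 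Once you import these two estimates, your proof coincides with the paper's.
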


\begin{proof}
	In the same way as in the proof of Theorem \ref{thm:markus.B}, using the analogue~of \cite[Eqn.\,VII.(2.3)]{Kato-1995} for the $k$-th derivative of $T$ and Cauchy-Schwarz' in\-equa\-lity, one shows that \eqref{eq:ass.markus.A} holds with $\Dd_T \!=\! \dom T(\lambda)$, $\lambda \!\in\!\Omega$, instead of~$\Dd$.
	
	We proceed similarly as in the proof of Theorem \ref{thm:pseudo.dense.hol.fam}. Let $\lambda_0\in\app(T)$. There exists a sequence $\{f_n\}_n\subseteq\Dd_T$ with $\norm{f_n}=1$, $n\in\N$, and $T(\lambda_0)f_n\to0$ as $n\to\infty$.	Define a sequence of holomorphic functions
	\begin{equation}
		\varphi_n(\lambda)\defeq \scalarprod{T(\lambda)f_n}{f_n}, \quad \lambda\in\Omega, \quad n\in\N.
	\end{equation}
	Analogously to the proof of Theorem \ref{thm:pseudo.dense.hol.fam}, one uses Cauchy-Schwarz' inequality, equation \cite[Eqn.\ VII.(2.2)]{Kato-1995}, $\lim_{n\to\infty}T(\lambda_0)f_n=0$ and \eqref{eq:ass.markus.A} with $\Dd_T$ in order to show uniform boundedness of $\{\varphi_n\}_n$ on compacta, extract a locally uniformly converging subsequence with limit $\varphi\not\equiv0$ and infer $\varphi(\lambda_0)=0$. One then obtains $\lambda_0\in\overline{W(T)}\cap \Omega $ in the same way as in Theorem \ref{thm:pseudo.dense.hol.fam}.
\end{proof}

\begin{rem}
	Theorems \ref{thm:markus.B} and \ref{thm:markus.A} generalise the classical result \cite[Thm.~III. 26.6]{Markus-1988} for bounded holomorphic families (which follows from Theorem \ref{thm:bdd.pseudo.num.ran}~(ii)).
\end{rem}

Like for the numerical range of unbounded operators, cf.\ \cite[Sct.\ V.3.2]{Kato-1995}, additional conditions are needed for enclosing not only the approximate point spectrum, but the entire spectrum $\sigma(T)$ in $W_\Psi (T)$. 

\begin{rem}
\label{rem:spec.incl.pseudo.num.ran}
	Let $T$ be a family of closed operators in $\Hh$ and let $T$ be continuous in the generalised sense. If $\app(T)\subseteq\Theta\subseteq\Omega$ and all connected components of $\Omega\setminus\Theta$ contain a point in the resolvent set of $T$, then $\sigma(T)\subseteq\Theta$. In particular, if all connected components of $\Omega\setminus W_\Psi(T)$ have non-empty intersection with $\rho(T)$, \vspace{-2mm} then
	\begin{equation}
		\sigma(T)\subseteq W_\Psi(T).
	\end{equation}
	This follows from the fact that the index of $T(\lambda)$ is locally constant on the set of regular points, see \cite[Thm.\ IV.5.17]{Kato-1995}.
\end{rem}

\section{Pseu\-do block numerical ranges of operator matrix functions and spectral enclosures}
\label{sec:op.mat.fam}
\label{subsec:qnr}
\label{subsec:spec.pseudo.qnr}

In this section we introduce the pseudo block numerical range of $n\times n$ operator matrix functions for which the entries may have unbounded operator values. While we study its basic properties for $n\ge 2$, we study the most important case $n=2$ in greater detail.

We suppose that with respect to a fixed decomposition $\Hh=\Hh_1\oplus\cdots \oplus\Hh_n$ with $n\in\N$, a family $\Ll=\set{\Ll(\lambda)}{\lambda\in\Omega}$ of densely defined linear operators in $\Hh$ admits a matrix representation
\begin{equation}
\label{eq:op.matrix.fam}
	\Ll(\lambda)=\left( L_{ij} (\lambda) \right)_{i,j=1}^n :\Hh\supseteq\dom\Ll(\lambda)\to\Hh;
\end{equation}
here $L_{ij}$ are families of densely defined and closable linear operators from $\Hh_j$ to $\Hh_i$, $i$, $j=1,\dots, n$, and  
\vspace{-1.5mm} $\dom\Ll(\lambda)=\Dd_1(\lambda)\oplus\cdots \oplus\Dd_n(\lambda)$, 
\begin{equation}
	\Dd_j(\lambda)\defeq \bigcap_{i=1}^n \dom L_{ij}(\lambda), \quad j=1,\dots,n.
\vspace{-1.5mm}	
\end{equation}

The following definition generalises, and unites, several earlier concepts: the block numerical range of $n\times n$ operator matrix families whose entries have bounded linear operator values, see \cite{MR3302436}, the block numerical range of unbounded $n \times n$ operator matrices, see \cite{Rasulov-Tretter-2018}, and in the special case $n\!=\!2$, the quadratic numerical range for bounded analytic operator matrix families and unbounded operator matrices, see \cite{Tretter-2010} and \cite{Langer-Tretter-1998}, \cite{Tretter-2009}, respectively. Further, we introduce the new concept of pseudo block numerical range. 

\begin{defi}
\label{def:quad.num.ran}
	\begin{enumerate}
		\item We define the \emph{block numerical range} of $\Ll$ (with respect to the decomposition $\Hh=\Hh_1\oplus\cdots \oplus\Hh_n$) as
		\begin{equation}
			W^{n}(\Ll)\defeq 
			 \{\lambda\in\Omega: \exists\, f\in \!\dom\Ll(\lambda)\cap {\mathcal S}^n \ 0 \!\in\! \sigma(\Ll(\lambda)_f)\}
		\end{equation}
		where ${\mathcal S}^n\defeq \{ f\!=\!(f_i)_{i=1}^n \!\in\! \Hh	: \norm{f_i}\!=\!1, i\!=\!1,\dots,n\}$ and,
		for $f\!=\!(f_i)_{i=1}^n\!\in\!\dom\Ll(\lambda)\cap {\mathcal S}^n$ with $\lambda \!\in\! \Omega$,
		\begin{equation}
			\Ll(\lambda)_{f}\defeq\left( \Ll_{ij}(\lambda) f_j, f_i \right)\in\C^{n\times n}.
		\end{equation}
		\item We introduce the \emph{pseu\-do block numerical range} of $\Ll$ as
		\begin{equation}
			W^n_\Psi(\Ll)\defeq\bigcap_{\varepsilon>0}W_\varepsilon^n(\Ll), \qquad 
			W_\varepsilon^n(\Ll)\defeq\hspace{-2mm} \bigcup_{\Bb\in L(\Hh),	\norm{\Bb}<\varepsilon}\hspace{-2mm} W^n(\Ll+\Bb), \quad \varepsilon>0.
		\end{equation}
	\end{enumerate}
\end{defi}

Note that, indeed, if $\Ll(\lambda)\!=\!\Aa\!-\!\lambda I_\Hh$, $\lambda \!\in\! \C$, with an (unbounded) operator matrix $\Aa$ in $\Hh$, then 
$\dom \Ll(\lambda)\!=\!\dom \Aa$ is constant for $\lambda\!\in\!\C$ and 
$W^n(\Ll)$ coincides with the block numerical range $W^n(\Aa)$ first introduced in \cite{Rasulov-Tretter-2018} and, for $n\!=\!2$, in \cite{Tretter-2009}. While the pseudo numerical range also satisfies $W_\Psi(\Ll)\!=\!\overline{W(\Ll)} = \overline{W(\Aa)}$ this is no longer true for the pseudo block numerical range when $n>1$; in fact, Example \ref{ex:jordan} below shows that $W_\Psi^2(\Ll)\neq \overline{W^2(\Ll)} = \overline{W^{2}(\Aa)}$ is possible.

\begin{rem}
\label{rem:4.2}
	It is not difficult to see that, for the block numerical range and the pseudo block numerical range of general operator matrix \vspace{-1mm} families,
	\begin{equation}
	\label{eq:qnr.equiv}
		\lambda\in W^n(\Ll) \iff 0\in W^n(\Ll(\lambda))
	\vspace{-1mm}	
	\end{equation}
	and $ W^n(\Ll)\!\subseteq\! W^n_\Psi(\Ll)$. If $\dom \Ll(\lambda)\!=:\!\Dd_\Ll$, $\lambda\!\in\!\Omega$, is constant, we can also~\vspace{-1mm}write
	\[
	W^{n}(\Ll)\defeq 
			\bigcup_{f\in\Dd_\Ll\cap{\mathcal S}^n} \sigma \big( \Ll_{f} \big ).
	\vspace{-2mm}		
	\]
\end{rem}

There are several other possible ways to define the pseudo block numerical range. In the following we show that, in general, they inevitably fail to contain the approximate point spectrum of an operator matrix family. 

\begin{defi} 
\label{eq:alt.def.psi0}
	Define 
	\begin{equation}
		W_{\Psi,0}^{n}(\Ll)\!\defeq\!\set{\lambda\!\in\!\Omega}{0\in\overline{W^{n}(\Ll(\lambda))}}, \quad 
		W_{\Psi,i}^{n}(\Ll)\!\defeq\!\bigcap_{\varepsilon>0}W_{\varepsilon,i}^{n}(\Ll), \ i\!=\!1,2,
	\vspace{-2mm}	
	\end{equation}
	where, for $\varepsilon>0$,
	\begin{equation}
	\begin{aligned}
		W_{\varepsilon,1}^{n}(\Ll) & \!\defeq\! \set{\lambda\in\Omega}{\exists\, f\in\dom\Ll(\lambda) \cap {\mathcal S}^n, \abs{\det(\Ll(\lambda)_{f})}<\varepsilon}\!, \\
		W_{\varepsilon,2}^{n}(\Ll) & \!\defeq\! \!\!\bigcup_{B_i\in L(\Hh_i),\norm{B_i}<\varepsilon} \!\!W^{n}\big(\Ll+\diag(B_1,\dots,B_n)\big).
	\end{aligned}
	\end{equation}
\end{defi}

While for the pseudo numerical range, analogous concepts as in Definition~\ref{eq:alt.def.psi0} coincide by Proposition \ref{prop:pseudo.num}, this is not true for the pseudo block numerical range. Here, in general, we only have the following inclusions.
	
\begin{prop}
\label{prop:nested.def.pseudo.qnr}
	The pseudo block numerical range $W^{n}_\Psi(\Ll)$ satisfies
	\begin{equation}
	\label{eq:pbnri}
		W^n(\Ll) \subseteq W^{n}_{\Psi,1}(\Ll)\subseteq  W_{\Psi,0}^{n}(\Ll)\subseteq W^{n}_{\Psi,2}(\Ll)\subseteq W^{n}_\Psi(\Ll).
	\end{equation}
\end{prop}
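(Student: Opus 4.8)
The claim is the chain of four inclusions in \eqref{eq:pbnri}. I would prove each inclusion separately, working from left to right, and the key reduction throughout is the characterisation \eqref{eq:qnr.equiv}, i.e.\ $\lambda\in W^n(\Mm) \iff 0\in W^n(\Mm(\lambda))$, applied to $\Mm=\Ll$ and to the various perturbations $\Ll+\Bb$. Since all five sets are defined fibrewise through conditions on the scalar matrices $\Ll(\lambda)_f\in\C^{n\times n}$, it essentially suffices to fix $\lambda$ and prove the corresponding chain of inclusions of subsets of $\{\lambda\}$, i.e.\ to compare the fibrewise conditions; the $\varepsilon$-intersections then pass through because each step is uniform in $\varepsilon$.

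\medskip
\noindent\textbf{First inclusion $W^n(\Ll)\subseteq W^n_{\Psi,1}(\Ll)$.} If $\lambda\in W^n(\Ll)$, there is $f\in\dom\Ll(\lambda)\cap\Ss^n$ with $0\in\sigma(\Ll(\lambda)_f)$, hence $\det(\Ll(\lambda)_f)=0<\varepsilon$ for every $\varepsilon>0$; so $\lambda\in W^n_{\varepsilon,1}(\Ll)$ for all $\varepsilon>0$, i.e.\ $\lambda\in W^n_{\Psi,1}(\Ll)$.

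\medskip
\noindent\textbf{Second inclusion $W^n_{\Psi,1}(\Ll)\subseteq W^n_{\Psi,0}(\Ll)$.} Fix $\lambda\in W^n_{\Psi,1}(\Ll)$. For each $\varepsilon>0$ pick $f_\varepsilon\in\dom\Ll(\lambda)\cap\Ss^n$ with $|\det(\Ll(\lambda)_{f_\varepsilon})|<\varepsilon$. Since $\det(\Ll(\lambda)_{f_\varepsilon})=\prod_{k=1}^n\mu_k(\varepsilon)$ where $\mu_1(\varepsilon),\dots,\mu_n(\varepsilon)$ are the eigenvalues of $\Ll(\lambda)_{f_\varepsilon}$, and each $\mu_k(\varepsilon)\in W^n(\Ll(\lambda))$, the smallness of the product forces $\min_k|\mu_k(\varepsilon)|\to 0$ as $\varepsilon\to 0$ (the entries of $\Ll(\lambda)_{f_\varepsilon}$ are bounded by $1$ by Cauchy--Schwarz, so the eigenvalues stay in a fixed disc; if all were bounded away from $0$ the determinant could not be small). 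Extracting a subsequence along which one eigenvalue tends to $0$ yields $0\in\overline{W^n(\Ll(\lambda))}$, i.e.\ $\lambda\in W^n_{\Psi,0}(\Ll)$.

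\medskip
\noindent\textbf{Third inclusion $W^n_{\Psi,0}(\Ll)\subseteq W^n_{\Psi,2}(\Ll)$.} Fix $\lambda\in W^n_{\Psi,0}(\Ll)$, so there are $f_m\in\dom\Ll(\lambda)\cap\Ss^n$ and $z_m\in W^n(\Ll(\lambda)_{f_m})$ with $z_m\to 0$, i.e.\ $\det\bigl(\Ll(\lambda)_{f_m}-z_m I_n\bigr)=0$. Given $\varepsilon>0$ choose $m$ with $|z_m|<\varepsilon$ and set $B_i\defeq -z_m I_{\Hh_i}$; then $\diag(B_1,\dots,B_n)$ has norm $|z_m|<\varepsilon$, and $(\Ll+\diag(B_i))(\lambda)_{f_m}=\Ll(\lambda)_{f_m}-z_m I_n$ is singular, so $0\in\sigma\bigl((\Ll+\diag(B_i))(\lambda)_{f_m}\bigr)$, i.e.\ $\lambda\in W^n(\Ll+\diag(B_i))\subseteq W^n_{\varepsilon,2}(\Ll)$. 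As $\varepsilon>0$ was arbitrary, $\lambda\in W^n_{\Psi,2}(\Ll)$.

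\medskip
\noindent\textbf{Fourth inclusion $W^n_{\Psi,2}(\Ll)\subseteq W^n_\Psi(\Ll)$.} This is immediate since every diagonal $\Bb=\diag(B_1,\dots,B_n)\in L(\Hh)$ with $\|\Bb\|<\varepsilon$ is in particular a bounded operator on $\Hh$ with $\|\Bb\|<\varepsilon$, so $W^n_{\varepsilon,2}(\Ll)\subseteq W^n_\varepsilon(\Ll)$ for every $\varepsilon>0$, whence $W^n_{\Psi,2}(\Ll)\subseteq W^n_\Psi(\Ll)$.

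\medskip
\noindent The only step that requires genuine care, rather than bookkeeping, is the second inclusion: one must argue quantitatively that a small product of the eigenvalues of $\Ll(\lambda)_{f_\varepsilon}$ forces one eigenvalue to be small, using that the matrices $\Ll(\lambda)_{f_\varepsilon}$ are uniformly bounded (entries of modulus $\le 1$). Everything else is a direct unwinding of the definitions via \eqref{eq:qnr.equiv}.
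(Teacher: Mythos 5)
Your proof is correct and follows essentially the same route as the paper: the outer inclusions are definitional, the step from $W^n_{\Psi,1}$ to $W^n_{\Psi,0}$ uses that $|\det \Ll(\lambda)_f|<\varepsilon$ forces the smallest eigenvalue of $\Ll(\lambda)_f$ (a point of $W^n(\Ll(\lambda))$) to have modulus $<\varepsilon^{1/n}$, and the step from $W^n_{\Psi,0}$ to $W^n_{\Psi,2}$ uses the scalar diagonal perturbation $-z_m I_\Hh$ exactly as the paper does. The only cosmetic remarks: the boundedness of the entries of $\Ll(\lambda)_{f_\varepsilon}$ is not actually needed for the determinant argument, since $(\min_k|\mu_k|)^n\le\prod_k|\mu_k|$ already gives the bound, and in the third step $z_m\in W^n(\Ll(\lambda)_{f_m})$ should read $z_m\in\sigma(\Ll(\lambda)_{f_m})$.
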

	
\begin{proof}
	We consider the case $n=2$; the proofs for $n>2$ are analogous. The leftmost and rightmost inclusions are trivial by definition. For the remaining inclusions, it is sufficient to show that, for every $\varepsilon>0$,
	\begin{equation}
	\label{eq:nested.def.pseudo.qnr}
		W^2_{\varepsilon,1}(\Ll)
		\subseteq \set{\lambda\in\Omega}{0\in\operatorname{B}_{\sqrt{\varepsilon}}(W^2(\Ll(\lambda)))}
		\subseteq W^2_{\sqrt{\varepsilon},2}(\Ll).
	\end{equation}
	Then the respective claims follow by taking the intersection over all $\varepsilon>0$.
	
	Let $\varepsilon>0$ and $\lambda\in W_{\varepsilon,1}^2(\Ll)$. Then there exists $f\in\dom\Ll(\lambda) \cap {\mathcal S}^2$ with
	\begin{equation}
		\sigma(\Ll(\lambda)_{f})=\{\lambda_1,\lambda_2\}\subseteq W^2(\Ll(\lambda)), \qquad \abs{\lambda_1}\abs{\lambda_2}=\abs{\det\Ll(\lambda)_{f}}<\varepsilon.
	\end{equation}
	Now the first inclusion in \eqref{eq:nested.def.pseudo.qnr} follows from
	\begin{equation}
		\dist(0,W^2(\Ll(\lambda)))\le\min\{\abs{\lambda_1},\abs{\lambda_2}\}< \sqrt{\varepsilon}.
	\end{equation}
		
	For the second inclusion, let $\lambda\!\in\!\Omega$ with $\dist(0,W^2(\Ll(\lambda)))\!<\!\!\sqrt{\varepsilon}$, i.e.\ there exists $\mu\!\in\!\C$, $\abs{\mu}\!<\!\!\sqrt{\varepsilon}$, with $\mu\!\in\! W^2(\Ll(\lambda))$ or, equivalently, $0\!\in\! W^2(\Ll(\lambda)\!-\!\mu\Ii_\Hh)$. By \eqref{eq:qnr.equiv}, the latter is in turn equivalent to 
	\begin{equation*}
		\lambda\in W^2(\Ll-\mu\Ii_{\Hh})\subseteq W^2_{\sqrt{\varepsilon},2}(\Ll). \qedhere
	\end{equation*}
\end{proof}

Clearly, in the simplest case $\Ll(\lambda)=\Aa-\lambda I_\Hh$, $\lambda\in\C$, with an $n\times n$ operator matrix $\Aa$ in $\Hh$ we \vspace{-1mm} have
\begin{equation}
\label{eq:ex-lin}
	W_{\Psi,0}^{n}(\Ll)=\overline{W^{n}(\Ll)} =\overline{W^{n}(\Aa)};
\vspace{-2mm}	
\end{equation}
this shows that $W_{\Psi,0}^n(\Ll)$ fails to enclose the spectrum of $\Ll$ whenever $\overline{W^n(\Aa)}$ does.
	
The following example shows that, already in this simple case, in fact \emph{none} of the subsets $W^n_{\Psi,1}(\Ll)\subseteq  W_{\Psi,0}^n(\Ll)\subseteq W^n_{\Psi,2}(\Ll)$ of the pseudo block numerical range $W^n_\Psi(\Ll)$, see \eqref{eq:pbnri}, is large enough to contain the approximate point spectrum $\sigma_{\rm ap}(\Ll)$.	

\begin{exple}
\label{ex:jordan}
	Let $\Hh\!=\!\ell^2(\N)\oplus\ell^2(\N)$ and $\Ll(\lambda)\!=\!\Aa-\lambda I_\Hh$, $\lambda\in\C$, with
	\begin{equation}
	\Aa	\!\defeq\!\left(\begin{array}{cc}
	\!0 \!&\! \diag(m^2\!-\!1:m\!\in\!\N)\!\! \\
	\!0 \!&\! 0\!\!
	\end{array}\right), \ \ \dom \Aa\!\defeq\!\ell^2(\N)\,\oplus\,\dom \diag(m^2\!-\!1:m\!\in\!\N),
	\end{equation}
	where $\diag(m^2-1:m\!\in\!\N)$ is the unbounded maximal multiplication operator in $\ell^2(\N)$ with domain
	\begin{align*}
		&\dom \diag(m^2\!-\!1:m\!\in\!\N)  := \big\{\{x_m\}_m \in \ell^2(\N): \{(m^2\!-\!1)x_m\}_m \in \ell^2(\N) \big\}.
		\vspace{-2mm}
	\end{align*}
	Clearly, $W^2(\Ll)=W^2(\Aa)=\{0\}$. We will now show that
	\begin{equation}
		\{0\} \!= W_{\Psi,1}^2(\Ll)\!=\!W_{\Psi,0}^2(\Ll)\!=\!W_{\Psi,2}^2(\Ll) 
		\ne W^2_\Psi(\Ll)\!=\!\app(\Ll)\!=\!\C.
	\end{equation}
By the definition of $W_{\Psi,2}^2(\Ll)$ and since $W_{\varepsilon,2}^2(\Ll) \!\subseteq\!B_\varepsilon(0)$, $\varepsilon\!>\!0$, it follows that   
$W_{\Psi,2}^2(\Ll)=\{0\}$ which, together with \eqref{eq:pbnri}, proves the first three equalities. 
	To prove the two equalities on the right, and hence the claimed inequality, let $\lambda\!\in\!\C$ be arbitrary. If $\lambda\!=\!0$, then $\lambda \in W^{2}_\Psi(\Ll)$ by \eqref{eq:nested.def.pseudo.qnr}. If $\lambda\!\neq\! 0$, we define the bounded operator matrices
	\begin{equation}
		\Bb_{k}	\defeq\left(\begin{array}{cc}
		-\diag(\frac{\lambda}{m}\delta_{mk}:m\!\in\!\N) & 0 \\[2mm]
		-\diag(\frac{\lambda^2}{m^2}\delta_{mk}:m\!\in\!\N) & \diag(\frac{\lambda}{m}\delta_{mk}:m\!\in\!\N)
		\end{array}\right), \quad k \in\N,
	\end{equation}
	where $\delta_{mk}$ denotes the Kronecker delta. Then $\norm{\Bb_{k}}\to0$ as $k\to\infty$ and a straightforward calculation shows \vspace{-1mm} that
	\begin{equation}
		(\Aa-\lambda I_\Hh) f_k \!=\!\Bb_{k}
		f_k , \quad f_k \!\defeq\!\frac{\widetilde f_k}{\|\widetilde f_k\|}\in\dom\Aa, \quad \widetilde f_k 
		\!=\! \binom{\frac{k(k+1)}{\lambda}e_{k}
		}{e_{k}}, \quad k\in\N.
		\vspace{-1mm} 
	\end{equation}
	On the one hand, for arbitrary $\varepsilon>0$, this implies that there exists $N\in\N$ such that $\norm{\Bb_N}<\varepsilon$ and $0\in\sigma_{{\rm{p}}}(\Aa-\lambda I_{\Hh}-\Bb_N)=\sigma_{\operatorname{p}}(\Ll(\lambda)-\Bb_N)$, whence
	\begin{equation}
		\lambda\in\sigma_{\operatorname{p}}(\Ll-\Bb_N)\subseteq W^2(\Ll-\Bb_N)\subseteq W_\varepsilon^2(\Ll)
	\end{equation}
	and thus $\lambda\in W_\Psi^2(\Ll)$ by intersection over all $\varepsilon>0$. On the other hand, $\lambda\in\app(\Ll)$ since the normalised sequence $\{f_k\}_{k}\subseteq\dom\Ll(\lambda)$ satisfies
	\begin{equation}
		\norm{(\Aa-\lambda) f_k }=\norm{\Bb_{k}f_k}\le\norm{\Bb_{k}}\to 0, \quad k \to\infty.
	\end{equation}
\end{exple}

With one exception, we now focus on the most important case $n\!=\!2$ for which the notation
\begin{equation}
\label{eq:n=2}
\begin{aligned}
&\Ll(\lambda) \!\defeq \!\begin{pmatrix} A(\lambda) \!&\! B(\lambda) \\ C(\lambda) \!&\! D(\lambda) \end{pmatrix} \ \mbox{ in } \Hh=\Hh_1\oplus \Hh_2, \\ 
&\dom \Ll(\lambda) \!\defeq\! \big( \dom A(\lambda) \cap \dom C(\lambda) \big) \oplus \big( \dom B(\lambda) \cap \dom D(\lambda) \big),
\end{aligned}
\end{equation}
is more  customary. We establish various inclusions between the (pseudo) quadratic numerical range $W^2_{(\Psi)}(\Ll)$ and the (pseudo) numerical ranges of the diagonal operator functions $A$, $D$, as well as between $W^2_{(\Psi)}(\Ll)$ and the (pseudo) numerical ranges of the Schur complements of $\Ll$.

\begin{prop}
	\label{prop:op.mat.fam.num}
	\begin{enumerate}
		\item The quadratic numerical range and the pseudo quadratic numerical range satisfy
		\begin{equation}
			W^2(\Ll)\subseteq W(\Ll), \quad W^2_\Psi(\Ll)\subseteq W_\Psi(\Ll).
		\end{equation}
		
		\item Let $\Omega_1:=\{\lambda \in \Omega:\Dd_1(\lambda)=\dom A(\lambda)\}$ and suppose $\dim\Hh_2 >1$.	Then
		\[
			W(A) \cap \Omega_1 \subseteq 
			W^2(\Ll), \quad  W_\Psi(A) \cap \Omega_1 \subseteq W_{\Psi,2}^2(\Ll) \subseteq W_\Psi^2(\Ll);
		\]
		if $\,\Dd_1(\lambda)\!=\!\dom A(\lambda)$ for all $\lambda\!\in\! W(A)$ or $\lambda\!\in\! W_\Psi(A)$, respectively, then
		\begin{equation}
			W(A)\subseteq W^2(\Ll), \quad  W_\Psi(A) \subseteq W_{\Psi,2}^2(\Ll) \subseteq W_\Psi^2(\Ll).
		\end{equation}
		\item Let $\Omega_2\!:=\!\{\lambda \!\in\! \Omega:\Dd_2(\lambda)\!=\!\dom D(\lambda)\}$ and suppose $\dim\Hh_1>1$. Then 
		\[
			W(D) \cap \Omega_2  \subseteq 
			W^2(\Ll), \quad  W_\Psi(D) \cap \Omega_2 \subseteq W_{\Psi,2}^2(\Ll) \subseteq W_\Psi^2(\Ll);
		\]
		if $\,\Dd_2(\lambda)\!=\!\dom D(\lambda)$ for all $\lambda\!\in\! W(D)$ or $\lambda\!\in\! W_\Psi(D)$, respectively, then
		\begin{equation}
			W(D)\subseteq W^2(\Ll), \quad  W_\Psi(D) \subseteq W_{\Psi,2}^2(\Ll) \subseteq W_\Psi^2(\Ll).
		\end{equation}
	\end{enumerate}
\end{prop}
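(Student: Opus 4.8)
The plan is to handle the three parts in turn; part (iii) will be the mirror image of part (ii) under exchanging the two block components.

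For part (i) I would start from the characterisation of Remark~\ref{rem:4.2}: $\lambda\in W^2(\Ll)$ if and only if there is $f=(f_1,f_2)\in\dom\Ll(\lambda)\cap{\mathcal S}^2$ with $0\in\sigma(\Ll(\lambda)_f)$. Since $\Ll(\lambda)_f\in\C^{2\times2}$, the relation $0\in\sigma(\Ll(\lambda)_f)$ means that $0$ is an eigenvalue, so there is $(\alpha,\beta)^{\rm t}\in\C^2\setminus\{0\}$ with $\Ll(\lambda)_f(\alpha,\beta)^{\rm t}=0$. I would then set $g\defeq\alpha f_1\oplus\beta f_2$; since $\|f_1\|=\|f_2\|=1$ this is a nonzero element of $\dom\Ll(\lambda)=\Dd_1(\lambda)\oplus\Dd_2(\lambda)$, and expanding $(\Ll(\lambda)g,g)$ in the decomposition $\Hh=\Hh_1\oplus\Hh_2$ gives $(\Ll(\lambda)g,g)=\bar\alpha\,[\Ll(\lambda)_f(\alpha,\beta)^{\rm t}]_1+\bar\beta\,[\Ll(\lambda)_f(\alpha,\beta)^{\rm t}]_2=0$, i.e.\ $\lambda\in W(\Ll)$. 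Applying this verbatim to the operator matrix family $\Ll+\Bb$ for an arbitrary $\Bb\in L(\Hh)$ (which has a bounded $2\times2$ block representation and leaves the domains unchanged) yields $W^2(\Ll+\Bb)\subseteq W(\Ll+\Bb)$, hence $W^2_\varepsilon(\Ll)\subseteq W_\varepsilon(\Ll)$ for every $\varepsilon>0$; intersecting over $\varepsilon>0$ gives $W^2_\Psi(\Ll)\subseteq W_\Psi(\Ll)$.

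For parts (ii) and (iii) the one geometric point I would isolate first is that $\Dd_1(\lambda)$ and $\Dd_2(\lambda)$ are dense in $\Hh_1$ and $\Hh_2$ respectively (since $\dom\Ll(\lambda)$ is dense in $\Hh$ and the closure of an orthogonal-sum subspace is the orthogonal sum of the closures), together with the elementary fact that a dense subspace of a Hilbert space of dimension greater than $1$ contains a unit vector orthogonal to any prescribed vector (take two linearly independent elements of the subspace and a suitable nonzero combination lying in the given closed hyperplane). Given this, for $\lambda\in W(A)\cap\Omega_1$ I would pick a unit $f_1\in\dom A(\lambda)=\Dd_1(\lambda)$ with $(A(\lambda)f_1,f_1)=0$ and then, using $\dim\Hh_2>1$, a unit $f_2\in\Dd_2(\lambda)$ with $f_2\perp C(\lambda)f_1$; then $\Ll(\lambda)_f$ is upper triangular with vanishing $(1,1)$-entry, so $0\in\sigma(\Ll(\lambda)_f)$ and $\lambda\in W^2(\Ll)$. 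For the pseudo version, fix $\varepsilon>0$ and use $\lambda\in W_\Psi(A)$, i.e.\ $0\in\overline{W(A(\lambda))}$ by Proposition~\ref{prop:pseudo.num}, to choose a unit $f_1\in\dom A(\lambda)$ with $|(A(\lambda)f_1,f_1)|<\varepsilon$; then $B_1\defeq-(A(\lambda)f_1,f_1)I_{\Hh_1}$ satisfies $\|B_1\|<\varepsilon$ and makes the $(1,1)$-entry of $(\Ll(\lambda)+\diag(B_1,0))_f$ vanish, so the previous argument gives $\lambda\in W^2(\Ll+\diag(B_1,0))\subseteq W^2_{\varepsilon,2}(\Ll)$; intersecting over $\varepsilon>0$ gives $\lambda\in W^2_{\Psi,2}(\Ll)$, and $W^2_{\Psi,2}(\Ll)\subseteq W^2_\Psi(\Ll)$ by the chain \eqref{eq:pbnri}. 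If in addition $\Dd_1(\lambda)=\dom A(\lambda)$ for all $\lambda\in W(A)$ (resp.\ for all $\lambda\in W_\Psi(A)$), then $W(A)\subseteq\Omega_1$ (resp.\ $W_\Psi(A)\subseteq\Omega_1$) and the intersection with $\Omega_1$ may be dropped. Part (iii) is obtained by the symmetric choices: for $\lambda\in W(D)\cap\Omega_2$ one picks a unit $f_2\in\Dd_2(\lambda)$ with $(D(\lambda)f_2,f_2)=0$ and, using $\dim\Hh_1>1$, a unit $f_1\in\Dd_1(\lambda)$ with $f_1\perp B(\lambda)f_2$, so that $\Ll(\lambda)_f$ becomes lower triangular with vanishing $(2,2)$-entry, the pseudo version using the diagonal perturbation $\diag(0,B_2)$.

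The arguments are elementary; the only step needing a little care is producing the second unit vector inside the merely dense component domain $\Dd_2(\lambda)$ (resp.\ $\Dd_1(\lambda)$), which is precisely where the hypothesis $\dim\Hh_2>1$ (resp.\ $\dim\Hh_1>1$) is used, together with the routine bookkeeping of the orthogonal-sum domains when forming $g$ and when adding bounded (block-diagonal) perturbations.
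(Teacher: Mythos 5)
Your proof is correct and, for the pseudo inclusions, follows exactly the paper's route: apply the corresponding non-pseudo inclusion to the perturbed family $\Ll+\Bb$ (resp.\ $\Ll+\diag(B_1,0_{\Hh_2})$), note that bounded perturbations leave the domains $\Dd_j(\lambda)$ unchanged, and intersect over $\varepsilon>0$, using \eqref{eq:pbnri} for $W^2_{\Psi,2}(\Ll)\subseteq W^2_\Psi(\Ll)$. The only difference is that you also prove the underlying non-pseudo inclusions $W^2(\Ll)\subseteq W(\Ll)$ and $W(A)\cap\Omega_1\subseteq W^2(\Ll)$ from scratch --- via the vector $g=\alpha f_1\oplus\beta f_2$ built from a kernel vector of $\Ll(\lambda)_f$, and via the triangularisation obtained by choosing a unit $f_2\in\Dd_2(\lambda)$ with $f_2\perp C(\lambda)f_1$ (which is where $\dim\Hh_2>1$ and the density of $\Dd_2(\lambda)$ enter) --- whereas the paper imports these from \cite[Prop.\ 3.2, 3.3]{Tretter-2009} through \eqref{eq:qnr.equiv}; your self-contained versions are correct.
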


\begin{proof}
	The claims for the quadratic numerical range are consequences of \eqref{eq:qnr.equiv} and of the corresponding statements \cite[Prop.\ 3.2, 3.3 (i),(ii)]{Tretter-2009} for operator matrices. So it remains to prove the claims (i) and (ii) for the pseudo quadratic numerical range; the proof of claim (iii) is completely analogous.

	(i) The inclusion for the quadratic numerical range in (i) app\-lied to $\Ll\!+\!\Bb$ with $\norm{\Bb}\!<\!\varepsilon$ yields $W^2_\varepsilon(\Ll)\!\subseteq\! W_\varepsilon(\Ll)$ for any $\varepsilon\!>\!0$. The claim for the pseu\-do quadratic numerical range follows if we take the intersection over all~$\varepsilon\!>\!0$.
	
	(ii) Let $\lambda\!\in\! W_\varepsilon(A) \cap \Omega_1$ with $\varepsilon\!>\!0$ arbitrary. Then there exists a bounded operator $B_\varepsilon$ in $\Hh_1$ with $\norm{B_\varepsilon}\!<\!\varepsilon$ and $\lambda\!\in\! W(A+B_\varepsilon)$. Since $\dom (A(\lambda)+B_\varepsilon)$ $= \dom A(\lambda) \subseteq\dom C(\lambda)$, the inclusion for the quadratic numerical range in (ii) applied to $\Ll + \diag(B_\varepsilon,0_{\Hh_2})$ shows that
	\begin{equation}
		\lambda\in W^2(\Ll+\diag(B_\varepsilon,0_{\Hh_2})) \subseteq W_{\varepsilon,2}^2(\Ll) \subseteq W^2_\varepsilon(\Ll).
	\end{equation}
	By intersecting over all $\varepsilon>0$, we obtain $\lambda\in W^2_{\Psi,2} (\Ll) \subseteq W_\Psi^2(\Ll)$. The second claim is obvious from the first one since then $\Omega_1 \subseteq W_\Psi (A)$.
\end{proof}

Both qualitative and quantitative behaviour of operator matrices are closely linked to the properties of their so-called Schur complements, see e.g.\ \cite{Tretter-2009}; the same is true for operator matrix functions, see e.g.\ \cite{Tretter-2010} for the case of bounded operator values.

\begin{defi}
	The Schur complements of the $2\times 2$ operator matrix family $\Ll=\set{\Ll(\lambda)}{\lambda\in\Omega}$ in $\Hh=\Hh_1\oplus \Hh_2$ as in \eqref{eq:n=2} are the \vspace{-1mm} families
	\begin{alignat*}{2}
		S_1(\lambda) & \defeq A(\lambda)-B(\lambda)D(\lambda)^{-1}C(\lambda), \quad && \lambda\in\rho(D), \\
		S_2(\lambda) & \defeq D(\lambda)-C(\lambda)A(\lambda)^{-1}B(\lambda), \quad &&\lambda\in\rho(A),
	\end{alignat*}
	of linear operators in $\Hh_1$ and $\Hh_2$, respectively, with \vspace{-1mm} domains
	\begin{alignat*}{2}
		\dom S_1(\lambda) & \defeq\set{f\in\Dd_1(\lambda)}{D(\lambda)^{-1}C(\lambda)f\in\dom B(\lambda)}, \quad &&\lambda\in\rho(D), \\
		\dom S_2(\lambda) & \defeq\set{f\in\Dd_2(\lambda)}{A(\lambda)^{-1}B(\lambda)f\in\dom C(\lambda)}, \quad && \lambda\in\rho(A).
	\end{alignat*}
\end{defi}

The following inclusions between the numerical ranges and pseudo numerical ranges of the Schur complements $S_1$, $S_2$ and the quadratic numerical range and pseudo quadratic numerical range, respectively, of $\Ll$~hold.

\begin{prop}
\label{prop:schur.num.incl.qnr}
\label{prop:op.mat.fam.pseudo.num}
	The numerical ranges and pseudo numerical ranges of the Schur complements satisfy
	\begin{equation}
	\label{eq:pseudo.schur.incl.pseudo.qnr}
		W(S_1)\cup W(S_2)\subseteq W^2(\Ll), \quad W_\Psi(S_1)\cup W_\Psi(S_2) \subseteq W_{\Psi,2}^2(\Ll) \subseteq W_\Psi^2(\Ll).
	\end{equation}
\end{prop}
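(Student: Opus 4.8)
The plan is to leverage the known inclusion for the (non-pseudo) numerical ranges of the Schur complements, namely $W(S_1) \cup W(S_2) \subseteq W^2(\Ll)$, which by Remark~\ref{rem:4.2} is equivalent to the pointwise statement: for each $\lambda \in \Omega$, if $0 \in W(S_1(\lambda))$ or $0 \in W(S_2(\lambda))$, then $0 \in W^2(\Ll(\lambda))$. I would first record the analogous result for operator matrices; the claim for the quadratic numerical range should follow either from the cited references or by a direct argument: if $f \in \dom S_1(\lambda)$ with $\|f\|=1$ and $(S_1(\lambda)f,f)=0$, set $g \defeq -D(\lambda)^{-1}C(\lambda)f$ (which lies in $\dom B(\lambda)$ by definition of $\dom S_1(\lambda)$), and check that the vector $(f, g/\|g\|)$ — or $(f,0)$ if $g=0$ — witnesses $0 \in \sigma(\Ll(\lambda)_{(f_i)})$, because the associated $2\times 2$ matrix has a zero eigenvalue precisely when the scalar Schur complement $(A(\lambda)f,f) - (B(\lambda)g',f)(D(\lambda)g',g')^{-1}(C(\lambda)f,g')$ vanishes, and the choice of $g$ makes this the form value of $S_1(\lambda)$ at $f$.

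Next, to pass to the pseudo quantities, I would use the standard perturbation trick already employed in the proof of Proposition~\ref{prop:op.mat.fam.num}(ii). Fix $\varepsilon > 0$ and let $\lambda \in W_\varepsilon(S_1)$. By Proposition~\ref{prop:pseudo.num} there is $f \in \dom S_1(\lambda)$, $\|f\|=1$, with $|(S_1(\lambda)f,f)| < \varepsilon$. The key observation is that a \emph{diagonal} bounded perturbation of $\Ll$ in the top-left corner, $B_\varepsilon \defeq -(S_1(\lambda)f,f)I_{\Hh_1}$, has norm $< \varepsilon$ and shifts the scalar Schur complement at $f$ to zero: indeed, the Schur complement of $A(\lambda)+B_\varepsilon$ over $D(\lambda)$ is $S_1(\lambda) + B_\varepsilon$ on the same domain $\dom S_1(\lambda)$ (since $B_\varepsilon$ is a multiple of the identity and does not change $D(\lambda)^{-1}C(\lambda)$), and its form value at $f$ is $(S_1(\lambda)f,f) - (S_1(\lambda)f,f) = 0$. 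Hence $\lambda \in W(S_1(\Ll + \diag(B_\varepsilon, 0))) \subseteq W^2(\Ll + \diag(B_\varepsilon,0)) \subseteq W^2_{\varepsilon,2}(\Ll)$, where the middle inclusion is the pointwise version of the non-pseudo Schur inclusion applied to the perturbed matrix. Intersecting over all $\varepsilon > 0$ gives $\lambda \in W^2_{\Psi,2}(\Ll)$, and $W^2_{\Psi,2}(\Ll) \subseteq W^2_\Psi(\Ll)$ is trivial from \eqref{eq:pbnri}. The argument for $S_2$ is symmetric, perturbing the bottom-right corner instead.

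The main obstacle I anticipate is bookkeeping around domains: one must verify that the perturbed Schur complement $S_1(\Ll + \diag(B_\varepsilon,0))(\lambda)$ genuinely has the same domain as $S_1(\lambda)$ and the same form value at $f$ — this is where the choice of a \emph{scalar} (identity-multiple) perturbation is essential, since it commutes past everything and leaves $\dom B(\lambda)$, $D(\lambda)^{-1}$, and $C(\lambda)$ untouched, so that $f \in \dom S_1((\Ll+\diag(B_\varepsilon,0))(\lambda))$ is automatic. A secondary subtlety is that the non-pseudo inclusion $W(S_1) \cup W(S_2) \subseteq W^2(\Ll)$ for operator matrix \emph{functions} may need to be cited from \cite{Tretter-2010} or reproved in the unbounded setting; assuming it (pointwise in $\lambda$) as a black box, the rest is the perturbation bookkeeping described above. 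No convexity, holomorphy, or sectoriality is needed anywhere.
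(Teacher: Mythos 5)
Your proposal is correct and follows essentially the same route as the paper: the non-pseudo inclusion $W(S_1)\cup W(S_2)\subseteq W^2(\Ll)$ is taken as the base case (the paper cites \cite[Thm.\ 2.5.8]{Tretter-2008} together with \eqref{eq:qnr.equiv}), and the pseudo version is obtained by applying it to $\Ll+\diag(B_\varepsilon,0)$, whose first Schur complement is $S_1+B_\varepsilon$ on the unchanged domain, landing in $W^2_{\varepsilon,2}(\Ll)$ and then intersecting over $\varepsilon>0$ --- exactly the mechanism of the paper's Proposition \ref{prop:op.mat.fam.num}\,(ii). The only slip is the fallback test vector $(f,0)$ in the case $g=0$: elements of $\dom\Ll(\lambda)\cap\mathcal S^2$ must have \emph{both} components normalised, so one should instead take $(f,h)$ with an arbitrary unit vector $h\in\Dd_2(\lambda)$, for which the resulting $2\times2$ matrix still has a vanishing first column and hence determinant zero.
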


\begin{proof}
	The first claim follows from \eqref{eq:qnr.equiv} and the corresponding statement \cite[Thm.\ 2.5.8]{Tretter-2008} for unbounded operator matrices.
	
	Using the first claim, the second claim can be proven in a similar way as the claim for the pseudo numerical range in Proposition \ref{prop:op.mat.fam.num} (ii).
\end{proof}

The following spectral enclosure properties of the block numerical range and pseudo block numerical range hold for operator matrix functions. They generalise results for the case of bounded operator values from \cite{Wagenhofer-PhD-2007}, see also \cite{Tretter-2010} for $n=2$, as well as the results for the operator function case, i.e.\ $n=1$, in Proposition \ref{thm:spec.incl.pseudo.num.ran}.

\begin{prop}
\label{prop:point.spec.incl.pseudo.qnr}
	Let $\Ll$ be a family of operator matrices. Then
	\begin{equation}
		\sigma_{\operatorname{p}}(\Ll)\subseteq W^{n}(\Ll)\subseteq W_\Psi^{n}(\Ll).
	\end{equation}
\end{prop}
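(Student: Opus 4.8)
The plan is to establish the two inclusions separately. The second one, $W^{n}(\Ll)\subseteq W^{n}_\Psi(\Ll)$, is immediate and was already recorded in Remark~\ref{rem:4.2}: for every $\varepsilon>0$ the trivial perturbation $\Bb=0$ satisfies $\norm{\Bb}<\varepsilon$, whence $W^{n}(\Ll)=W^{n}(\Ll+0)\subseteq W^{n}_\varepsilon(\Ll)$, and intersecting over $\varepsilon>0$ gives the claim.

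For the first inclusion $\sigma_{\operatorname{p}}(\Ll)\subseteq W^{n}(\Ll)$, I would fix $\lambda\in\sigma_{\operatorname{p}}(\Ll)$, so that $0\in\sigma_{\operatorname{p}}(\Ll(\lambda))$ and there is an eigenvector $f=(f_j)_{j=1}^n\in\dom\Ll(\lambda)$, $f\neq 0$, with $\Ll(\lambda)f=0$, i.e.\ $\sum_{j=1}^n L_{ij}(\lambda)f_j=0$ in $\Hh_i$ for each $i=1,\dots,n$. The first step is to pass to a tuple of \emph{unit} vectors without destroying this relation: for each index $j$ with $f_j\neq 0$ put $g_j\defeq f_j/\norm{f_j}$, and for each $j$ with $f_j=0$ choose an arbitrary unit vector $g_j\in\Dd_j(\lambda)$, which is possible since $\Ll(\lambda)$ is densely defined and hence every $\Dd_j(\lambda)$ is dense in $\Hh_j$. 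Then $g\defeq(g_j)_{j=1}^n\in\dom\Ll(\lambda)\cap{\mathcal S}^n$, and in all cases $f_j=\norm{f_j}\,g_j$ (also when $f_j=0$, since then both sides vanish).

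The second step is to read off $0$ as an eigenvalue of the matrix $\Ll(\lambda)_g=\big((L_{ij}(\lambda)g_j,g_i)\big)_{i,j=1}^n\in\C^{n\times n}$. Taking the inner product of the $i$-th component of $\Ll(\lambda)f=0$ with $g_i$, and using $f_j=\norm{f_j}g_j$ together with linearity of $L_{ij}(\lambda)$ (note $f_j,g_j\in\Dd_j(\lambda)\subseteq\dom L_{ij}(\lambda)$), one obtains $\sum_{j=1}^n\norm{f_j}\,(L_{ij}(\lambda)g_j,g_i)=0$ for every $i$; equivalently $\Ll(\lambda)_g\,c=0$ with $c\defeq(\norm{f_j})_{j=1}^n\in\C^n$, and $c\neq 0$ because $f\neq 0$. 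Hence $0\in\sigma_{\operatorname{p}}(\Ll(\lambda)_g)=\sigma(\Ll(\lambda)_g)$, so $0\in W^{n}(\Ll(\lambda))$ and therefore $\lambda\in W^{n}(\Ll)$ by the equivalence \eqref{eq:qnr.equiv}. The only point requiring care — rather than a genuine obstacle — is the treatment of vanishing components $f_j$: they must be replaced by unit vectors in the correct block domain $\Dd_j(\lambda)$, which is where density of $\dom\Ll(\lambda)$ enters, and one checks that the algebraic identity is unaffected precisely because the coefficient $\norm{f_j}$ vanishes exactly on those indices. Everything else is routine bookkeeping, and the argument runs parallel to the corresponding statements for operator matrices and to the case $n=1$ in Proposition~\ref{thm:spec.incl.pseudo.num.ran}.
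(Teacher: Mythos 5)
Your proof is correct, and it is essentially the argument the paper has in mind: the published proof simply declares the first inclusion "analogous to the bounded case" with references to Wagenhofer and Tretter, and the standard argument there is exactly yours — normalise the nonzero components of the eigenvector, replace the zero components by arbitrary unit vectors in the dense domains $\Dd_j(\lambda)$, and observe that the vector of norms $(\norm{f_j})_j\neq 0$ lies in the kernel of $\Ll(\lambda)_g$. The second inclusion is handled the same way in both (Remark~\ref{rem:4.2}), so there is nothing to add.
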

 
\begin{proof}
	The proof of the first inclusion is analogous to the bounded case, see \cite[Thm.\ 2.14]{Wagenhofer-PhD-2007} or \cite[Thm.\ 3.1]{Tretter-2010} for $n\!=\!2$; the second inclusion is obvious, see Remark \ref{rem:4.2}.
\end{proof}

\begin{thm}
	\label{thm:spec.incl.pseudo.qnr}
	Let $\Ll$ be a family of operator matrices in $\Hh=\Hh_1\oplus  \dots \oplus \Hh_n$. For \vspace{-1.5mm} every~$\varepsilon\!>\!0$,
	\begin{equation}
	\label{eq:pseudo.spec.qnr}
		\sigma_{{\rm ap},\varepsilon}(\Ll) \subseteq W_\varepsilon^n (\Ll), 
		\qquad
		\norm{\Ll(\lambda)^{-1}}\le\frac{1}{\varepsilon}, \quad \lambda\in\rho(\Ll)\setminus W_\varepsilon^n(\Ll),
		\vspace{-1.5mm}
	\end{equation}
	and \vspace{-1mm} hence
	\begin{equation}
	\label{eq:app.incl.pseudo.qnr}
		\app(\Ll)\subseteq W_\Psi^n (\Ll);
	\end{equation}
	if, for all $\lambda\in\Omega$, $\sigma(\Ll(\lambda))\subseteq\overline{W^n(\Ll(\lambda))}$, then
	\begin{equation}
		\sigma(\Ll)\subseteq W^n_{\Psi,0}(\Ll) \subseteq W_\Psi^n	(\Ll).
	\end{equation}
\end{thm}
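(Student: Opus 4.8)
The plan is to mimic the proof of Proposition~\ref{thm:spec.incl.pseudo.num.ran} for the $n=1$ case, now working with the block numerical range instead of the ordinary numerical range. First I would fix $\varepsilon>0$ and $\lambda\in\sigma_{{\rm ap},\varepsilon}(\Ll)$: by definition \eqref{eq:eps-ap-spec} there exists $f\in\dom\Ll(\lambda)$ with $\norm{f}=1$ and $\norm{\Ll(\lambda)f}<\varepsilon$. The key step is to produce a \emph{bounded diagonal-type} perturbation $\Bb$ with $\norm{\Bb}<\varepsilon$ such that $0$ lies in the block numerical range of $\Ll(\lambda)+\Bb$, which by \eqref{eq:qnr.equiv} is what $\lambda\in W^n(\Ll+\Bb)$ means. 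The natural choice is the rank-one operator $\Bb\defeq -(\cdot,g)\,\Ll(\lambda)f$ where $g\defeq\Ll(\lambda)f/\norm{\Ll(\lambda)f}$ (or $\Bb\defeq 0$ if $\Ll(\lambda)f=0$); then $(\Ll(\lambda)+\Bb)f=0$, and $\norm{\Bb}=\norm{\Ll(\lambda)f}<\varepsilon$. The slightly delicate point is checking that this $\Bb\in L(\Hh)$ genuinely forces $0\in W^n(\Ll(\lambda)+\Bb)$: one must find a block-normalised vector $h=(h_i)\in\mathcal S^n$ and exhibit $0$ as an eigenvalue of the associated $n\times n$ matrix $(\Ll(\lambda)+\Bb)_h$. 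Since $(\Ll(\lambda)+\Bb)f=0$ with $f=(f_1,\dots,f_n)$, if all $f_i\neq 0$ one simply normalises each block and observes that the vector $(\norm{f_1},\dots,\norm{f_n})^{\rm t}$ is mapped to $0$ by $(\Ll(\lambda)+\Bb)_{h}$; if some $f_i=0$, the $i$-th row and column of the matrix can be arranged (by a further harmless modification of the corresponding diagonal block of $\Bb$, still of norm $<\varepsilon$ after slight shrinking, or by noting the matrix is singular because a column vanishes) to keep $0$ in its spectrum. Hence $\lambda\in W^n(\Ll+\Bb)\subseteq W_\varepsilon^n(\Ll)$, which is the first inclusion in \eqref{eq:pseudo.spec.qnr}.

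For the resolvent estimate, I would take $\lambda\in\rho(\Ll)\setminus W_\varepsilon^n(\Ll)$ and argue by contraposition: if $\norm{\Ll(\lambda)^{-1}}>\varepsilon^{-1}$, then there is a unit vector $u$ with $\norm{\Ll(\lambda)^{-1}u}>\varepsilon^{-1}$, so $f\defeq\Ll(\lambda)^{-1}u/\norm{\Ll(\lambda)^{-1}u}$ is a unit vector with $\norm{\Ll(\lambda)f}=1/\norm{\Ll(\lambda)^{-1}u}<\varepsilon$, i.e.\ $\lambda\in\sigma_{{\rm ap},\varepsilon}(\Ll)\subseteq W_\varepsilon^n(\Ll)$, a contradiction. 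Taking the intersection over all $\varepsilon>0$ in the first inclusion of \eqref{eq:pseudo.spec.qnr} and using that $\lambda\in\app(\Ll)$ implies $\lambda\in\sigma_{{\rm ap},\varepsilon}(\Ll)$ for every $\varepsilon>0$ gives \eqref{eq:app.incl.pseudo.qnr}.

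Finally, for the spectral enclosure under the extra hypothesis, I would write $\sigma(\Ll)=\app(\Ll)\cup(\sigma(\Ll)\setminus\app(\Ll))$. The approximate point part is already contained in $W_\Psi^n(\Ll)$, and in fact in $W_{\Psi,0}^n(\Ll)$ since for $\lambda\in\app(\Ll)$ one has a normalised sequence with $\Ll(\lambda)f_k\to0$, whence $0\in\overline{W^n(\Ll(\lambda))}$ by Proposition~\ref{prop:point.spec.incl.pseudo.qnr} applied after the usual perturbation trick, giving $\lambda\in W_{\Psi,0}^n(\Ll)$. For $\lambda\in\sigma(\Ll)\setminus\app(\Ll)$, i.e.\ $0\in\sigma(\Ll(\lambda))\setminus\app(\Ll(\lambda))$, the hypothesis $\sigma(\Ll(\lambda))\subseteq\overline{W^n(\Ll(\lambda))}$ directly gives $0\in\overline{W^n(\Ll(\lambda))}$, that is, $\lambda\in W_{\Psi,0}^n(\Ll)$; then the inclusion $W_{\Psi,0}^n(\Ll)\subseteq W_\Psi^n(\Ll)$ from Proposition~\ref{prop:nested.def.pseudo.qnr} finishes the argument.

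The main obstacle I anticipate is the bookkeeping in the block/matrix step: unlike the scalar case, pinning $0$ into the $n\times n$ numerical range requires choosing block-normalised components, and one has to handle the degenerate situation where some block component $f_i$ of the near-null vector $f$ vanishes, which needs either a careful auxiliary perturbation within the same norm bound or an observation that the reduced matrix is automatically singular. Everything else is a routine transcription of the $n=1$ proof combined with Cauchy--Schwarz and the characterisation \eqref{eq:qnr.equiv}.
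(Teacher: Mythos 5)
Your proposal follows essentially the same route as the paper: a rank-one perturbation of norm $\norm{\Ll(\lambda)f}<\varepsilon$ that places $\lambda$ in the point spectrum of the perturbed family, followed by the inclusion $\sigma_{\operatorname{p}}\subseteq W^n$, then intersection over $\varepsilon$ and the resolvent estimate by contraposition. The paper simply quotes that point-spectrum inclusion as Proposition~\ref{prop:point.spec.incl.pseudo.qnr} instead of re-deriving it, and its proof of the final enclosure is more direct: the hypothesis $\sigma(\Ll(\lambda))\subseteq\overline{W^n(\Ll(\lambda))}$ gives $\sigma(\Ll)\subseteq W_{\Psi,0}^n(\Ll)$ immediately from Definition~\ref{eq:alt.def.psi0}, with no case split between $\app(\Ll)$ and its complement.

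Two small corrections. First, with $g=\Ll(\lambda)f/\norm{\Ll(\lambda)f}$ your perturbation gives $(\Ll(\lambda)+\Bb)f=(1-(f,g))\Ll(\lambda)f$, which is not zero in general; you must project onto $f$ itself, i.e.\ take $\Bb=-(\cdot,f)\,\Ll(\lambda)f$, which yields $(\Ll(\lambda)+\Bb)f=0$ and $\norm{\Bb}=\norm{\Ll(\lambda)f}<\varepsilon$ exactly as you claim (this is the operator the paper writes in \eqref{eq:app.sequ.pert}). Second, your worry about vanishing block components is unfounded: choosing arbitrary unit vectors $h_i\in\Dd_i(\lambda)$ for the indices with $f_i=0$, the identity $\sum_j\big((\Ll(\lambda)+\Bb)_{ij}f_j,h_i\big)=0$ still shows that the nonzero vector $(\norm{f_j})_j$ lies in the kernel of the matrix $(\Ll(\lambda)+\Bb)_h$, so $0$ is an eigenvalue and no auxiliary perturbation or norm-shrinking is needed.
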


\begin{proof}
	First let $\lambda \!\in\! \sigma_{{\rm ap},\varepsilon}(\Ll)$. Then there exists $f_\varepsilon\!\in\!\dom\Ll(\lambda)$, $\norm{f_\varepsilon}
	\!=\!1$, with $\norm{\Ll(\lambda) f_\varepsilon}\!<\!\varepsilon$. 
	The linear operator in $\Hh$ given \vspace{-1mm} by
	\begin{equation}
	\label{eq:app.sequ.pert}
		\Bb f	\defeq\begin{cases}
			\Ll(\lambda) \mu f_\varepsilon& {\rm if}~f= \mu f_\varepsilon	\in \ls f_\varepsilon, \\
			\ \ \ \ 0 & {\rm if}~f 	\perp f_\varepsilon,
		\end{cases}
	\end{equation}
	is bounded with $\norm{\Bb}\!=\!\norm{\Ll(\lambda)f_\varepsilon}\!<\!\varepsilon$ and $(\Ll(\lambda)\!-\!\Bb)f_\varepsilon\!=\!0$, i.e.\ $\lambda\!\in\!\sigma_{\operatorname{p}}(\Ll\!-\!\Bb)$. By Proposition \ref{prop:point.spec.incl.pseudo.qnr} and since $\|\Bb\|\!<\!\varepsilon$, we conclude that
	$\lambda\!\in\! W^n	(\Ll-\Bb)\!\subseteq\! W_\varepsilon^n(\Ll)$, which proves the first claim.
		
	The resolvent estimate in \eqref{eq:pseudo.spec.qnr} follows from the first claim and from the definition of $\sigma_{\rm{ap},\varepsilon}(\Ll)$, cf.\ the proof of Proposition \ref{thm:spec.incl.pseudo.num.ran}. 

	Taking the intersection over all $\varepsilon>0$ in the first claim, we obtain that $\app(\Ll)\subseteq W_\Psi^n(\Ll)$.
	
	Finally, the assumption that $\sigma(\Ll(\lambda))\!\subseteq\!\overline{W^n(\Ll(\lambda))}$  for all $\lambda\!\in\!\Omega$ implies~that $\sigma(\Ll)\subseteq W^n_{\Psi,0}(\Ll)$, see Definition \ref{eq:alt.def.psi0}. Now the second inequality in the last claim follows from the inclusion $W_{\Psi,0}^n(\Ll)\!\subseteq\! W_\Psi^n	(\Ll)$ by Proposition~\ref{prop:nested.def.pseudo.qnr}.
\end{proof}

\section{Spectral enclosures by pseudo numerical ranges of \\ Schur complements}
\label{sec:schur.app.encl}

In this section we establish a new enclosure of the approximate point spectrum of an operator matrix family $\Ll$ by means of the pseudo numerical ranges of the associated Schur complements and hence, by Proposition \ref{prop:op.mat.fam.pseudo.num}, in $W^2_{\Psi,2} (\Ll)$ and in the pseudo quadratic numerical range $W_\Psi^2(\Ll)$. Compared to earlier work, we no longer need restrictive dominance assumptions.

\begin{thm}
\label{thm:mat.spec.incl.schur.app}
	Let $\Ll$ be a family of operator matrices as in \eqref{eq:n=2}. If $\lambda\in\app(\Ll)\setminus(\sigma(A)\cup\sigma(D))$ is such that one of the conditions 
	\begin{enumerate}
		\item $C(\lambda)$ is $A(\lambda)$-bounded and $B(\lambda)$ is $D(\lambda)$-bounded;
		\item $A(\lambda)$ is $C(\lambda)$-bounded, $D(\lambda)$ is $B(\lambda)$-bounded 
		and both $C(\lambda)$ and $B(\lambda)$ are boundedly invertible; 
	\end{enumerate}
	is satisfied, then $\lambda\in\app(S_1)\cup\app(S_2)$. 
	If for all $\lambda\in\rho(A)\cap\rho(D)$ one of the conditions {\rm (i)} or {\rm (ii)} is satisfied, then
	\begin{equation}
	\label{eq:mat.fam.schur.app.incl}
		\begin{aligned}
		\app(\Ll)\setminus(\sigma(A)\cup\sigma(D)) &\subseteq\app(S_1)\cup\app(S_2) \\
		&\subseteq W_\Psi(S_1)\cup W_\Psi(S_2) \subseteq W^2_{\Psi,2} (\Ll) \subseteq W^2_\Psi (\Ll).
		\end{aligned}
	\end{equation}
\end{thm}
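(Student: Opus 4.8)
The plan is to fix $\lambda\in\app(\Ll)\setminus(\sigma(A)\cup\sigma(D))$ satisfying (i) or (ii) and produce a singular sequence for one of the Schur complements $S_1(\lambda)$ or $S_2(\lambda)$; the two bracketed conditions are exactly what makes the formal Schur-complement manipulation rigorous under unbounded entries. Since $\lambda\in\app(\Ll)$, there is a sequence $f_n=(g_n,h_n)^{\mathrm t}\in\dom\Ll(\lambda)$ with $\norm{f_n}=1$ and $\Ll(\lambda)f_n\to 0$, i.e.
\begin{equation}
	A(\lambda)g_n+B(\lambda)h_n\to 0,\qquad C(\lambda)g_n+D(\lambda)h_n\to 0,\qquad n\to\infty.
\end{equation}
Assume first condition (i). Since $\lambda\in\rho(D)$, from the second line we get $h_n = -D(\lambda)^{-1}C(\lambda)g_n + D(\lambda)^{-1}r_n$ with $r_n\to 0$; substituting into the first line, using that $B(\lambda)$ is $D(\lambda)$-bounded (so $B(\lambda)D(\lambda)^{-1}$ is bounded on $\ran D(\lambda)=\Hh_2$), yields $S_1(\lambda)g_n\to 0$, provided $g_n\in\dom S_1(\lambda)$, which holds because $D(\lambda)^{-1}C(\lambda)g_n = -h_n + D(\lambda)^{-1}r_n$ lands in $\dom B(\lambda)$ (here we may need a small correction term, replacing $g_n$ by itself but noting $h_n\in\dom B(\lambda)$ and $D(\lambda)^{-1}r_n\in\dom D(\lambda)\subseteq\dom B(\lambda)$ after possibly perturbing $r_n$; if $\dom D(\lambda)\not\subseteq\dom B(\lambda)$ one instead observes directly that $D(\lambda)^{-1}C(\lambda)g_n\in\dom B(\lambda)$). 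Then a dichotomy: if $\liminf_n\norm{g_n}>0$, then $g_n/\norm{g_n}$ is a singular sequence for $S_1(\lambda)$, so $\lambda\in\app(S_1)$; if $g_n\to 0$, then $h_n$ does not go to $0$ (as $\norm{f_n}=1$), and the second displayed limit gives $D(\lambda)h_n\to 0$, forcing $0\in\app(D(\lambda))$, i.e.\ $\lambda\in\sigma(D)$, contradicting the hypothesis. Under condition (ii) one argues symmetrically: invertibility of $C(\lambda)$, $B(\lambda)$ lets one solve for $g_n$ in terms of $h_n$ from the second equation ($g_n = C(\lambda)^{-1}(D(\lambda)h_n + \text{small})$, legitimate since $A(\lambda)$ being $C(\lambda)$-bounded controls $A(\lambda)C(\lambda)^{-1}$), substitute into the first, and land in $\app(S_2)$ after the same dichotomy, now excluding the degenerate branch by $\lambda\notin\sigma(A)$.

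For the displayed chain of inclusions, suppose now that (i) or (ii) holds for every $\lambda\in\rho(A)\cap\rho(D)$. The first inclusion is precisely the pointwise statement just proved, applied at each $\lambda\in\app(\Ll)\setminus(\sigma(A)\cup\sigma(D))$ (note $\app(\Ll)\setminus(\sigma(A)\cup\sigma(D))\subseteq\rho(A)\cap\rho(D)$ since $\sigma(A)$, $\sigma(D)$ are closed in $\Omega$ and the complement within $\app(\Ll)$ lies in $\rho(A)\cap\rho(D)$). The second inclusion $\app(S_i)\subseteq W_\Psi(S_i)$ is \eqref{eq:res.est} of Proposition \ref{thm:spec.incl.pseudo.num.ran} applied to the operator families $S_1$, $S_2$. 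The third inclusion $W_\Psi(S_1)\cup W_\Psi(S_2)\subseteq W^2_{\Psi,2}(\Ll)$ is exactly the second part of \eqref{eq:pseudo.schur.incl.pseudo.qnr} in Proposition \ref{prop:schur.num.incl.qnr}, and the last one $W^2_{\Psi,2}(\Ll)\subseteq W^2_\Psi(\Ll)$ is the fourth inclusion in \eqref{eq:pbnri} of Proposition \ref{prop:nested.def.pseudo.qnr}.

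The main obstacle is the domain bookkeeping in the pointwise step: one must verify that the constructed approximating vectors actually lie in $\dom S_1(\lambda)$ (resp.\ $\dom S_2(\lambda)$), which is where the relative-boundedness hypotheses (i)/(ii) do the real work — they guarantee that $B(\lambda)D(\lambda)^{-1}$ (resp.\ $C(\lambda)A(\lambda)^{-1}$, $A(\lambda)C(\lambda)^{-1}$, $D(\lambda)B(\lambda)^{-1}$) extends to a bounded operator and that the Schur-complement expression makes sense on the relevant vectors, so that $S_i(\lambda)g_n$ can be estimated by the residuals of the original singular sequence. A secondary subtlety is the degenerate branch of the dichotomy ($g_n\to 0$ or $h_n\to 0$): one has to argue that it cannot occur without violating $\lambda\notin\sigma(D)$ (resp.\ $\lambda\notin\sigma(A)$), using that the other component of $f_n$ is then bounded below in norm and feeds a singular sequence into the corresponding diagonal entry.
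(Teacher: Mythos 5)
The skeleton of your argument (eliminate one component via the resolvent of a diagonal entry, use the relative boundedness to control the error term, verify the domain condition for the Schur complement, then normalise) is exactly the paper's, and your treatment of the chain of inclusions at the end is correct. But your resolution of the dichotomy contains a genuine gap. In the degenerate branch where $g_n\to 0$ (so $\liminf_n\norm{h_n}>0$), you claim that the second equation $C(\lambda)g_n+D(\lambda)h_n\to 0$ "gives $D(\lambda)h_n\to 0$", hence $\lambda\in\sigma(D)$, a contradiction. That step requires $C(\lambda)g_n\to 0$, which does \emph{not} follow from $g_n\to 0$ when $C(\lambda)$ is unbounded: under hypothesis (i) the only control on $C(\lambda)$ is $A(\lambda)$-boundedness, and $\norm{A(\lambda)g_n}\le\norm{A(\lambda)g_n+B(\lambda)h_n}+\norm{B(\lambda)h_n}$ cannot be bounded, let alone sent to $0$, because nothing forces $B(\lambda)h_n$ to be small. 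So the contradiction is not available, and indeed you are trying to prove something stronger than the theorem asserts, namely that condition (i) alone yields $\lambda\in\app(S_1)$.

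The correct handling — and this is precisely why the conclusion is $\lambda\in\app(S_1)\cup\app(S_2)$ rather than membership in a single $\app(S_i)$ — is that the normalisation $\norm{g_n}^2+\norm{h_n}^2=1$ forces $\liminf_n\norm{g_n}>0$ \emph{or} $\liminf_n\norm{h_n}>0$; in the first case one runs your main argument with $D(\lambda)^{-1}$ and lands in $\app(S_1)$, while in the second case one runs the mirror argument with $A(\lambda)^{-1}$ (using $\lambda\in\rho(A)$ and the \emph{other half} of condition (i), namely that $C(\lambda)$ is $A(\lambda)$-bounded so that $C(\lambda)A(\lambda)^{-1}$ is bounded) and lands in $\app(S_2)$. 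The same symmetric dichotomy applies under condition (ii). Your main branch is fine; replace the attempted contradiction in the degenerate branch by this symmetric Schur-complement argument and the proof closes.
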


\begin{proof}
	Let $\lambda\in\app(\Ll)$. Then there exists a sequence $\{(u_n,v_n)\}_n\subseteq\dom\Ll(\lambda)$ with $\norm{u_n}^2+\norm{v_n}^2=1$, $n\in\N$, 
	\vspace{-1mm} and
	\begin{alignat}{3}
	\label{eq:mat.app.seq.1} 
		A(\lambda)u_n+B(\lambda)v_n & \eqdef 	h_n & ~ \to ~0, \quad n & \to\infty, \\
		\label{eq:mat.app.seq.2} 
		C(\lambda)u_n+D(\lambda)v_n & \eqdef 	k_n & ~ \to ~0, \quad n & \to\infty.
	\end{alignat}
	The normalisation implies that $\liminf_{n\to\infty}\norm{u_n}\!>\!0$ or $\liminf_{n\to\infty}\norm{v_n}\!>\!0$. Let $\liminf_{n\to\infty}\norm{u_n}\!>\!0$, without loss of generality $\inf_{n\in\N}\norm{u_n}\!>\!0$. We show that, if $\lambda \in \rho(D)$, then $\lambda\!\in\!\app(S_1)$; if $\liminf_{n\to\infty}\norm{v_n}\!>\!0$, an analogous proof yields that, if $\lambda \in \rho(A)$, then $\lambda\!\in\!\app(S_2)$.
	
	First we assume that $\lambda$ satisfies (i). Since $\lambda\in\rho(D)$, \eqref{eq:mat.app.seq.2} implies~that
	\begin{equation}
		v_n=D(\lambda)^{-1}k_n-D(\lambda)^{-1}C(\lambda)u_n, \quad n\in\N.
	\end{equation}
	Inserting this into \eqref{eq:mat.app.seq.1} and using $\dom D(\lambda)\subseteq\dom B(\lambda)$, we conclude that
	\begin{equation}
	\label{eq:mat.schur.app1}
		S_1(\lambda)u_n+B(\lambda)D(\lambda)^{-1}k_n=h_n ~ \to~ 0, \quad n\to\infty. 
	\end{equation}
	Due to (i) $B(\lambda)D(\lambda)^{-1}$ is bounded and hence $B(\lambda)D(\lambda)^{-1}k_n\to0$, $n\to\infty$.
	Then \eqref{eq:mat.schur.app1} yields that $S_1(\lambda)u_n\to0$, $n\to\infty$. Because $\inf_{n\in\N}\norm{u_n}>0$, 
	we can \vspace{-2mm} set 
	\begin{equation}
		f_n\defeq \frac{u_n}{\norm{u_n}}\in\Dd_1(\lambda)=\dom S_1(\lambda), \quad n\in\N,
	\end{equation}
	and obtain  that $S_1(\lambda)f_n\to0$ for $n\to\infty$, which proves $\lambda\in\app(S_1)$.
	
	Now assume that $\lambda$ satisfies (ii). Since $C(\lambda)$ is invertible, \eqref{eq:mat.app.seq.2} shows~that
	\begin{equation}
	\label{eq:mat.schur.app3}
		u_n=C(\lambda)^{-1}k_n-C(\lambda)^{-1}D(\lambda)v_n \eqdef C(\lambda)^{-1}k_n-w_n, \quad n\in\N,
	\end{equation}
	where $w_n\defeq C(\lambda)^{-1}D(\lambda)v_n\in\dom S_1(\lambda)$ for $n\in\N$ since
	\begin{equation}
		w_n\in\Dd_1(\lambda)=\dom C(\lambda), \quad D(\lambda)^{-1}C(\lambda)w_n=v_n\in\Dd_2(\lambda)=\dom B(\lambda).
	\end{equation}
	Inserting \eqref{eq:mat.schur.app3} into \eqref{eq:mat.app.seq.1} and using $\dom C(\lambda)\subseteq\dom A(\lambda)$, we obtain that
	\begin{equation}
	\label{eq:mat.schur.app2}
		A(\lambda)C(\lambda)^{-1}k_n-S_1(\lambda)w_n=h_n ~ \to ~0, \quad n\to\infty.
	\end{equation}
	Since $C(\lambda)^{-1}$ is bounded, it follows that $C(\lambda)^{-1}k_n\!\to\!0$, $n\!\to\!\infty$. Thus $\inf_{n\in\N}\norm{u_n}>0$ and \eqref{eq:mat.schur.app3} show	that, without loss of generality, we can assume that $\inf_{n\in\N}\norm{w_n}>0$. \vspace{-1mm} Set
	\begin{equation}
		g_n\defeq\frac{w_n}{\norm{w_n}}\in\dom S_1(\lambda), \quad n\in\N.
	\end{equation}
	By (ii) $A(\lambda)C(\lambda)^{-1}$ is bounded and so $A(\lambda)C(\lambda)^{-1}k_n\!\to\! 0$, $n\!\to\!\infty$.
	Now~\eqref{eq:mat.schur.app2} yields $S_1(\lambda)w_n\!\to\!0$ and thus $S_1(\lambda)g_n 
	\!\to\!0$, $n\!\to\!\infty$, which proves~$\lambda\!\in\!\app(S_1)$.
	
	Finally, the first inclusion in \eqref{eq:mat.fam.schur.app.incl} is obvious from what was already shown; the second inclusion in \eqref{eq:mat.fam.schur.app.incl} follows from Proposition \ref{thm:spec.incl.pseudo.num.ran} and the last two inclusions from Proposition \ref{prop:schur.num.incl.qnr}.
\end{proof}

\begin{rem}
\label{rem:nr.qnr.incl}
	If under the assumptions of Theorem \ref{thm:mat.spec.incl.schur.app}, the Schur complements $S_1$ and $S_2$
	satisfy the assumptions of Theorem  \ref{thm:markus.B} or \ref{thm:markus.A} 
	on every connected component of $\rho(D)$ and $\rho(A)$, respectively, then
	\begin{equation}
		\app(\Ll)\setminus(\sigma(A)\cup\sigma(D))\subseteq\overline{W(S_1)}\cup\overline{W(S_2)}\subseteq\overline{W^2(\Ll)},
	\end{equation}
	see Proposition \ref{prop:schur.num.incl.qnr} for the second inclusion. 
\end{rem}

For operator matrix families $\Ll$ with  off-diagonal entries that are symmetric or anti-symmetric to each other, 
we now establish conditions ensuring that the approximate point spectrum of $\Ll$ is con\-tained in the union of the approximate point spectrum of one Schur complement and the pseudo numerical range of the corresponding diagonal entry, i.e.\ $S_1$ and $D$ or $S_2$ and~$A$.

\pagebreak

\begin{thm}
\label{thm:spec.incl.def.indef}
	Let $\Ll$ be an operator matrix family as in \eqref{eq:n=2}.
	\begin{enumerate}
		\item If $\,\lambda\!\in\!\app(\Ll)\!\setminus\!\sigma(D)$ is such that $C(\lambda)\!\subseteq\! \pm B(\lambda)^*\!$, $A(\lambda)$ is accretive, $\mp D(\lambda)$ sectorial with vertex $0$ and $B(\lambda)$ is $D(\lambda)$-bounded, then $\lambda\!\in\!\app(S_1)\cup W_\Psi(D)$. If these conditions hold for all $\lambda\!\in\!\rho(D)$, then
		\begin{equation}
		\label{eq:BB*.def.indef.incl}
			\app(\Ll)\!\setminus\!\sigma(D) \!\subseteq\! \app(S_1)\!\cup\! W_\Psi(D) \!\subseteq\! W_\Psi(S_1)\cup W_\Psi(D);
		\end{equation}
		if $\dim \Hh_1 > 1$, \vspace{-1mm} then 
		\begin{equation}
		\label{eq:BB*.def.indef.incl.pseudo.qnr}
		   \app(\Ll)\!\setminus\!\sigma(D) \subseteq W_{\Psi,2}^2(\Ll)\!\subseteq\! W_\Psi^2(\Ll).
		\end{equation}
		\item If $\lambda\!\in\!\app(\Ll)\!\setminus\!\sigma(A)$ is such that $C(\lambda)\!\subseteq\! \pm B(\lambda)^*\!$, $A(\lambda)$ is sectorial with vertex $0$, $\mp D(\lambda)$ accretive and $C(\lambda)$ is $A(\lambda)$-bounded, then $\lambda\!\in\!\app(S_2)\cup W_\Psi(A)$. If these conditions hold for all $\lambda\!\in\!\rho(A)$, then
		\begin{equation}
			\hspace{9mm} \app(\Ll)\!\setminus\!\sigma(A) \!\subseteq\! \app(S_2)\!\cup\! W_\Psi(A) \!\subseteq\! W_\Psi(S_2)\cup W_\Psi(A);
		\end{equation}
		if $\dim \Hh_2 > 1$, \vspace{-1mm} then 
		\[
		   \app(\Ll)\!\setminus\!\sigma(A) \subseteq W_{\Psi,2}^2(\Ll)\!\subseteq\! W_\Psi^2(\Ll).				
		\]
 	\end{enumerate}
\end{thm}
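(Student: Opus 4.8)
The plan is to establish part~(i) in detail; part~(ii) then follows by the mirror argument, interchanging the roles of $A$ and $D$, of $\Hh_1$ and $\Hh_2$, of $B$ and $C$, and of the two Schur complements, and replacing Proposition~\ref{prop:op.mat.fam.num}~(iii) by~(ii) in the final step. Fix $\lambda\in\app(\Ll)\setminus\sigma(D)$; then $\lambda\in\rho(D)$, and I write $C(\lambda)\subseteq\epsilon B(\lambda)^*$ with $\epsilon\in\{+1,-1\}$, so that $-\epsilon D(\lambda)$ is sectorial with vertex $0$ and some semi-angle $\vartheta<\tfrac{\pi}{2}$, hence accretive. Choose $\{(u_n,v_n)\}_n\subseteq\dom\Ll(\lambda)$ with $\|u_n\|^2+\|v_n\|^2=1$ and $h_n\defeq A(\lambda)u_n+B(\lambda)v_n\to0$, $k_n\defeq C(\lambda)u_n+D(\lambda)v_n\to0$ as in \eqref{eq:mat.app.seq.1}, \eqref{eq:mat.app.seq.2}. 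After passing to a subsequence we are in one of two cases: $\inf_n\|u_n\|>0$, or $u_n\to0$ (and then $\|v_n\|^2=1-\|u_n\|^2\to1$).

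In the first case I invoke the argument used for $S_1$ in the proof of Theorem~\ref{thm:mat.spec.incl.schur.app}, which uses only that $\lambda\in\rho(D)$ and that $B(\lambda)$ is $D(\lambda)$-bounded (so $B(\lambda)D(\lambda)^{-1}$ is bounded and $\dom S_1(\lambda)=\Dd_1(\lambda)$): substituting $v_n=D(\lambda)^{-1}k_n-D(\lambda)^{-1}C(\lambda)u_n$ into the first equation gives $S_1(\lambda)u_n+B(\lambda)D(\lambda)^{-1}k_n=h_n\to0$, hence $S_1(\lambda)u_n\to0$, and normalising $f_n\defeq u_n/\|u_n\|$ yields $\lambda\in\app(S_1)$. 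In the second case the goal is $\lambda\in W_\Psi(D)$, i.e.\ $0\in\overline{W(D(\lambda))}$ by Proposition~\ref{prop:pseudo.num}. Pairing the first equation with $u_n$ gives $(B(\lambda)v_n,u_n)=(h_n,u_n)-(A(\lambda)u_n,u_n)$; since $u_n\in\dom C(\lambda)\subseteq\dom B(\lambda)^*$ and $v_n\in\dom B(\lambda)$, the (anti-)symmetry gives $(C(\lambda)u_n,v_n)=\epsilon\,\overline{(B(\lambda)v_n,u_n)}$; pairing the second equation with $v_n$ and eliminating yields
\[
   (D(\lambda)v_n,v_n)=(k_n,v_n)-\epsilon\,\overline{(h_n,u_n)}+\epsilon\,\overline{(A(\lambda)u_n,u_n)}.
\]
Taking real parts, multiplying by $\epsilon$, and using $\epsilon\re(D(\lambda)v_n,v_n)\le0$ (accretivity of $-\epsilon D(\lambda)$) together with $\re(A(\lambda)u_n,u_n)\ge0$ (accretivity of $A(\lambda)$), I obtain
\[
   0\ \ge\ \epsilon\re(D(\lambda)v_n,v_n)\ \ge\ \epsilon\re(k_n,v_n)-\re(h_n,u_n)\ \longrightarrow\ 0,
\]
so $\re(D(\lambda)v_n,v_n)\to0$; sectoriality of $-\epsilon D(\lambda)$ with vertex $0$ then forces $|\im(D(\lambda)v_n,v_n)|\le\tan\vartheta\,|\re(D(\lambda)v_n,v_n)|\to0$. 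Hence $(D(\lambda)v_n,v_n)\to0$, and normalising $v_n$ (legitimate for large $n$ since $\|v_n\|\to1$) gives $0\in\overline{W(D(\lambda))}$, i.e.\ $\lambda\in W_\Psi(D)$. This proves the pointwise claim $\lambda\in\app(S_1)\cup W_\Psi(D)$.

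For the family statement, the pointwise claim gives the first inclusion in \eqref{eq:BB*.def.indef.incl}, and $\app(S_1)\subseteq W_\Psi(S_1)$ by Proposition~\ref{thm:spec.incl.pseudo.num.ran} gives the second. For \eqref{eq:BB*.def.indef.incl.pseudo.qnr}, Proposition~\ref{prop:schur.num.incl.qnr} gives $W_\Psi(S_1)\subseteq W_\Psi(S_1)\cup W_\Psi(S_2)\subseteq W_{\Psi,2}^2(\Ll)$; moreover, since $B(\lambda)$ is $D(\lambda)$-bounded for every $\lambda\in\rho(D)$, we have $\dom D(\lambda)\subseteq\dom B(\lambda)$, hence $\Dd_2(\lambda)=\dom D(\lambda)$, so $\rho(D)\subseteq\Omega_2$ in the notation of Proposition~\ref{prop:op.mat.fam.num}~(iii), and therefore, as $\dim\Hh_1>1$, $W_\Psi(D)\cap\rho(D)\subseteq W_\Psi(D)\cap\Omega_2\subseteq W_{\Psi,2}^2(\Ll)$. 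Since $\app(\Ll)\setminus\sigma(D)\subseteq\rho(D)$ and $W_{\Psi,2}^2(\Ll)\subseteq W_\Psi^2(\Ll)$ by Proposition~\ref{prop:nested.def.pseudo.qnr}, combining these inclusions proves \eqref{eq:BB*.def.indef.incl.pseudo.qnr}.

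The main obstacle is the second case ($u_n\to0$, resp.\ $v_n\to0$ in part~(ii)): one must use the (anti-)symmetry $C(\lambda)\subseteq\pm B(\lambda)^*$ to trade the two off-diagonal inner products for the single diagonal term $(D(\lambda)v_n,v_n)$, and then carry the sign $\epsilon$ through correctly so that the accretivity of $A(\lambda)$ and the vertex-$0$ sectoriality of $\mp D(\lambda)$ squeeze the relevant real part to $0$ from the right side; the auxiliary facts ($(h_n,u_n)\to0$, $(k_n,v_n)\to0$, boundedness of $\{u_n\}$, $\{v_n\}$ and of $B(\lambda)D(\lambda)^{-1}$, and the membership $u_n\in\dom S_1(\lambda)$) are routine.
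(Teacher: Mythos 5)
Your proof is correct and follows essentially the same route as the paper: the same dichotomy on the approximating sequence, the same reduction of the first case to the Schur-complement argument of Theorem \ref{thm:mat.spec.incl.schur.app}, the same use of the (anti-)symmetry $C(\lambda)\subseteq\pm B(\lambda)^*$ to eliminate the off-diagonal terms and squeeze $\re(D(\lambda)v_n,v_n)\to 0$ between $0$ and a null sequence, and the same final appeal to Propositions \ref{thm:spec.incl.pseudo.num.ran}, \ref{prop:op.mat.fam.num}~(iii) and \ref{prop:schur.num.incl.qnr}. The only differences are cosmetic (substitution versus adding/subtracting conjugated equations, and your explicit verification that $\rho(D)\subseteq\Omega_2$, a detail the paper leaves implicit).
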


Note that here we do not assume that the entries of $\Ll$ are holomorphic. In the next section Theorem~\ref{thm:spec.incl.def.indef} 
	will be applied with $B(\lambda) = \e^{\i \omega(\lambda)} B$ and $C(\lambda) = \e^{-\i \omega(\lambda)} C$, where $C \subseteq B^*$ are constant and $\omega$ is real-valued, see the proof of Theorem~\ref{thm:spec.incl.BB*}.

The following corollary is immediate from Theorem {\rm \ref{thm:spec.incl.def.indef}} due to Proposition~\ref{prop:op.mat.fam.num} and Proposition \ref{prop:op.mat.fam.pseudo.num}.

\begin{cor}
\label{cor:spec.incl.def.indef}
	Under the assumptions of Theorem {\rm \ref{thm:spec.incl.def.indef}}, if in {\rm (i)} additionally $\sigma(D) \!  \subseteq 
	\! W_\Psi(D)$, then
	\[
		\app(\Ll) \!\subseteq\! \app(S_1)\!\cup\! W_\Psi(D) \!\subseteq\! W_\Psi(S_1)\cup W_\Psi(D)
		\!\subseteq\! W_{\Psi,2}^2(\Ll)\!\subseteq\! W_\Psi^2(\Ll),
	\]
	and if in {\rm (ii)} additionally $\sigma(A) \! \subseteq 
	\! W_\Psi(A)$, then
	\[
		\app(\Ll) \!\subseteq\! \app(S_2)\!\cup\! W_\Psi(A) \!\subseteq\! W_\Psi(S_2)\cup W_\Psi(A)
		\!\subseteq\! W_{\Psi,2}^2(\Ll)\!\subseteq\! W_\Psi^2(\Ll).
	\]
\end{cor}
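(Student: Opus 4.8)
The plan is to deduce the corollary directly from Theorem~\ref{thm:spec.incl.def.indef} by splitting $\app(\Ll)$ according to whether or not a point belongs to $\sigma(D)$ in case~(i) (and, symmetrically, along $\sigma(A)$ in case~(ii), interchanging the roles of $A$ and $D$, of $S_1$ and $S_2$, and of $\Hh_1$ and $\Hh_2$). The whole argument is a chaining of inclusions already established earlier in the paper, so no genuinely new work is needed.

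First I would invoke Theorem~\ref{thm:spec.incl.def.indef}~(i), whose hypotheses are assumed to hold for all $\lambda\in\rho(D)$: it already gives $\app(\Ll)\setminus\sigma(D)\subseteq\app(S_1)\cup W_\Psi(D)$. The additional hypothesis of the corollary, $\sigma(D)\subseteq W_\Psi(D)$, then takes care of the complementary piece, since $\app(\Ll)\cap\sigma(D)\subseteq\sigma(D)\subseteq W_\Psi(D)\subseteq\app(S_1)\cup W_\Psi(D)$. Combining the two yields the first inclusion $\app(\Ll)\subseteq\app(S_1)\cup W_\Psi(D)$.

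Next I would upgrade to pseudo numerical ranges. The inclusion $\app(S_1)\subseteq W_\Psi(S_1)$ from Proposition~\ref{thm:spec.incl.pseudo.num.ran} gives $\app(S_1)\cup W_\Psi(D)\subseteq W_\Psi(S_1)\cup W_\Psi(D)$. For the next step I would use $W_\Psi(S_1)\subseteq W_{\Psi,2}^2(\Ll)$ from Proposition~\ref{prop:op.mat.fam.pseudo.num} together with $W_\Psi(D)\subseteq W_{\Psi,2}^2(\Ll)$ from Proposition~\ref{prop:op.mat.fam.num}~(iii); the latter is applicable because $\dim\Hh_1>1$ is part of the hypotheses carried over from Theorem~\ref{thm:spec.incl.def.indef}~(i) (cf.~\eqref{eq:BB*.def.indef.incl.pseudo.qnr}) and because the $D(\lambda)$-boundedness of $B(\lambda)$ forces $\dom D(\lambda)\subseteq\dom B(\lambda)$, hence $\Dd_2(\lambda)=\dom B(\lambda)\cap\dom D(\lambda)=\dom D(\lambda)$ for every $\lambda\in\Omega$, in particular for $\lambda\in W_\Psi(D)$. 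The last inclusion $W_{\Psi,2}^2(\Ll)\subseteq W_\Psi^2(\Ll)$ is the rightmost one in \eqref{eq:pbnri}, see Proposition~\ref{prop:nested.def.pseudo.qnr}. Case~(ii) is entirely symmetric, now invoking Theorem~\ref{thm:spec.incl.def.indef}~(ii), Proposition~\ref{prop:op.mat.fam.num}~(ii) and the constancy $\Dd_1(\lambda)=\dom A(\lambda)$ coming from the $A(\lambda)$-boundedness of $C(\lambda)$, with $\dim\Hh_2>1$.

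I do not expect a real obstacle here: the content of the corollary is entirely in Theorem~\ref{thm:spec.incl.def.indef}, and the remaining inclusions are \eqref{eq:pbnri}, \eqref{eq:pseudo.schur.incl.pseudo.qnr} and Proposition~\ref{prop:op.mat.fam.num}. The only point requiring a line of justification is that the constant-domain condition $\Dd_2(\lambda)=\dom D(\lambda)$ (resp.\ $\Dd_1(\lambda)=\dom A(\lambda)$) demanded by Proposition~\ref{prop:op.mat.fam.num} is automatically met under the relative-boundedness assumptions already imposed in Theorem~\ref{thm:spec.incl.def.indef}, so that the passage $W_\Psi(D)\subseteq W^2_{\Psi,2}(\Ll)$ (resp.\ $W_\Psi(A)\subseteq W^2_{\Psi,2}(\Ll)$) is legitimate.
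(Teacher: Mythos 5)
Your proof is correct and follows exactly the route the paper intends: the paper's entire proof is the remark that the corollary is immediate from Theorem~\ref{thm:spec.incl.def.indef} together with Propositions~\ref{prop:op.mat.fam.num} and~\ref{prop:schur.num.incl.qnr}, which is precisely the splitting along $\sigma(D)$ (resp.\ $\sigma(A)$) and chaining of inclusions you carry out. The only cosmetic overstatement is the claim that $\Dd_2(\lambda)=\dom D(\lambda)$ for \emph{every} $\lambda\in\Omega$ — the relative boundedness in Theorem~\ref{thm:spec.incl.def.indef} is only assumed for $\lambda\in\rho(D)$ — but this is exactly the level of detail the paper itself leaves implicit, so it is not a gap relative to the paper's own argument.
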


\begin{proof}[Proof of Theorem {\rm \ref{thm:spec.incl.def.indef}}.]
	We only prove (i); the proof of (ii) is analogous. Let $\lambda\!\in\!\app(\Ll)\!\setminus\!\sigma(D)$. In the same way as at the beginning of the proof of Theorem \ref{thm:mat.spec.incl.schur.app} we conclude that if $\liminf_{n\to\infty}\norm{u_n}\!>\!0$, then $\lambda \!\in\! \app(S_1)$. It remains to be shown that in the case $\liminf_{n\to\infty}\norm{v_n}\!>\!0$,  without loss of generality $\inf_{n\in\N}\norm{v_n}\!>\!0$, it follows that $\lambda \!\in\! W_\Psi(D)$. 

	Taking the scalar product with $u_n$ in \eqref{eq:mat.app.seq.1} and with $v_n$ in \eqref{eq:mat.app.seq.2}, respectively, we conclude that	
	\begin{alignat}{3}
		\label{eq:pm.1} (A(\lambda)u_n,u_n) && +(B(\lambda)v_n,u_n) & =(h_n,u_n), \quad && n\in\N, \\ 
		\label{eq:pm.2} \pm (u_n,B(\lambda)v_n) && +(D(\lambda)v_n,v_n) & =(k_n,v_n), \quad && n\in\N.
	\end{alignat}
	By subtracting  from \eqref{eq:pm.1}, or adding to \eqref{eq:pm.1}, the complex conjugate of \eqref{eq:pm.2}, we deduce that
	\begin{equation}
		(A(\lambda)u_n,u_n) \mp \overline{(D(\lambda)v_n,v_n)}=(h_n,u_n) \mp \overline{(k_n,v_n)}\to0, \quad n\to\infty.
	\end{equation}
	Taking real parts and using the accretivity of $A(\lambda)$ and $\mp D(\lambda)$, we obtain
	\begin{equation}
		0\le\re(\mp D(\lambda)v_n,v_n)\le\re(A(\lambda)u_n,u_n)\mp\re(D(\lambda)v_n,v_n)\to 0, \quad n\to\infty.
	\end{equation}
	Since $\mp D(\lambda)$ is sectorial with vertex $0$ by assumption, this implies that $(\mp D(\lambda)v_n,v_n)\to0$ and hence $(D(\lambda)v_n,v_n)\to 0$, $n\to\infty$, which proves that $\lambda\!\in\! W_\Psi(D)$ by Proposition \ref{prop:pseudo.num}.
	
	Finally, the first inclusion in \eqref{eq:BB*.def.indef.incl} is obvious from what was already proved; 
	the second inclusion in \eqref{eq:BB*.def.indef.incl} follows from Proposition \ref{thm:spec.incl.pseudo.num.ran}. 
	The last claim in \eqref{eq:BB*.def.indef.incl.pseudo.qnr} is then a consequence of Propositions \ref{prop:op.mat.fam.num} (iii) and \ref{prop:schur.num.incl.qnr}.
\end{proof}

\begin{rem}
	\begin{enumerate}
		\item Sufficient conditions for the inclusions $\sigma(A)\!  \subseteq 
		\! W_\Psi(A)$ or $\sigma(D)\!  \subseteq 
		\! W_\Psi(D)$, respectively, may be found e.g.\ in Theorem \ref{thm:pseudo.dense.hol.fam} or Pro\-po\-sition~\ref{thm:spec.incl.pseudo.num.ran}.
		\item An analogue of Remark \ref{rem:nr.qnr.incl} also holds for Theorem \ref{thm:spec.incl.def.indef}; the details of all possible combinations of assumptions and corresponding inclusions are left to the reader.
	\end{enumerate}
\end{rem}

\section{Application to structured operator matrices}
\label{sec:BB*}

In this section, we apply the results of the previous section to prove new spectral enclosures and resolvent estimates for non-selfadjoint operator matrix functions exhibiting a certain dichotomy.

More precisely, we consider a linear monic family $\Ll(\lambda)=\Aa-\lambda I_\Hh$, $\lambda\in\C$, with a densely defined operator matrix
\begin{equation}
\label{eq:op.mat}
	\Aa\!=\!\left(\begin{array}{cc}
		A & B \\
		C & D
	\end{array}\right), \quad \dom \Aa\!=\! \big( \dom A \cap \dom C \big) \!\oplus\! \big( \dom B
	\cap \dom D \big)
\end{equation}
with $C\!\subseteq 
\! B^*$ in $\Hh\!=\!\Hh_1\oplus\Hh_2$. We assume that the entries of $\Aa$ 
are densely defined closable linear operators acting between the respective spaces $\Hh_1$ and/or $\Hh_2$, 
and that $A$, $-D$ are accretive or even sectorial with vertex $0$.
This means that their numerical ranges lie in 
closed sectors $\Sigma_\omega$ with semi-axis $\R_+$ and semi-angle $\omega = \pi/2 $ or $\omega \in[0,\pi/2)$,
respectively, \vspace{-1mm} given by
\begin{equation}
	\Sigma_\omega\defeq\set{z\in\C}{\abs{\arg z}\le\omega}, \quad \omega\in[0,\pi/2];
\end{equation}
here $\arg:\C\to(-\pi,\pi]$ is the argument of a complex number with $\arg0=0$. 

The next theorem no longer requires bounds on the dominance orders among the entries in the columns of $\Aa$, in contrast to earlier results in \cite[Thm.\ 5.2]{Tretter-2009} where the relative bounds had to be $0$.

\begin{thm}
\label{thm:spec.incl.BB*}
	Let $\Aa$ be an operator matrix as in \eqref{eq:op.mat} with $C\subseteq B^*$. Assume that there exist $\alpha$, $\delta \in \R
	$ and semi-angles $\varphi,\psi\in[0,\pi/2]$ with
	\begin{equation}
	\label{eq:sec.diag.entries}
		\re W(D)\le\delta<0<\alpha\le\re W(A),  \quad W(A)\subseteq\Sigma_\varphi, \quad W(D)\subseteq-\Sigma_\psi.
	\end{equation}
	Suppose further that one of the following holds:
	\begin{enumerate}
		\item $A$, $-D$ are m-accretive, $C$ is $A$-bounded, $B$ is $D$-bounded,
		\item $A$, $-D$ are m-accretive, $A$ is $C$-bounded, $D$ is $B$-bounded and $B$, $C$ are boundedly \vspace{0.9mm} invertible,
		\item \!\!$-D$ is m-sectorial with vertex $0$, i.e.\ $\psi\!<\!\pi/2$, and $B$ is~$D$-bounded,
		\item  $A$ is m-sectorial with vertex $0$, i.e.\ $\varphi\!<\!\pi/2$, and $C$ is $A$-bounded.
	\end{enumerate}
	Then, with $\tau\defeq\max\{\varphi,\psi\}$,
	\begin{equation}
	\label{eq:app.incl.sigma}
		\app(\Aa)\subseteq(-\Sigma_\tau\cup\Sigma_\tau)\cap\set{z\in\C}{\re z\notin(\delta,\alpha)}\eqdef\Sigma;
	\end{equation}
	if, in addition, $\rho(\Aa)\cap\Sigma^{\operatorname{c}}\neq\emptyset$, then $\sigma(\Aa)\subseteq\Sigma$.
\vspace{-5mm}
\end{thm}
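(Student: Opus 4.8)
The plan is to reduce the statement to the Schur-complement enclosures of the previous section and then analyze the geometry of each Schur complement's (pseudo) numerical range.

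\medskip

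\textbf{Step 1: Reduce to a Schur complement via a phase rotation.}
Fix $\lambda \in \app(\Aa)$ and write $\Ll(\lambda) = \Aa - \lambda I_\Hh$ as a $2\times 2$ operator matrix with diagonal entries $A-\lambda$, $D-\lambda$ and off-diagonal entries $B$, $C$. The key trick, already flagged in the remark after Theorem~\ref{thm:spec.incl.def.indef}, is that for $\lambda \notin \Sigma$ we may conjugate $\Ll(\lambda)$ by the diagonal unitary $\diag(\e^{\i\omega}I_{\Hh_1}, I_{\Hh_2})$ so that the corners become $\e^{\i\omega}B$ and $\e^{-\i\omega}C \subseteq \e^{-\i\omega}B^* = (\e^{\i\omega}B)^*$; this preserves $\app$, the diagonal entries, and the Schur complements up to unitary equivalence. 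Choosing $\omega = \omega(\lambda)$ appropriately (depending on whether $\re\lambda \le \delta$ or $\re\lambda \ge \alpha$, and on which of (i)--(iv) holds), I can arrange that the rotated $A-\lambda$ is accretive and $\mp(D-\lambda)$ is sectorial with vertex $0$ (or the symmetric statement with the roles of $A$ and $D$ exchanged). Then Theorem~\ref{thm:spec.incl.def.indef}, respectively Theorem~\ref{thm:mat.spec.incl.schur.app}, applies and yields $\lambda \in \app(S_1)\cup W_\Psi(D-\lambda)$ or $\lambda \in \app(S_2)\cup W_\Psi(A-\lambda)$, where $S_1, S_2$ are the Schur complements of the \emph{rotated} matrix.

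\medskip

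\textbf{Step 2: Locate $W_\Psi$ of the diagonal terms.}
By Proposition~\ref{prop:pseudo.num}, $\lambda \in W_\Psi(A-\lambda)$ means $0 \in \overline{W(A-\lambda)} = \overline{W(A)} - \lambda$, i.e.\ $\lambda \in \overline{W(A)} \subseteq \Sigma_\varphi \subseteq \Sigma_\tau$; moreover, by \eqref{eq:sec.diag.entries}, $\re\lambda \ge \alpha$, so $\lambda \in \Sigma$. Symmetrically $\lambda \in W_\Psi(D-\lambda)$ forces $\lambda \in \overline{W(D)} \subseteq -\Sigma_\psi$ with $\re\lambda \le \delta$, again $\lambda \in \Sigma$. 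So in either branch of Step~1, the ``diagonal'' alternative already places $\lambda$ in $\Sigma$, and it remains to rule out, or rather to locate, the Schur-complement alternative $\lambda \in \app(S_1)$ (resp.\ $\app(S_2)$).

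\medskip

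\textbf{Step 3: Locate $\app$ of the Schur complement.}
This is the crux. By Proposition~\ref{thm:spec.incl.pseudo.num.ran}, $\app(S_1) \subseteq W_\Psi(S_1)$, and by Proposition~\ref{prop:pseudo.num} membership $\lambda \in W_\Psi(S_1)$ means there is a normalised sequence $\{f_n\}$ with $(S_1(\lambda)f_n, f_n) \to 0$. Writing $S_1(\lambda) = (A-\lambda) - B(D-\lambda)^{-1}C$ and using $C \subseteq (\e^{\i\omega}B)^*$ appropriately (after rotation), the cross term $(B(D-\lambda)^{-1}Cf_n,f_n) = (\mp(D-\lambda)^{-1}C f_n, C f_n) \cdot(\pm 1)$ — or more precisely a quantity of the form $(D-\lambda)^{-1}$ applied to $Cf_n$, paired with $Cf_n$ — has a sign/sector controlled by the sectoriality of $D$ relative to the strip. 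Concretely, for $\re\lambda \ge \alpha > 0 > \delta \ge \re W(D)$, the operator $D-\lambda$ has numerical range in $-\Sigma_\psi - \lambda$, a set in the open left half-plane bounded away from $0$, so $(D-\lambda)^{-1}$ has numerical range in a sector around $\R_-$ of half-angle $\psi$; hence the cross term lies in $\pm(\text{half-angle } \psi \text{ sector})$. Since $(A-\lambda)$ contributes a term in $\overline{W(A)}-\lambda \subseteq \Sigma_\varphi - \lambda$, the requirement that the sum tend to $0$ forces, after taking arguments, that $\lambda$ lies in $(-\Sigma_\tau \cup \Sigma_\tau)$ with $\tau = \max\{\varphi,\psi\}$, together with $\re\lambda \notin(\delta,\alpha)$ coming from the real-part separation \eqref{eq:sec.diag.entries}; thus $\lambda \in \Sigma$. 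The symmetric analysis handles $S_2$ and the case $\re\lambda \le \delta$. The main obstacle is precisely this step: making the ``argument chase'' rigorous in the unbounded setting requires care because $S_1(\lambda)$ is not sectorial a priori, the sequence $\{f_n\}$ need not lie in a form domain of a nice operator, and one must control $Cf_n$ (via the relative boundedness hypotheses in (i)--(iv) and invertibility of $B$, $C$ in (ii)) to guarantee the cross term is even well-behaved. Once the quadratic/sector estimate on the scalar quantities $(S_1(\lambda)f_n,f_n)$ is in place, the conclusion $\app(\Aa) \subseteq \Sigma$ follows by combining the two branches.

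\medskip

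\textbf{Step 4: From $\app$ to $\sigma$.}
For the full-spectrum statement, assume $\rho(\Aa) \cap \Sigma^{\operatorname c} \neq \emptyset$. Since $\Ll(\lambda) = \Aa - \lambda I_\Hh$ is an entire (hence continuous in the generalised sense) family and $\app(\Aa) \subseteq \Sigma$, the boundary $\partial\sigma(\Aa)$ is contained in $\app(\Aa) \subseteq \Sigma$, so every point of $\Sigma^{\operatorname c}$ is either in $\rho(\Aa)$ or in the interior of $\sigma(\Aa)$; as $\Sigma^{\operatorname c}$ is connected (a consequence of the geometry: $\Sigma$ is the complement of an open strip minus two closed sectors, so its complement is connected — if this fails one argues component-wise using local constancy of the Fredholm index, cf.\ Remark~\ref{rem:spec.incl.pseudo.num.ran}) and meets $\rho(\Aa)$ by hypothesis, it is entirely in $\rho(\Aa)$. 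Hence $\sigma(\Aa) \subseteq \Sigma$. I would also remark that the resolvent estimate $\|\Ll(\lambda)^{-1}\| \le \varepsilon^{-1}$ for $\lambda \in \rho(\Aa) \setminus W_\varepsilon^2(\Aa)$ from Theorem~\ref{thm:spec.incl.pseudo.qnr}, combined with the $\varepsilon$-version of the Schur-complement argument, should upgrade \eqref{eq:app.incl.sigma} to an explicit resolvent bound outside a neighbourhood of $\Sigma$, but that is a refinement rather than part of the stated claim.
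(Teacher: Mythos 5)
Your overall strategy is the paper's: combine the Schur-complement enclosures of Theorems \ref{thm:mat.spec.incl.schur.app} and \ref{thm:spec.incl.def.indef} with a $\lambda$-dependent rotation, locate $\overline{W(S_j(\lambda))}$ by a strip/sector analysis, and pass from $\app$ to $\sigma$ via stability of the Fredholm index (your Steps 2 and 4 are fine as written). However, two steps need repair before this is a proof. First, the transformation in Step 1 does not do what you claim: conjugating $\Aa-\lambda I_\Hh$ by the unitary $\diag(\e^{\i\omega}I_{\Hh_1},I_{\Hh_2})$ rotates only the corners and leaves $A-\lambda$ and $D-\lambda$ (hence their numerical ranges) untouched, so you cannot ``arrange that the rotated $A-\lambda$ is accretive'' this way. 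What works is the non-similarity transformation $\widetilde\Ll(\lambda)=\diag(\e^{\i\omega(\lambda)}I_{\Hh_1},\e^{-\i\omega(\lambda)}I_{\Hh_2})(\Aa-\lambda I_\Hh)$, which rotates the diagonal entries in opposite directions while preserving $\e^{-\i\omega}C\subseteq(\e^{\i\omega}B)^*$; one must then check separately that left multiplication by a boundedly invertible operator preserves the condition $0\in\app(\,\cdot\,)$, and the correct case split for $\lambda\in\Sigma^{\operatorname{c}}$ is $\re\lambda\in(\delta,\alpha)$ (take $\omega=0$) versus $\abs{\arg\lambda}\in(\tau,\pi-\tau)$ with $\re\lambda\notin(\delta,\alpha)$ (take $\omega(\lambda)=\sgn(\arg\lambda)\,\abs{\tfrac{\pi}{2}-\abs{\arg\lambda}}$), not $\re\lambda\le\delta$ versus $\re\lambda\ge\alpha$.

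Second, Step 3 --- which you yourself flag as the main obstacle --- is precisely where the paper does the work (Lemma \ref{lem:schur.compl.BB*}), and your sketch stops short of it. The clean route is not an argument chase on $W((D-\lambda)^{-1})$ but the exact identity: for $f\in\dom S_1(\lambda)\subseteq\dom A\cap\dom B^*$ with $\norm{f}=1$ and $g:=(D-\lambda)^{-1}B^*f$,
\[
(S_1(\lambda)f,f)=(Af,f)-\lambda-\overline{(Dg,g)}+\overline{\lambda}\,\norm{g}^2,
\]
from which the strip case gives $\re(S_1(\lambda)f,f)\ge\alpha-\re\lambda>0$ and the sector case gives $\arg\big((S_1(\lambda)f,f)+\lambda\big)\in[-\arg\lambda,\tau]$, whence $0\notin\overline{W(S_1(\lambda))}$ for every $\lambda\in\Sigma^{\operatorname{c}}$, i.e.\ $W_\Psi(S_1)\subseteq\Sigma$ (and symmetrically for $S_2$). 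In particular your worry about controlling $Cf_n$ is unfounded: the terms $-\overline{(Dg,g)}$ and $\overline{\lambda}\norm{g}^2$ point into the correct half-plane/sector for \emph{every} $g$, so no bound on $\norm{g}$ is needed. With these two repairs the proposal coincides with the paper's proof.
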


\begin{figure}[htbp]
	\centering
	\includegraphics[width=0.68\textwidth]{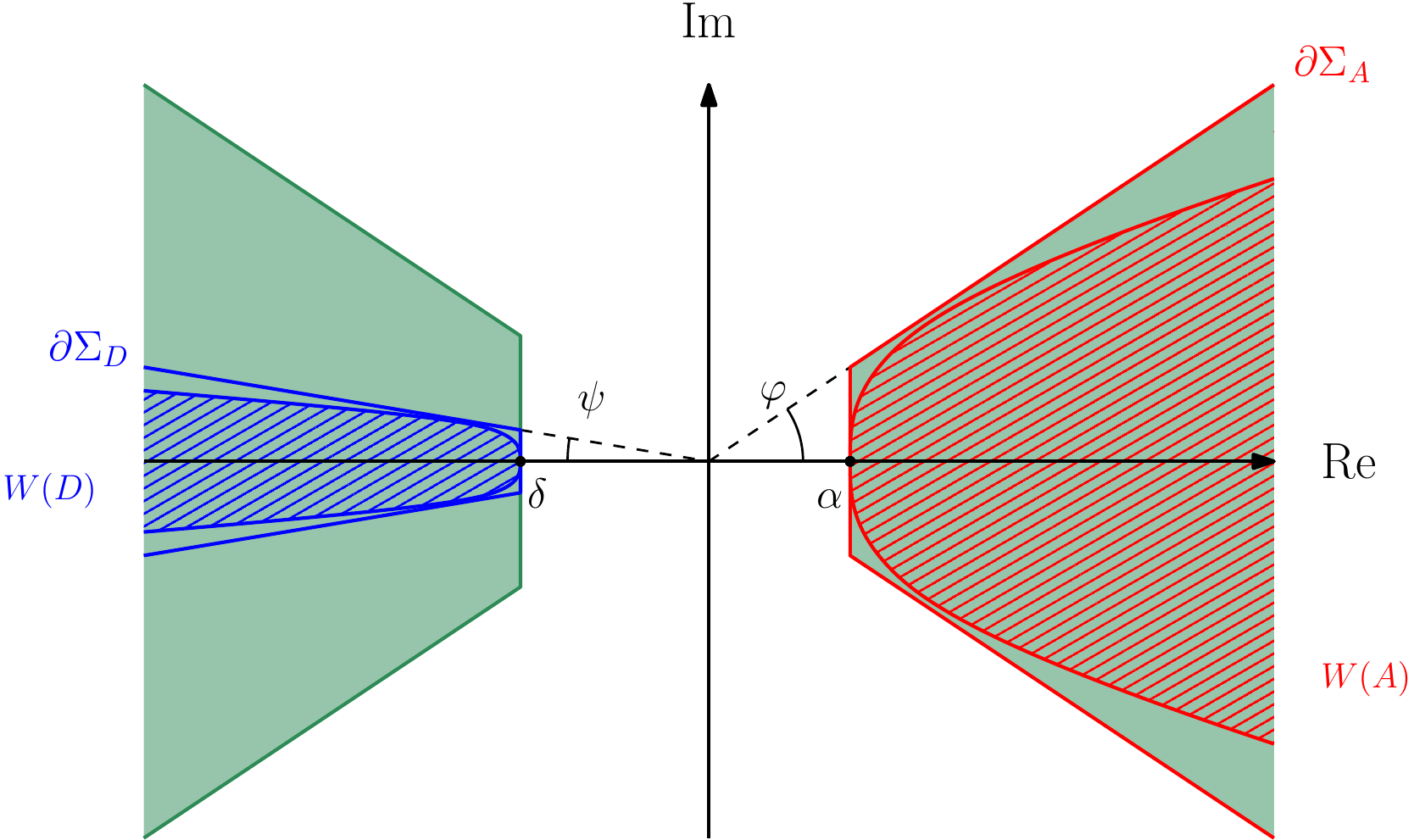}
	\caption{\small The set $\Sigma$ (green) enclosing $\app (\Aa)$, see \eqref{eq:app.incl.sigma}
	; inside the sets $\Sigma_A\!\defeq\! \Sigma_\varphi \!\setminus\! S$ (bounded by red line
	) enclosing $W(A)$ (red, dashed) and $\Sigma_D\!\defeq\! -\Sigma_\psi \!\setminus\! S $ (bounded by blue line
	) enclosing
    $W(D)$ (blue, dashed), separated by 	$S\!\defeq\! \{z\!\in\!\C:\re z\!\in\! (\delta,\alpha)\}$, 
	\vspace{-2mm}see~\eqref{eq:sec.diag.entries}.}
	\label{fig:sec}
\end{figure}

The proof of Theorem \ref{thm:spec.incl.BB*} relies on Theorems \ref{thm:mat.spec.incl.schur.app} and \ref{thm:spec.incl.def.indef}, and on the following enclosures for the pseu\-do numerical ranges of the Schur complements.

\begin{lem}
\label{lem:schur.compl.BB*}
	Let $\Aa$ be as in \eqref{eq:op.mat} with $C\!\subseteq\!B^*$ and let $\lambda\in\C$.
	\begin{enumerate}
		\item Suppose $A$,\,$-D$ are uniformly accretive,
		\begin{equation}
		\label{eq:A.D.unif.accr}
			\re W(D)\le\delta<0<\alpha\le\re W(A).
		\end{equation}
		If $\,\re \lambda \in (\delta,\alpha)$, then
		\begin{equation}
			\begin{aligned}
				\lambda\in\rho(D) & \implies \re\overline{W(S_1(\lambda))}\ge\alpha-\re \lambda>0, \\
				\lambda\in\rho(A) & \implies \re\overline{W(S_2(\lambda))}\le \delta-\re \lambda<0.
			\end{aligned}
		\end{equation}
		\item Suppose $A$,\,$-D$ are sectorial with vertex $0$,
		\begin{equation}
			W(A)\subseteq\Sigma_\varphi, \qquad W(D)\subseteq -\Sigma_\psi
		\end{equation}
		with $\varphi,\psi\!\in\![0,\pi/2)$ and let $\tau\!\defeq\!\max\{\varphi,\psi\}$. If $\,\arg\lambda\!\in\!(\tau,\pi-\tau)$, then
		\begin{equation}
		\hspace{6mm} \begin{aligned}
			\lambda\in\rho(D) & \ \implies \ \arg(\overline{W(S_1(\lambda))}+\lambda) \in [-\arg\lambda,\tau], \\
			\lambda\in\rho(A) & \ \implies \ \arg(\overline{W(S_2(\lambda))}+\lambda) \in (\!-\!\pi,-\arg\lambda]\cup[\pi-\tau,\pi];
		\end{aligned}
		\end{equation}
		if $\,\arg\lambda\!\in\!(-\pi+\tau,-\tau)$, then
		\begin{equation}
		\hspace{8mm} 	\begin{aligned}
			\lambda\in\rho(D) & \ \implies \ \arg(\overline{W(S_1(\lambda))}+\lambda) \in [-\tau,-\arg\lambda], \\
			\lambda\in\rho(A) & \ \implies \ \arg(\overline{W(S_2(\lambda))}+\lambda) \in (\!-\!\pi,-\pi+\tau]\cup[-\arg\lambda,\pi].
			\end{aligned}
		\end{equation}
	\end{enumerate}
\end{lem}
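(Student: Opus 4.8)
The plan is to reduce both parts of the lemma to a single elementary identity for the Rayleigh quotients of $S_1(\lambda)$ and $S_2(\lambda)$ coming from the relation $C\subseteq B^*$, and then to read off (i) by a real-part estimate and (ii) by a planar-cone argument. Fix $\lambda\in\rho(D)$ and a unit vector $f\in\dom S_1(\lambda)$; then $g\defeq (D-\lambda)^{-1}Cf\in\dom B$ and, since $f\in\dom C\subseteq\dom B^*$ with $B^*f=Cf$, we get $(Bg,f)=(g,B^*f)=(g,Cf)=(g,(D-\lambda)g)=\overline{((D-\lambda)g,g)}$. Hence, recalling $S_1(\lambda)+\lambda=A-B(D-\lambda)^{-1}C$,
\[
	\big((S_1(\lambda)+\lambda)f,f\big)=(Af,f)-\overline{((D-\lambda)g,g)}=(Af,f)-\overline{(Dg,g)}+\overline{\lambda}\,\norm{g}^2 .
\]
The mirror computation with $g\defeq (A-\lambda)^{-1}Bf$, valid for $\lambda\in\rho(A)$ and unit $f\in\dom S_2(\lambda)$, gives $\big((S_2(\lambda)+\lambda)f,f\big)=(Df,f)-\overline{(Ag,g)}+\overline{\lambda}\,\norm{g}^2$. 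Since all asserted enclosures are closed sets and translation by $\lambda$ commutes with taking closures, it suffices to bound $(S_i(\lambda)f,f)$ for $f$ ranging over unit vectors in $\dom S_i(\lambda)$ and then pass to $\overline{W(S_i(\lambda))}$ (the claim being vacuous if no such $f$ exists).

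For (i), taking real parts in the $S_1$-identity and using $\re(Af,f)\ge\alpha$, $\re(Dg,g)\le\delta\norm{g}^2$ and $\re\lambda>\delta$, we obtain $\re\big((S_1(\lambda)+\lambda)f,f\big)\ge\alpha+(\re\lambda-\delta)\norm{g}^2\ge\alpha$, i.e.\ $\re(S_1(\lambda)f,f)\ge\alpha-\re\lambda$, which is positive because $\re\lambda<\alpha$. The bound for $S_2$ is identical after swapping the roles of $A$ and $D$: from $\re(Df,f)\le\delta$, $\re(Ag,g)\ge\alpha\norm{g}^2$ and $\re\lambda<\alpha$ one gets $\re(S_2(\lambda)f,f)\le\delta-\re\lambda<0$.

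For (ii), sectoriality with vertex $0$ gives $(Af,f)\in\Sigma_\varphi\subseteq\Sigma_\tau$ and $-\overline{(Dg,g)}\in\Sigma_\psi\subseteq\Sigma_\tau$, so their sum lies in the convex cone $\Sigma_\tau$, while $\overline{\lambda}\norm{g}^2$ lies on the ray $\R_{\ge 0}\overline{\lambda}$, whose direction has argument $-\arg\lambda$. Thus $\big((S_1(\lambda)+\lambda)f,f\big)$ belongs to the Minkowski sum $K\defeq\Sigma_\tau+\R_{\ge 0}\overline{\lambda}$. When $\arg\lambda\in(\tau,\pi-\tau)$ we have $-\arg\lambda\in(-\pi+\tau,-\tau)$, so the ray sits just below the cone and the total angular opening of $K$ equals $\tau-(-\arg\lambda)=\tau+\arg\lambda<\pi$; hence $K$ is a proper closed convex cone, namely $\{z\in\C:\arg z\in[-\arg\lambda,\tau]\}\cup\{0\}$, which is exactly the claimed enclosure of $\overline{W(S_1(\lambda))}+\lambda$. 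Replacing $\Sigma_\tau$ by $-\Sigma_\tau$ (note $(Df,f)\in-\Sigma_\psi$ and $-\overline{(Ag,g)}\in-\Sigma_\varphi$) handles $S_2$, and the case $\arg\lambda\in(-\pi+\tau,-\tau)$ differs only in that the ray $\R_{\ge 0}\overline{\lambda}$ now sits above the cone, so the smallest arc through the generating directions is the mirrored one; in each of the four sub-cases the opening stays below $\pi$ precisely because $\tau<\pi/2$ and $\arg\lambda$ lies in the prescribed interval, and one reads off the corresponding arc.

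The routine ingredients are the identity and the sign bookkeeping in (i); the delicate step is the planar-cone analysis in (ii): one must track which arcs the three summands occupy, verify that their Minkowski sum stays inside an open half-plane (this is exactly where $\tau<\pi/2$ and the restriction on $\arg\lambda$ enter), and translate ``smallest arc containing the generating rays'' into the $(-\pi,\pi]$-valued $\arg$ convention used in the statement, including the degenerate sub-cases $g=0$ or $(Af,f)=0$, which are absorbed by the fact that $0$ lies in every cone.
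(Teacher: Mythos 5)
Your proof is correct and follows essentially the same route as the paper: the same key identity $(S_1(\lambda)f,f)=(Af,f)-\lambda-\overline{(Dg,g)}+\overline{\lambda}\norm{g}^2$ with $g=(D-\lambda)^{-1}Cf$ (the paper writes $B^*f$, which equals $Cf$ since $C\subseteq B^*$), the same real-part estimate for (i), and the same argument-tracking for (ii), which you merely rephrase as a Minkowski sum of cones.
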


\begin{proof}
	We show the claims for $S_1$, the proofs for $S_2$ are analogous. It is easy to see that it suffices to prove the claimed non-strict inequalities for $W(S_1(\lambda))$. Let $\lambda\in\rho(D)$, $f\in\dom S_1(\lambda)\subseteq\dom A\cap\dom B^*$ with $\norm{f}=1$, and set $g\defeq(D-\lambda)^{-1}B^*f$. \vspace{-1mm} Then
	\begin{equation}
	\label{eq:lem.Schur.compl.BB*}
		(S_1(\lambda )f,f)=(Af,f)-\lambda	-\overline{(Dg,g)}+\overline{\lambda}	\norm{g}^2.
	\end{equation}
	
	(i) If $\re\lambda \,\in (\delta,\alpha)$, then \eqref{eq:lem.Schur.compl.BB*} and \eqref{eq:A.D.unif.accr}	show that
	\begin{equation}
		\re(S_1(\lambda)f,f)\ge\alpha-\re\lambda+(-\delta+\re\lambda)\norm{g}^2\ge\alpha-\re \lambda>0.
	\end{equation}	
	
	(ii) We consider $\arg\lambda\!\in\!(\tau,\pi\!-\!\tau)$, the case $\arg\lambda\!\in\!(-\pi\!+\!\tau,-\tau)$ can be shown analogously. By assumption, $\lvert\arg (Af,f)\rvert\!\le\!\varphi\!\le\!\tau$, $\lvert\arg\overline{(-Dg,g)}\rvert\!\le\!\psi\!\le\!\tau$. Together with $\arg(\overline{\lambda}\norm{g}^2)=-\arg\lambda \!\in\!(-\pi\!+\!\tau,-\tau)$, it follows from \eqref{eq:lem.Schur.compl.BB*} that
	\begin{equation*}
	  \arg \big( (S_1(\lambda)f,f)\!+\!\lambda\big) 
		\!=\! \arg\big((Af,f)\!+\!\overline{(-Dg,g)}\!+\!\overline{\lambda}\norm{g}^2\big)\in[-\arg\lambda,\tau].
	\qedhere	
	\end{equation*}
\end{proof}


\begin{proof}[Proof of Theorem \textnormal{\ref{thm:spec.incl.BB*}}]
	First we use Lemma \ref{lem:schur.compl.BB*} to show that if $A$ or $-D$ are m-accretive, respectively, then
	\begin{equation}
	\label{eq:Schur.pseudo.nr.in.Sigma}
		W_\Psi(S_2)\subseteq\Sigma \quad \rm{or} \quad W_\Psi(S_1)\subseteq\Sigma.
	\end{equation}
	We prove the claim for $S_1$ by taking complements; the proof for $S_2$ is analogous. To this end, let $\lambda \in \Sigma^{\operatorname{c}} \subseteq \rho (D)$. Then $\re\lambda\in(\delta,\alpha)$ or $\abs{\arg\lambda}\in(\tau,\pi-\tau)$; note that the latter case only occurs if both $A$ and $-D$ are sectorial with vertex $0$, i.e.\ if $\tau < \pi/2$. If $\re\lambda\in(\delta,\alpha)$, Lemma \ref{lem:schur.compl.BB*} (i) implies $0\notin\overline{W(S_1(\lambda))}$, i.e.\ $\lambda\notin W_\Psi(S_1)$ by \eqref{eq:pseudo.num.id}. In the same way, if $\abs{\arg\lambda}\in(\tau,\pi-\tau)$, then $\lambda\notin W_\Psi(S_1)$ follows from Lemma \ref{lem:schur.compl.BB*} (ii); indeed, otherwise we would have $0\in\overline{W(S_1(\lambda))}$ and hence, e.g.\ if $\arg \lambda \in (\tau, \pi - \tau)$,
	\begin{equation}
		\arg (0 + \lambda) = \arg \lambda \in [-\arg \lambda, \tau] \cap (\tau, \pi - \tau) = \emptyset,
	\end{equation}
	and analogously for $\arg \lambda \in (-\pi + \tau,- \tau)$. This completes the proof of \eqref{eq:Schur.pseudo.nr.in.Sigma}.

	We show that assumptions (i) or (iii) imply \eqref{eq:app.incl.sigma}; the proof when assumptions (ii) or (iv) hold is analogous. 

	Assume first that (i) holds and let $\lambda\in\app(\Aa)$. If $\lambda\in\sigma(A)\cup\sigma(D)\subseteq\Sigma$, there is nothing to show. 
	If $\lambda\notin\sigma(A)\cup\sigma(D)$, then Theorem \ref{thm:mat.spec.incl.schur.app} (i) shows that
	$\lambda\in W_\Psi(S_1)\cup W_\Psi(S_2)$ and we conclude $\lambda\in \Sigma$ from \eqref{eq:Schur.pseudo.nr.in.Sigma}. 	
	
	Now assume that (iii) is satisfied. Then $-D$ is m-sectorial with vertex $0$ and $\sigma(D) \subseteq \overline{W(D)}\subseteq \Sigma$. In order to prove \eqref{eq:app.incl.sigma}, we  show $\app (\Aa) \cap \Sigma^{\operatorname{c}} = \emptyset$. To this end, it suffices to prove that
	\begin{equation}
	\label{eq:incl.thm.def.indef}
		\app (\Aa) \cap \Sigma^{\operatorname{c}}\subseteq W_\Psi (S_1) \cup W_\Psi (D-\cdot I_{\Hh_2});
	\end{equation}
	here, in the sequel, we write $D-\cdot I_{\Hh_2}$ for the operator family $D - \lambda I_{\Hh_2}$, $\lambda\in\C$. Indeed, if  \eqref{eq:incl.thm.def.indef} holds, then $W_\Psi(D-\cdot I_{\Hh_2}) = \overline{W(D)} \subseteq \Sigma$ and  \eqref{eq:Schur.pseudo.nr.in.Sigma} yield that $\app (\Aa) \cap \Sigma^{\operatorname{c}} \subseteq \Sigma$ and hence the claim.

	For the proof of \eqref{eq:incl.thm.def.indef}, we will use Theorem \ref{thm:spec.incl.def.indef} (i). To this end, for $\lambda \in \Sigma^{\operatorname{c}}$, we define a rotation angle
	\begin{equation}
		\omega(\lambda) \defeq \begin{cases}
			0, & \re\lambda \in (\delta, \alpha), \\
			\sgn (\arg \lambda) \big|\frac\pi2 - |\arg\lambda|\big|, 
			& \re \lambda \notin (\delta, \alpha) \wedge |\arg \lambda| \in(\tau, \pi-\tau);
		\end{cases}
	\end{equation}
	note that the second case only occurs if $A$ is sectorial with vertex $0$, i.e.\ if $\tau < \pi/2$, and that then $\lambda \neq 0$ and $|\omega(\lambda)| \in (0,\pi/2-\tau)$. 
	Define a rotated operator matrix family $\widetilde \Ll$ by
	\begin{equation}
		\widetilde \Ll(\lambda) \!\defeq\! \diag \big(\!\e^{\i \omega(\lambda)}\Ii_{\Hh_1}, \e^{-\i \omega(\lambda)}\Ii_{\Hh_2}\!\big) (\Aa-\lambda \Ii_\Hh), \ \ 
		\dom \widetilde \Ll (\lambda) \!\defeq\! \dom \Aa, \quad \lambda \!\in\!\Sigma^{\operatorname{c}}\!.
	\vspace{-1mm} 	
	\end{equation}%
Since, for fixed $\lambda \!\in\! \Sigma^{\operatorname{c}}$, the operator matrix $\diag (\e^{\i \omega(\lambda)}\Ii_{\Hh_1}, \e^{-\i \omega(\lambda)}\Ii_{\Hh_2})$ is bounded and boundedly invertible (even unitary), it is straightforward to show \vspace{-1mm}  that
\begin{equation}
	\lambda \in \app (\Aa) \, \iff \, 0 \in \app (\widetilde \Ll (\lambda)),
\end{equation}
which implies $\app (\widetilde \Ll) = \app (\Aa) \cap \Sigma^{\operatorname{c}}$. Moreover, 
	the angle $\omega(\lambda)$ is chosen such that $\e^{\i \omega(\lambda)}(A -\lambda I_{\Hh_1})$ is accretive, 
	$-\e^{-\i \omega(\lambda)}(D -\lambda I_{\Hh_2})$ is 
	sectorial with vertex~$0$ and $\e^{-\i \omega(\lambda)}C \!\subseteq\! \e^{\i \omega(\lambda)}B^*$ for every $\lambda \!\in\! \Sigma^{\operatorname{c}}$. 
	In fact, if $\re \lambda \in (\delta, \alpha)$, this is obvious.  If $\re \lambda \notin (\delta, \alpha) $ and $|\arg \lambda| \in(\tau, \pi-\tau)$,  then  $\varphi < \pi /2$ and $|\omega(\lambda)| < \pi/2-\tau $ as mentioned above. From $\re W(A) \ge \alpha >0$ and $W(A) \subseteq \Sigma_\varphi$, it thus follows that $\e^{\i \omega(\lambda)}A$ is uniformly accretive and sectorial with vertex~$0$ 
	and, since  $\re (\e^{\i \omega(\lambda)}\lambda) \le 0$
	, the claim for $\e^{\i \omega(\lambda)}(A -\lambda I_{\Hh_1})$ holds. 
	The proof for $-\e^{-\i \omega(\lambda)}(D -\lambda I_{\Hh_2})$ is analogous.
	
	Therefore $\widetilde \Ll$ satisfies the assumptions of Theorem \ref{thm:spec.incl.def.indef} (i) and, because $\sigma (\e^{-\i \omega}(D -\cdot I_{\Hh_2})) = \sigma (D) \cap \Sigma^{\operatorname{c}} = \emptyset$, \eqref{eq:BB*.def.indef.incl} therein yields that
	\begin{equation}
		\app (\Aa)\cap \Sigma^{\operatorname{c}} = \app (\widetilde \Ll) \subseteq W_\Psi (\widetilde S_1) \cup W_\Psi (\e^{-\i \omega}(D -\cdot I_{\Hh_2})),
	\end{equation}
	where $\widetilde S_1$ is the first Schur complement of $\widetilde\Ll$. Now the claim \eqref{eq:incl.thm.def.indef} follows from the above inclusion and from the fact that, since $\e^{\i \omega(\lambda)}\!\ne\! 0$,
	\begin{equation}
		0 \!\in\! \overline{W(\widetilde S_1 (\lambda))}
		\!\iff\,
		0 \!\in\! \overline{W(\e^{\i \omega(\lambda)} S_1 (\lambda))} \!=\! \e^{\i \omega(\lambda)} \overline{W(S_1 (\lambda))} 
		\iff 
		0 \!\in\! \overline{W(S_1 (\lambda))}
\vspace{-2mm}		
	\end{equation}
 	for $\lambda \!\in\! \Sigma^{\operatorname{c}}\!$, and analogously for the family $\e^{-\i \omega}(D -\cdot I_{\Hh_2})$. This completes the proof that (i) and (iii) imply~\eqref{eq:app.incl.sigma}.

	Finally, if $\rho(\Aa)\cap\Sigma^{\operatorname{c}}\neq\emptyset$, then $\Aa$ is closed and $\sigma(\Aa) \!\subseteq\!\Sigma$ follows from $\app(\Aa)\subseteq\Sigma$, see \eqref{eq:app.incl.sigma}, and from the stability of Fredholm index, see \cite[Thm.\ IV.5.17]{Kato-1995}.
\end{proof}

In Proposition \ref{thm:full.spec.incl.BB*} below, we derive sufficient conditions for $\rho(\Aa)\cap\Sigma^{\operatorname{c}}\neq\emptyset$ in Theorem \ref{thm:spec.incl.BB*} for diagonally dominant and off-diagonally dominant operator matrices. For the latter, we use a result of \cite{Cuenin-Tretter-2016}, while for the former we employ the following lemma, inspired by an estimate in \cite[Prob.\ V.3.31]{Kato-1995} for accretive operators.

\begin{lem}
	\label{lem:sec.res.est}
	Let the linear operator $T$ in $\Hh$ be m-sectorial with vertex $0$ or m-accretive, i.e.\ there exists 
	$\omega\!\in\!\left[0,\pi/2\right)$ or 
	$\omega = \pi/2$, respectively, with $\sigma(T)\!\subseteq\!\overline{W(T)}\!\subseteq\!\Sigma_\omega$. 
	\vspace{-1mm} Then
	\begin{equation}
	\norm{T(T\!-\!\lambda)^{-1}}\!\le\! \frac 1{m_T(\arg\lambda)}\!:=\!
	\left\{\begin{array}{cl}
		 \displaystyle\!\!\frac{1}{\sin(\abs{\arg\lambda}\!-\!\omega)},\! & \!\abs{\arg\lambda}\!\in\!(\omega,\omega\!+\!\frac{\pi}{2}),
		 \\[3.5mm]
		 \!\!1, & \!\abs{\arg\lambda}\!\in\![\omega\!+\!\frac{\pi}{2},\pi],
		 \end{array}\right.   
		 \!\lambda\!\notin\!\Sigma_\omega.
	\end{equation}
\end{lem}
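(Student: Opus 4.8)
The plan is to reduce the operator-norm bound to an estimate for a single vector and then to a two-dimensional distance computation. First I would observe that $\lambda\in\rho(T)$: since $T$ is m-sectorial with vertex $0$ or m-accretive, $\sigma(T)\subseteq\overline{W(T)}\subseteq\Sigma_\omega$, and as $\lambda\notin\Sigma_\omega$ the resolvent $(T-\lambda)^{-1}$ is bounded and defined on all of $\Hh$. For $u\in\Hh$ put $v\defeq(T-\lambda)^{-1}u\in\dom T$; then $T(T-\lambda)^{-1}u=Tv$ and $u=(T-\lambda)v$, so it suffices to prove
\[
	\norm{Tv}\le\frac{1}{m_T(\arg\lambda)}\,\norm{(T-\lambda)v},\qquad v\in\dom T.
\]
This is trivial when $Tv=0$, and by homogeneity in $v$ we may otherwise assume $\norm{Tv}=1$ (in particular $v\ne 0$).

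Next I would expand the right-hand side. With $z\defeq(Tv,v)$, expanding $\norm{Tv-\lambda v}^2$ and using the Cauchy--Schwarz inequality $\abs z\le\norm{Tv}\norm v=\norm v$ gives
\[
	\norm{(T-\lambda)v}^2
	=1-2\re\bigl(\overline{\lambda}\,z\bigr)+\abs\lambda^2\norm v^2
	\ \ge\ 1-2\re\bigl(\overline{\lambda}\,z\bigr)+\abs\lambda^2\abs z^2
	=\abs{1-\overline{\lambda}\,z}^2 .
\]
Since $\overline{W(T)}\subseteq\Sigma_\omega$ and $\Sigma_\omega$ is a cone, $z=(Tv,v)\in\Sigma_\omega$, i.e.\ $z=0$ or $\abs{\arg z}\le\omega$. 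Hence everything reduces to the scalar inequality
\[
	\abs{1-\overline{\lambda}\,z}\ \ge\ m_T(\arg\lambda)\qquad\text{whenever } z=0\text{ or }\abs{\arg z}\le\omega,
\]
for the fixed $\lambda\notin\Sigma_\omega$ at hand.

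Finally I would read this off geometrically. As $z$ ranges over $\{0\}\cup\{z:\abs{\arg z}\le\omega\}$, the point $w\defeq\overline{\lambda}\,z$ ranges over the origin together with the closed sector of half-angle $\omega$ bisected by the ray of argument $-\arg\lambda$; thus $\abs{1-\overline{\lambda}\,z}$ is bounded below by the distance from $1$ to that set. Because $\lambda\notin\Sigma_\omega$, the positive real axis (which contains $1$) makes the angle $\abs{\arg\lambda}-\omega>0$ with the nearest ray of the sector, and an elementary planar computation gives
\[
	\dist\bigl(1,\{w\}\bigr)=
	\begin{cases}
		\sin\bigl(\abs{\arg\lambda}-\omega\bigr), & \abs{\arg\lambda}-\omega<\tfrac\pi2,\\[1mm]
		1\ \ (\text{attained at the origin}), & \abs{\arg\lambda}-\omega\ge\tfrac\pi2;
	\end{cases}
\]
in both cases this equals $m_T(\arg\lambda)$ (note $m_T(\arg\lambda)\le1$, which also settles $z=0$), which proves the asserted bound.

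I do not expect a serious obstacle. The points needing care are: invoking $\sigma(T)\subseteq\overline{W(T)}$ to place $\lambda$ in the resolvent set so that the reduction is legitimate; the separate but immediate treatment of the degenerate case $z=(Tv,v)=0$, i.e.\ $Tv\perp v$, which does not force $Tv=0$; and the angle bookkeeping in the borderline regime $\abs{\arg\lambda}-\omega=\pi/2$, which in particular covers the m-accretive case $\omega=\pi/2$, where $\abs{\arg\lambda}-\omega$ never exceeds $\pi/2$.
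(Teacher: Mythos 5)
Your proof is correct, and it takes a genuinely different route from the paper's. The paper proves the bound by writing, for $\varepsilon\in(0,\abs{\lambda})$,
\[
	T(T-\lambda)^{-1}=-(\lambda+\varepsilon)^{-1}\big((T+\varepsilon)^{-1}-(\lambda+\varepsilon)^{-1}\big)^{-1}-\varepsilon(T-\lambda)^{-1},
\]
then invoking that $(T+\varepsilon)^{-1}$ is again m-accretive, resp.\ m-sectorial with the same semi-angle and vertex $0$, applying Kato's numerical-range resolvent estimate $\norm{(S-\mu)^{-1}}\le 1/\dist(\mu,\overline{W(S)})$ to both terms, letting $\varepsilon\to0$, and finally using the distance estimate $\dist(\lambda^{-1},\Sigma_\omega)\ge m_T(\arg\lambda)/\abs{\lambda}$ from Kato's 1961 paper. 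The $\varepsilon$-regularisation is there precisely because $0$ may lie in $\sigma(T)$, so $T^{-1}$ need not exist. Your argument bypasses all of this: the pointwise reduction to $\norm{Tv}\le m_T(\arg\lambda)^{-1}\norm{(T-\lambda)v}$, the Cauchy--Schwarz step giving $\norm{(T-\lambda)v}\ge\abs{1-\overline{\lambda}(Tv,v)}$ after normalising $\norm{Tv}=1$, and the planar distance computation from $1$ to the rotated convex sector $\overline{\lambda}\,\Sigma_\omega$ are all sound (including the borderline and degenerate cases you flag, and the fact that $(Tv,v)\in\Sigma_\omega$ because $\Sigma_\omega$ is a cone). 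What your approach buys is a shorter, self-contained proof using only the hypothesis $\overline{W(T)}\subseteq\Sigma_\omega$ and no external resolvent estimates; it also makes the optimality noted in the remark after the lemma transparent, since the only losses are Cauchy--Schwarz and the distance to the sector. What the paper's version buys is reuse of machinery: the same distance estimate \eqref{eq:kato.sec.num.dist.est} is needed again in the proof of Proposition \ref{thm:full.spec.incl.BB*}, so the authors get both results from one ingredient.
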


\begin{proof}
Let $\lambda\notin\Sigma_\omega$ and $\varepsilon\in(0,\abs{\lambda})$ be arbitrary. Then $\lambda\in\rho(T)$, $-\varepsilon\in\rho(T)$,  $\lambda\neq-\varepsilon$ and we can write
\begin{align}
		T(T-\lambda)^{-1} 
		& =(T+\varepsilon)(T+\varepsilon-(\lambda+\varepsilon))^{-1}-\varepsilon(T-\lambda)^{-1}, \\
		& \label{eq:sec.res.est} =-(\lambda+\varepsilon)^{-1}\left((T+\varepsilon)^{-1}-(\lambda+\varepsilon)^{-1}\right)^{-1}-\varepsilon(T-\lambda)^{-1}. \qquad
	\end{align}
	Since $\varepsilon>0$, it is easy to see that $T+\varepsilon$ is m-accretive or m-sectorial with semi-angle $\omega$ and vertex $0$, and hence so is $(T+\varepsilon)^{-1}$, cf.\ \cite[Prob.\ V.3.31]{Kato-1995} for the m-accretive case. Thus, by \cite[Thm.\ V.3.2]{Kato-1995} and \eqref{eq:sec.res.est}, we can \vspace{-1mm} estimate
	\begin{equation}
		\norm{T(T-\lambda)^{-1}}\le\frac{\abs{\lambda+\varepsilon}^{-1}}{\dist\left((\lambda+\varepsilon)^{-1},\Sigma_\omega\right)}+\frac{\varepsilon}{\dist\left(\lambda,\Sigma_\omega\right)}.
	\end{equation}
	The claim now follows by taking the limit $\varepsilon\to0$ and using the \vspace{-1mm} estimate
	\begin{equation}
	\label{eq:kato.sec.num.dist.est}
		\dist\left(\lambda^{-1},\Sigma_\omega\right)\ge\left\{\begin{array}{cl}
			\displaystyle\frac{\sin(\abs{\arg\lambda}-\omega)}{\abs{\lambda}}, & \,\abs{\arg\lambda}\in\left(\omega,\omega+\frac{\pi}{2}\right),
			\\[4mm]
			\displaystyle\frac{1}{\abs{\lambda}}, & \,\abs{\arg\lambda}\in\left[\omega+\frac{\pi}{2},\pi\right],
		\end{array}\right.
	\vspace{-1mm}	
	\end{equation}
	 cf.\ \cite[Thm.\ 2.2]{Kato-1961-I}.
\end{proof}

\begin{rem}
The inequality in Lemma \ref{lem:sec.res.est} is optimal, equality is  achieved e.g.\ for normal operators 
with spectrum on the boundary of $\Sigma_\omega$.
\end{rem}

\begin{prop}
\label{thm:full.spec.incl.BB*}
	Suppose that, under the assumptions of Theorem {\rm \ref{thm:spec.incl.BB*}}, we strengthen assumptions {\rm (i)} and {\rm (ii)} to
	\begin{enumerate}
		\item[{\rm (i${'}$)}] $A$, $-D$ are m-sectorial with vertex $0$, i.e.\ $\varphi$, $\psi\!<\!\pi/2$ in \eqref{eq:sec.diag.entries}, 
		$C$ is $A$-bounded~with relative bound $\delta_A$ and $B$ is $D$-bounded with relative bound $\delta_D$ such that
		\begin{align}
		\label{eq:BB*spec.incl.diag.cond} 
			\delta_A\delta_D & < \sin(\theta_{0}-\varphi)\sin(\theta_{0}+\psi) =: M_{\theta_0} \in (0,1]
		\end{align}
		where 
		\[
		 	\theta_0:= 
		 	\begin{cases}
		 		\max \big\{ \frac \pi 2 \!+\! \frac{\varphi-\psi}2, \tau \big\}, & \ \varphi \le \psi, \\
		 		\,\min \big\{ \frac \pi 2 \!+\! \frac{\varphi-\psi}2, \pi\!-\!\tau \big\}, & \ \psi < \varphi;
		 	\end{cases}
		\]
		\item[{\rm (ii${'}$)}] $A$, $-D$ are m-accretive, 
		$C\!=\!B^*$, $A$ is $C$-bounded with relative bound~$\delta_C$, $D$~is $B$-bounded with relative bound $\delta_B$ \vspace{-1mm}with
		\[
		   \delta_B \delta_C < 1,
		\]
		$B$, $C$ are boundedly in\-vert\-ible, and the relative boundedness constants $a_C$, $a_B \!\ge\! 0$, $b_C$, $b_B \!\ge\! 0$ in
		\begin{alignat*}{2}
		  \qquad &\|Ax\|^2\le a_C^2\|x\|^2+b_C^2\|Cx\|^2, \quad &&x\in \dom C, \\
			\qquad &\|Dy\|^2\le a_B^2\|y\|^2+b_B^2\|By\|^2, \ &&y \in \dom B,
		\end{alignat*}
	  	satisfy
		\[
			\sqrt{a_C^2\|B^{-1}\|^2+b_C^2} \sqrt{a_B^2\|B^{-1}\|^2+b_B^2}<1.
		\]
	\end{enumerate}
	Then $\rho(\Aa)\cap\Sigma^{\operatorname{c}}\neq\emptyset$ and hence
	\begin{equation}
		\sigma(\Aa)\subseteq(-\Sigma_\tau\cup\Sigma_\tau)\cap\set{z\in\C}{\re z\notin(\delta,\alpha)} = \Sigma.
	\end{equation}
\end{prop}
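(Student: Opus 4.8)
The plan is to reduce both cases to the last assertion of Theorem~\ref{thm:spec.incl.BB*}. Since an m-sectorial operator with vertex $0$ is in particular m-accretive, assumption (i${'}$) entails the hypotheses of Theorem~\ref{thm:spec.incl.BB*}~(i) and assumption (ii${'}$) those of Theorem~\ref{thm:spec.incl.BB*}~(ii); hence in both cases $\app(\Aa)\subseteq\Sigma$ holds, and it only remains to produce one point $\lambda\in\rho(\Aa)\cap\Sigma^{\operatorname{c}}$. The common device is to write $\Aa-\lambda$ as a product of a boundedly invertible (possibly unbounded) factor and a bounded factor $I+M$ in which $M$ is block \emph{off}-diagonal, $M=\left(\begin{smallmatrix}0&P\\ Q&0\end{smallmatrix}\right)$, and to use that then $(I+M)(I-M)=\diag(I-PQ,\,I-QP)$, so that $I+M$ is boundedly invertible as soon as $\norm{P}\,\norm{Q}<1$. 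It is precisely this \emph{product} of norms, rather than their maximum, that accounts for the product conditions $\delta_A\delta_D<M_{\theta_0}$ and $\kappa_C\kappa_B<1$ in the hypotheses.

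In the diagonally dominant case (i${'}$) I would fix $\lambda=r\,\e^{\i\theta}$ with $\theta\in(\tau,\pi-\tau)$ (nonempty since $\varphi,\psi<\pi/2$ force $\tau<\pi/2$) and $r>0$; such $\lambda$ lies in $\Sigma^{\operatorname{c}}$ because $|\arg\lambda|\in(\tau,\pi-\tau)$ puts it outside $\Sigma_\tau\cup(-\Sigma_\tau)$, while $\theta>\varphi$ and $\theta<\pi-\psi$ give $\lambda\in\rho(A)\cap\rho(D)$. Since $C$ is $A$-bounded and $B$ is $D$-bounded, $C(A-\lambda)^{-1}$ and $B(D-\lambda)^{-1}$ are everywhere defined and bounded, $\dom A\oplus\dom D=\dom\Aa$, and $\Aa-\lambda=(I+M_\lambda)\diag(A-\lambda,D-\lambda)$ with $M_\lambda=\left(\begin{smallmatrix}0&B(D-\lambda)^{-1}\\ C(A-\lambda)^{-1}&0\end{smallmatrix}\right)$. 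Relative boundedness together with Lemma~\ref{lem:sec.res.est} and $\norm{(A-\lambda)^{-1}}\le\dist(\lambda,\Sigma_\varphi)^{-1}\to0$ as $r\to\infty$ (and likewise for $D-\lambda$ and $-\Sigma_\psi$) then yield
\[
\limsup_{r\to\infty}\ \norm{B(D-\lambda)^{-1}}\,\norm{C(A-\lambda)^{-1}}\ \le\ \frac{\delta_A\,\delta_D}{\sin(\theta-\varphi)\,\sin(\theta+\psi)}.
\]
The identity $\sin(\theta-\varphi)\sin(\theta+\psi)=\tfrac12\bigl(\cos(\varphi+\psi)-\cos(2\theta+\psi-\varphi)\bigr)$ shows this denominator is maximised over $\theta\in(\tau,\pi-\tau)$ at $\theta=\theta_0$ with value $M_{\theta_0}$ — a short case distinction is needed here according to whether the unconstrained maximiser $\tfrac\pi2+\tfrac{\varphi-\psi}2$ lies in $(\tau,\pi-\tau)$ or is clamped to an endpoint. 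Choosing $\theta$ close enough to $\theta_0$, the $\limsup$ is $<1$ by~\eqref{eq:BB*spec.incl.diag.cond}, hence $\norm{M_\lambda^2}<1$ for $r$ large, $I+M_\lambda$ is boundedly invertible, and $\lambda\in\rho(\Aa)\cap\Sigma^{\operatorname{c}}$.

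In the off-diagonally dominant case (ii${'}$) I would take simply $\lambda=0\in\Sigma^{\operatorname{c}}$ (as $\delta<0<\alpha$). With $\Oo:=\left(\begin{smallmatrix}0&B\\ C&0\end{smallmatrix}\right)$ and $C=B^*$ boundedly invertible, $\Oo$ is boundedly invertible, $\Oo^{-1}=\left(\begin{smallmatrix}0&C^{-1}\\ B^{-1}&0\end{smallmatrix}\right)$, $\norm{C^{-1}}=\norm{B^{-1}}$, and $\dom\Oo=\dom\Aa$ because $A$ is $C$-bounded and $D$ is $B$-bounded. Hence $\Aa=(I+N)\Oo$ with $N:=\diag(A,D)\,\Oo^{-1}=\left(\begin{smallmatrix}0&AC^{-1}\\ DB^{-1}&0\end{smallmatrix}\right)$, and the relative boundedness inequalities (using $CC^{-1}=I$, $BB^{-1}=I$) give $\norm{AC^{-1}}\le\kappa_C:=(a_C^2\norm{B^{-1}}^2+b_C^2)^{1/2}$ and $\norm{DB^{-1}}\le\kappa_B:=(a_B^2\norm{B^{-1}}^2+b_B^2)^{1/2}$. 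Since $\kappa_C\kappa_B<1$ by hypothesis, the off-diagonal device gives $I+N$ boundedly invertible, whence $0\in\rho(\Aa)$; the condition $\delta_B\delta_C<1$, via the theory of off-diagonally dominant matrices of \cite{Cuenin-Tretter-2016}, guarantees that $\Aa$ is closed — which is in any case automatic once $0\in\rho(\Aa)$. In both cases $\rho(\Aa)\cap\Sigma^{\operatorname{c}}\neq\emptyset$, and $\sigma(\Aa)\subseteq\Sigma$ follows from the last assertion of Theorem~\ref{thm:spec.incl.BB*}.

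I expect the main obstacle to be the sharp choice of the ray angle $\theta_0$ in case (i${'}$): extracting a \emph{product}-type resolvent estimate (so that $\delta_A\delta_D$, not $\max\{\delta_A,\delta_D\}$, appears) by peeling off the diagonal part and treating the off-diagonal part as the perturbation, and then optimising $\sin(\theta-\varphi)\sin(\theta+\psi)$ with the correct clamping to $[\tau,\pi-\tau]$; the bookkeeping of domains for the two unbounded factorizations is routine but must be carried out carefully.
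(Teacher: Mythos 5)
Your proposal is correct, and for case (i$'$) it is essentially the paper's own argument: the same factorisation $\Aa-\lambda=(I+M_\lambda)\diag(A-\lambda,D-\lambda)$ along a ray $\lambda=r\e^{\i\theta}$, the same use of Lemma \ref{lem:sec.res.est} to turn relative boundedness into the product bound with denominator $\sin(\theta-\varphi)\sin(\theta+\psi)$, the same optimisation clamped to $[\tau,\pi-\tau]$ at $\theta_0$, and the same limit $r\to\infty$ (the paper phrases the last step as $1\in\rho(C(A-\lambda)^{-1}B(D-\lambda)^{-1})$ via a Frobenius--Schur argument rather than inverting $I+M_\lambda$ through $I-M_\lambda^2$, but these are the same device). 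For case (ii$'$) you take a genuinely different and more self-contained route. The paper notes that the off-diagonal part $\left(\begin{smallmatrix}0&B\\B^*&0\end{smallmatrix}\right)$ is self-adjoint with spectral gap $(-\norm{B^{-1}}^{-1},\norm{B^{-1}}^{-1})$ and invokes \cite[Thm.\ 4.7]{Cuenin-Tretter-2016} to conclude $\i\R\subseteq\rho(\Aa)$; this is where the hypothesis $\delta_B\delta_C<1$ enters. You instead exhibit the single point $0\in\rho(\Aa)\cap\Sigma^{\operatorname{c}}$ directly from the factorisation $\Aa=(I+N)\Oo$ with $\norm{AC^{-1}}\,\norm{DB^{-1}}\le\kappa_C\kappa_B<1$, which is all the proposition needs and uses only the explicit constants $a_C,b_C,a_B,b_B$ (your argument never uses $\delta_B\delta_C<1$, which is harmless). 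Your approach is more elementary and avoids the external reference, at the cost of yielding only one resolvent point rather than the whole imaginary axis; one cosmetic slip is the remark that closedness of $\Aa$ is ``automatic once $0\in\rho(\Aa)$'' --- that is circular as stated, but immaterial since closedness of $\Aa=(I+N)\Oo$ follows from $\Oo$ being closed and $I+N$ being bounded and boundedly invertible.
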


\begin{proof}
	By Theorem \ref{thm:spec.incl.BB*}, it suffices to show $\rho(\Aa)\cap\Sigma^{\operatorname{c}}\neq\emptyset$. 
	
	Suppose that (i${'}$) holds and let	$\lambda\!=\!r\e^{\i\theta}$ with $r\!>\!0$, $\theta \!\in\! (\tau,\pi\!-\!\tau)$ to be~cho\-sen later.
	Then $\lambda \!\in\! \rho(A) \cap \rho(D)$. Since $\frac 1{M_{\theta_0}} \delta_A \delta_D \!<\! 1$, there exists $\varepsilon\!>\!0$
	\vspace{-2mm}  so~that
	\begin{equation}
	\label{eq:3factors}
	  	\frac 1 {M_{\theta_0} - \varepsilon }  (\delta_A + \varepsilon ) (\delta_D + \varepsilon ) < 1.
	\end{equation} 
	Due to the relative boundedness assumption on $C$, there exist $a_A$, $b_A>0$, $b_A\in[\delta_A,\delta_A+\varepsilon)$ such that
	\begin{equation}
	\label{eq:spec.incl.BB*1}
		\norm{C(A-\lambda)^{-1}}\le a_A\norm{(A-\lambda)^{-1}}+b_A\norm{A(A-\lambda)^{-1}}.
	\end{equation}
	Since $A$ is m-sectorial with semi-angle $\varphi$ and vertex $0$, we have the estimate
	\begin{equation}
	\label{eq:spec.incl.BB*2}
		\norm{(A-\lambda)^{-1}}\le\frac{1}{\dist(\lambda,W(A))}\le\frac{1}{r m_A(\theta)},
	\end{equation}
	with $m_{A}	(\theta)$ defined as in Lemma \ref{lem:sec.res.est}, see \cite[Thm.\ V.3.2]{Kato-1995} or 
	\eqref{eq:kato.sec.num.dist.est}. Consequently, by \eqref{eq:spec.incl.BB*1}, \eqref{eq:spec.incl.BB*2} and Lemma \ref{lem:sec.res.est}, we obtain
	\begin{equation}
	\label{eq:spec.incl.BB*5}
		\norm{C(A-\lambda)^{-1}}
		\le\frac{a_A}{r m_A(\theta)}
		    +\frac{b_A}{m_A(\theta)}.
	\end{equation}
	Similarly, since $-D$ is m-sectorial with semi-angle $\psi$ and vertex $0$, and using Lemma \ref{lem:sec.res.est} as well as \eqref{eq:kato.sec.num.dist.est} and $|\arg (-\lambda)| = \pi-\theta$, we conclude that there exist $a_D$, $b_D>0$, $b_D\in[ \delta_D,\delta_D+\varepsilon)$ with
	\begin{equation}
	\label{eq:spec.incl.BB*6}
		\norm{B(D-\lambda)^{-1}}\le\frac{a_D}{rm_{-D}	(\pi -\theta)}+\frac{b_D}{{m_{-D}(\pi -\theta)}}
	\end{equation}
	with $m_{-D}(\pi -\theta)$ defined as in Lemma \ref{lem:sec.res.est} and hence
	\begin{equation}
	\label{eq:4factors}
		\| C(A-\lambda)^{-1} B(D-\lambda)^{-1} \| \!\le\! \frac {b_A b_D}{M_\theta\!} \Big( \frac{a_A}{r b_A} \!+\! 1 	\Big) 	\Big( \frac{a_D}{r b_D} 	\!+\! 1  \Big).
	\end{equation}
	Here the function
	\[  
	   [\varphi, \pi-\psi] \to [0,1], \quad \theta \mapsto	   M_\theta \defeq m_A(\theta) m_{-D}(\pi -\theta), 
	\]
	is continuous, monotonically increasing for $\theta \le  \widetilde \theta_0 := \frac \pi 2 + \frac{\varphi-\psi}2 \in [\varphi, \pi-\psi]$ and decreasing for $\theta \ge \widetilde \theta_0$. Hence, the restriction of $\theta \mapsto M_\theta$ to $[\tau, \pi-\tau]$ attains its maximum at $\theta_0$ and we can choose $\delta>0$ such that $M_{\theta_0} - \varepsilon	< M_\theta$ for 
	$\theta \in (\theta_0-\delta,\theta_0+\delta) \cap (\tau,\pi-\tau)$. Now we fix such a $\theta$. Using \eqref{eq:4factors} and \eqref{eq:3factors}, we conclude that there exists 
	$r>0$ so large that
	\begin{equation}
	  	\| C(A-\lambda)^{-1} B(D-\lambda)^{-1} \| \!\le\! \frac {(\delta_A+\varepsilon)(\delta_D+\varepsilon)} {M_{\theta_0}\!\!-\!\varepsilon	} \Big( \frac{a_A}{r b_A }  \!+\! 1	\Big) \Big( \frac{a_D}{r b_D}\!+\! 1\Big) \!<\! 1.\!
	\end{equation}
	This implies $1 \!\in\! \rho( C(A\!-\!\lambda)^{-1} B(D\!-\!\lambda)^{-1})\!$  and thus $\lambda \!\in\! \rho(\Aa)$ by \cite[Cor.~2.3.5]{Tretter-2008}.
	
	Suppose that (ii${'}$) is satisfied. By the assumptions on $B$, $C$, the operator $\Ss\!\defeq\!\Ss_1$ is self\-adjoint and has a spectral gap
	$(-\|B^{-1}\|^{-1},\|B^{-1}\|^{-1})$ around~$0$. Then \cite[Thm.\ 4.7]{Cuenin-Tretter-2016} with $\beta_T = 1/\norm{B^{-1}}$ therein implies that $\i\R \subseteq \rho(\Aa)$.
\end{proof}

\section{Application to damped wave equations in $\Rd$ with unbounded damping}
\label{sec:dwe}

In this section we use the results obtained in Section \ref{subsec:pseudo.nr.spec.encl} to derive new spectral enclosures for linearly damped wave equations with non-negative possibly singular and/or unbounded damping $a$ and potential 
$q$.

Our result covers a new class of unbounded dampings which are $p$-subord\-inate to $-\Delta+q$, a notion going back to \cite[\S I.7.1]{MR0342804}, \cite[\S 5.1]{Markus-1988}, cf.\ \cite[Sect.~3]{Tretter-Wyss-2014}.

\begin{thm}
	\label{thm:pencil.spec.incl}
	Let $\formt$ be a quadratic pencil of sesquilinear forms given by
	\begin{equation}
		\formt(\lambda)\defeq\formt_0+2\lambda\forma+\lambda^2, \quad \dom\formt(\lambda)\defeq\dom\formt_0, \quad \lambda\in\C,
	\end{equation}
	where $\formt_0$ and $\forma$ are densely defined sesquilinear forms in $\Hh$ such that $\formt_0$ is closed, $\formt_0\ge\kappa_0\ge0$, $\forma\ge\alpha_0\ge0$ and $\dom\formt_0\subseteq\dom\forma$.  Suppose that there exist $\kappa \le \kappa_0$ and $p\in(0,1)$ such that $\forma$ is $p$-form-subordinate with respect to $\formt_0-\kappa\ge0$, i.e. there is $C_{p}>0$ with
	\begin{equation}
	\label{eq:pencil.subordinate}
	\forma[f]\le C_{p}\big((\formt_0-\kappa)[f]\big)^p \big(\norm{f}^2\big)^{1-p}, \quad f\in\dom\formt_0.
	\end{equation}
	Then the family $\formt$ is holomorphic of type \textnormal{(a)}. If $\,T$ denotes the associated holomorphic family of type \textnormal{(B)}, then
	\[
 		\sigma(T) \subseteq W_\Psi(T)  \subseteq \big\{ z\!\in\!\C\!:  \re z\le 0 \big\}
	\]
	and the following more precise \vspace{1mm} spectral enclosures hold:
	\begin{enumerate}
		\item The non-real spectrum of $\,T$ is \vspace{-1.5mm}contained~in
		\begin{align*}
		\label{eq:pencil.spec.incl}
			\sigma(T)\setminus \R \subseteq W_\Psi(T) \setminus \R \subseteq \!
			\bigg\{ & z\!\in\!\C\!: \re z\le-\alpha_0, \, |z| \ge \sqrt{\kappa_0}
			, \\[-3mm]
			&\abs{\im z}\!\ge\! \sqrt{\max\!\Big\{0,C_{p}^{-\frac{1}{p}}\!\abs{\re z}^\frac{1}{p}\!\!-\!\abs{\re z}^2\!\!+\!\kappa\Big\}}\bigg\}; 
			\hspace{-10mm}
			\\[-6mm]
		\end{align*}
		\item 
		if $\,p\!<\!\frac 12$ or if $\,p\!=\!\frac 12$ and $C_{\frac 12}\! <\!1$ or if $p=\frac12$ and $C_\frac12 = 1$ and $\kappa>0$, the real spectrum of $\,T$ \vspace{-1mm} satisfies~either
		\[
	  		\sigma(T)\cap \R = \emptyset \quad \mbox{ or } \quad	\sigma(T) \cap \R \subseteq	[s^-,s^+
	  		], 
			\vspace{-1mm}
		\]
		if $\,p \!>\! \frac 12$ or if $\,p\!=\!\frac 12$ and $C_{\frac 12}\!>\!1$ or if $p=\frac12$ and $C_\frac12 = 1$ and $\kappa\le 0$, the real spectrum of $\,T$ satisfies \vspace{-1mm} either
		\[
	 		\sigma(T)\cap \R \subseteq (-\infty, r^+] \cup	[s^-\!, s^+]  
	  		\ \ \mbox{ or } \ \
	 		\sigma(T) \cap \R \subseteq 
	  		(-\infty,s^+],  
		\vspace{-1mm}		
		\]
		where $\infty<r^+< s^- \!\le\! s^+ \!\le\! 0$ depend on $p$, $C_p$, $\kappa_0$ and $\kappa$; 
		\vspace{1.5mm}
		\item if $\kappa=0$ and $p < \frac 12$, \vspace{-0.5mm} then
 		\begin{align*}
 		\qquad \ \ 
 		&\sigma(T)\cap \R = \emptyset   \hspace{6.5cm}  \mbox{ if }  (C_p^2)^{\frac 1{1-2p}} \!<\! \kappa_0, \\[-1mm]
 		&\sigma(T)\cap \R \subseteq \!\!\Big[
 		\!-\!C_pt_0^p\!-\!\sqrt{C_{p}^pt_0^{2p}\!-\!t_0 }, -C_p\kappa_0^p \!+\!\sqrt{C_p^2\kappa_0^{2p}\!-\!\kappa_0}\Big)
 		\Big]\!\! \\[-1mm]
 		& \hspace{8.7cm}  \mbox{ if }   (C_p^2)^{\frac 1{1-2p}} \!\ge\! \kappa_0,\\[-10mm]
		\end{align*}
		where $t_0:= \max \big\{ \big( 4C_p^2p(1\!-\!p) \big)^{-\frac 1{2p-1}}\!\!,\kappa_0 \big\}$;
		\item if $\kappa=0$ and $p= \frac 12$, \vspace{-0.1mm} then
 		\begin{alignat*}{2}
 			&\sigma(T)\cap \R = \emptyset && \  \mbox{ if }  C_{\frac 12}\!<\!1 \mbox{ and } \kappa_0 \!>\! 0, \\
 			&\sigma(T) \cap \R \subseteq \{0\} && \  \mbox{ if } C_\frac 12 \!<\!1 \mbox{ and } \kappa_0 \!=\! 0, \\[-1mm]
 			\hspace{6mm} &\sigma(T)\cap \R \subseteq \!\Big(\!\!-\!\infty,-\Big(C_\frac12 \!-\!\sqrt{C_\frac12^2 \!-\! 1}\Big) \kappa_0^{\frac 12}	\Big] & &\  \mbox{ if }  C_{\frac 12} \!\ge\! 1;
 		\end{alignat*}
		\item if $\kappa=0$ and $p> \frac 12$, \vspace{-0.1mm} 
		then 
		\begin{alignat*}{2}
			&\sigma(T)\cap \R \subseteq \Big(\!\!-\!\infty,  -C_pt_0^p+\sqrt{C_{p}^2t_0^{2p}\!-\!t_0 }\,\Big] && \ \mbox{ if }  \kappa_0 > 0, \\[-1mm]
			& \sigma(T)\cap \R \subseteq \Big(\!\!-\!\infty, -C_pt_0^p+\sqrt{C_{p}^2t_0^{2p}\!-\!t_0 }\,\Big] \cup \{0\} && \  \mbox{ if }  \kappa_0 = 0,
		\\[-8mm]	
		\end{alignat*}
		where $t_0:= \max \big\{ \big( 4C_p^2p(1\!-\!p) \big)^{-\frac 1{2p-1}}\!\!,\kappa_0 \big\}$.
		\end{enumerate}
\end{thm}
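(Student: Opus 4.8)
The plan is to reduce the whole statement to an explicit description of $\overline{W(\formt)}$ and then to a one-parameter analysis of the scalar quadratic attached to a unit vector.

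First I would verify that $\formt$ is holomorphic of type~\textnormal{(a)}: the map $\lambda\mapsto\formt(\lambda)[f]=\formt_0[f]+2\lambda\forma[f]+\lambda^2\norm{f}^2$ is a polynomial in~$\lambda$, the domain $\dom\formt(\lambda)=\dom\formt_0$ is constant, and $p$-form-subordinacy with $p\in(0,1)$ gives, by Young's inequality, that $\forma$ is infinitesimally $\formt_0$-form-bounded, whence the forms $\formt(\lambda)$ are densely defined, closed and sectorial and \cite[Eqn.\ VII.(4.7)]{Kato-1995} applies (compare the analogous verifications for damped wave equations in \cite{Freitas-Siegl-Tretter-2018,Jacob-Tretter-Trunk-Vogt-2018,Jacob-Trunk-2009}). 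Since the leading form coefficient of the pencil is $f\mapsto\norm{f}^2$ with numerical range $\{1\}\not\ni 0$, Proposition~\ref{prop:poly.pseudo.num.op} (form-polynomial version) gives $W_\Psi(\formt)\subseteq\overline{W(\formt)}$, Theorem~\ref{prop:hol.fam.incl.i} gives the reverse inclusion, hence $W_\Psi(\formt)=\overline{W(\formt)}$; moreover condition~\eqref{eq:ass.pseudo.dense.hol.fam} holds with $k=2$ because $\formt^{(2)}(\mu)[f]=2\norm{f}^2$, so Theorem~\ref{thm:pseudo.dense.hol.fam} together with Corollary~\ref{cor:pseudo.num}\,(ii) yields $\sigma(T)\subseteq W_\Psi(T)=W_\Psi(\formt)=\overline{W(\formt)}$. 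It then suffices to enclose $\overline{W(\formt)}$, and since $W(\formt_0)\subseteq[\kappa_0,\infty)$ the point $0$ can belong to $\overline{W(\formt)}$ only if $\kappa_0=0$ (this accounts for the sets $\{0\}$ in (iii)--(v)).

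Next, for a unit vector $f$ I would set $\alpha\defeq\forma[f]\ge\alpha_0$ and $\tau\defeq\formt_0[f]\ge\kappa_0$; then $\lambda\in W(\formt)$ is realised by~$f$ precisely when $\lambda^2+2\alpha\lambda+\tau=0$, i.e.\ $\lambda=-\alpha\pm\sqrt{\alpha^2-\tau}$, so all such $\lambda$ satisfy $\re\lambda\le0$, and $p$-subordinacy forces $(\alpha,\tau)$ into $\Gamma\defeq\{\alpha_0\le\alpha\le C_p(\tau-\kappa)^p,\ \tau\ge\kappa_0\}$. If $\alpha^2<\tau$ the roots are non-real with $\re\lambda=-\alpha$, $\abs{\im\lambda}^2=\tau-\alpha^2$, $\abs{\lambda}^2=\tau$; combining $\re\lambda=-\alpha\le-\alpha_0$, $\abs{\lambda}^2=\tau\ge\kappa_0$ and $\tau\ge\kappa+C_p^{-1/p}\alpha^{1/p}$ (from $\alpha\le C_p(\tau-\kappa)^p$) gives precisely the enclosure in~(i), which survives passing to the closure since the describing set is closed. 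If $\alpha^2\ge\tau$ the roots $\lambda=-s$, $s\ge0$, are real and substitution gives $\tau=2s\alpha-s^2$, i.e.\ $\alpha=(\tau+s^2)/(2s)$ for $s>0$; hence $-s\in W(\formt)$ forces $(\tau+s^2)/(2s)\le C_p(\tau-\kappa)^p$ for some $\tau\ge\kappa_0$, and the remaining task is to describe the set of such~$s$, equivalently $\{s>0:h(s)\le0\}$ with $h(s)\defeq\min_{\tau\ge\kappa_0}\bigl((\tau+s^2)/(2s)-C_p(\tau-\kappa)^p\bigr)$. The inner minimum is attained (the bracket is strictly convex and coercive in~$\tau$ because $p<1$) at $\tau^\ast=\kappa+(2C_pps)^{1/(1-p)}$ when $\tau^\ast\ge\kappa_0$, where $h(s)=\tfrac1{2s}\bigl(\kappa+s^2-\tfrac{1-p}{p}(2C_pps)^{1/(1-p)}\bigr)$, and at $\tau=\kappa_0$ otherwise, where $h(s)=(\kappa_0+s^2)/(2s)-C_p(\kappa_0-\kappa)^p$.

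The dichotomy in~(ii)--(v) then comes from analysing the sign of~$h$: since $\tfrac1{1-p}\gtrless 2\iff p\gtrless\tfrac12$, the term $s^{1/(1-p)}$ dominates $s^2$ for large~$s$ exactly when $p>\tfrac12$, so $\{h\le0\}$ is bounded for $p<\tfrac12$ and unbounded for $p>\tfrac12$; at $p=\tfrac12$ the comparison is governed by the coefficient $C_{1/2}^2$, hence the split by $C_{1/2}\lessgtr 1$, and the residual split by the sign of~$\kappa$ in the borderline case comes from the constant term of~$h$. The endpoints $s^\pm$, $r^+$ are the zeros of~$h$; for $\kappa=0$ in (iii)--(v) the inequality $h(s)\le0$ becomes an explicit power inequality, and the extreme real points of $W(\formt)$ are the extrema over~$\Gamma$ of $\tau\mapsto-C_p(\tau-\kappa)^p\pm\sqrt{C_p^2(\tau-\kappa)^{2p}-\tau}$ subject to $C_p^2(\tau-\kappa)^{2p}\ge\tau$; an elementary computation shows the interior critical point solves $\tau=4p(1-p)C_p^2(\tau-\kappa)^{2p}$, giving for $\kappa=0$ the quantity $t_0=\max\{(4C_p^2p(1-p))^{-1/(2p-1)},\kappa_0\}$ (the $\max$ enforcing $\tau\ge\kappa_0$), and substituting $\tau\in\{\kappa_0,t_0\}$ back into $\lambda_\pm$ and tracking which of $\lambda_+$ (rightmost) and $\lambda_-$ (leftmost) is extremised where yields the stated intervals. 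I expect the main obstacle to be exactly this last bookkeeping: one must simultaneously track the two regimes $\tau^\ast\ge\kappa_0$ versus $\tau^\ast<\kappa_0$, the sub-cases $p\lessgtr\tfrac12$ and $p=\tfrac12$ with the extra splitting by $C_{1/2}$ and by $\sgn\kappa$, and the sign of $\kappa_0$, and verify that each combination matches one of the enclosures in (ii)--(v) while checking the closedness of the describing sets, which is needed so that passing from $W(\formt)$ to $\overline{W(\formt)}$ does not enlarge them; by contrast, the conceptual reduction in the first two paragraphs is routine.
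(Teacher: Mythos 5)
Your proposal is correct and follows essentially the same route as the paper: infinitesimal form-boundedness via Young's inequality to get a type (a)/(B) family, Theorem \ref{thm:pseudo.dense.hol.fam} with $k=2$ to reduce everything to $\overline{W(\formt)}$, the real/imaginary-part split of the scalar quadratic $\formt(\lambda)[f]=0$ for part (i), and extremisation of $\tau\mapsto -C_p(\tau-\kappa)^p\pm\sqrt{C_p^2(\tau-\kappa)^{2p}-\tau}$ over the feasible set for the explicit endpoints in (iii)--(v), including the same critical point $t_0$. The only (equivalent) variation is that you organise the dichotomy in (ii) via the dual feasibility function $h(s)=\min_{\tau\ge\kappa_0}\bigl((\tau+s^2)/(2s)-C_p(\tau-\kappa)^p\bigr)$, whereas the paper analyses the sign set $D_{\le 0}$ of $d(x)=C_p^{-1/p}x^{1/(2p)}-x+\kappa$ directly; both yield the identical thresholds $p\gtrless\tfrac12$, $C_{1/2}\gtrless1$ and $\sgn\kappa$.
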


\begin{figure}[h]
	\hspace*{-4mm}
	\subcaptionbox*{\hspace{5.5mm}{\scriptsize (a)\,$p\!=\!0.4$,\,$C_p\!=\!1.3$,\,$\kappa\!=\!-2$, \\ 
	\hspace*{9mm} $\alpha_0\!=\!2.5$,\,$\kappa_0\!=\!5$}}{\includegraphics[width=0.322\textwidth]{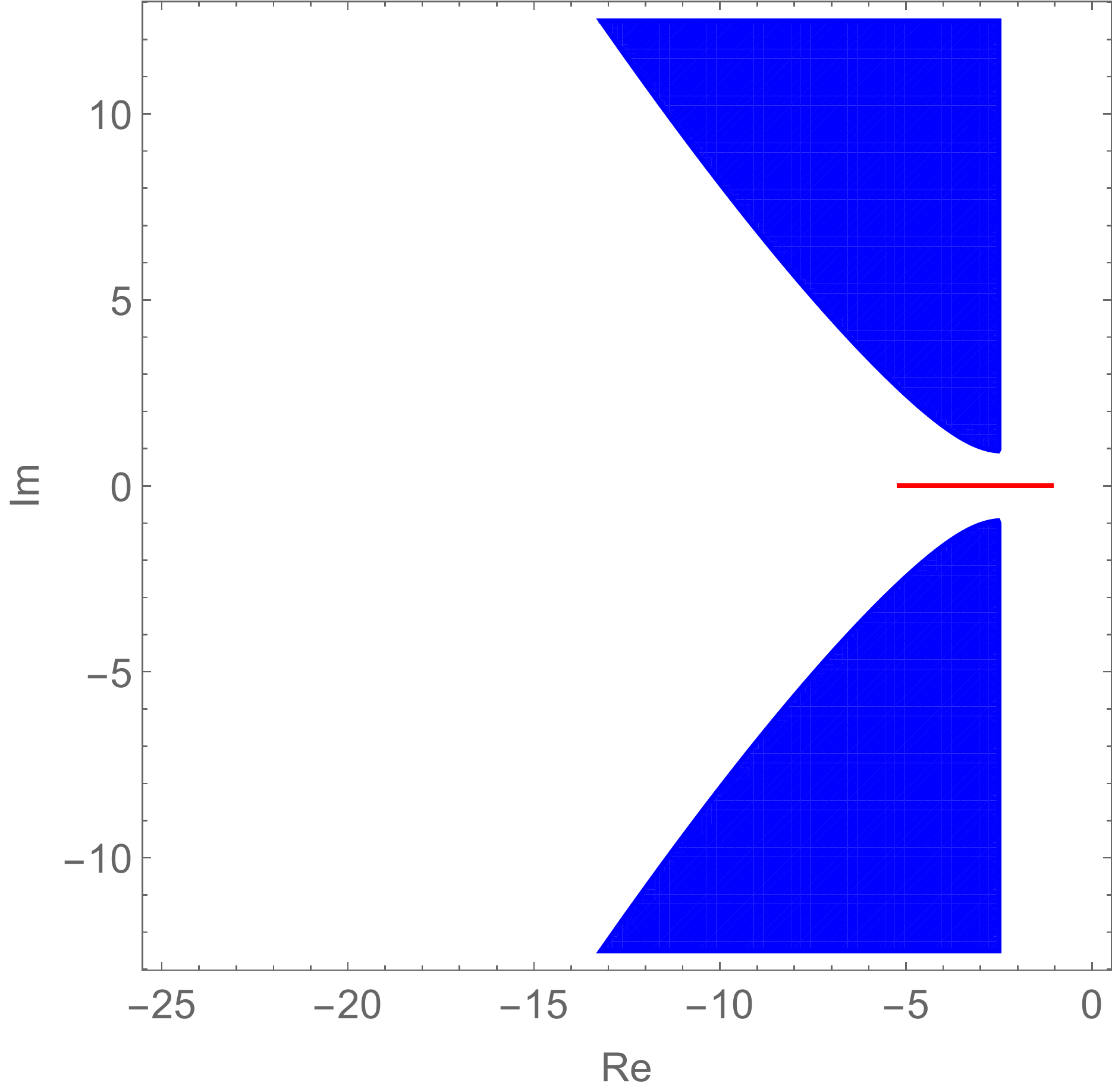}} \ 
	\subcaptionbox*{{\hspace{4.5mm}  \scriptsize (b)\,$p\!=\!0.5$,\,$C_p\!=\!0.7$,\,$\kappa\!=\!3$, \\
	\hspace*{9mm} $\alpha_0\!=\!0.5$,\,$\kappa_0\!=\!6$}}{\includegraphics[width=0.322\textwidth]{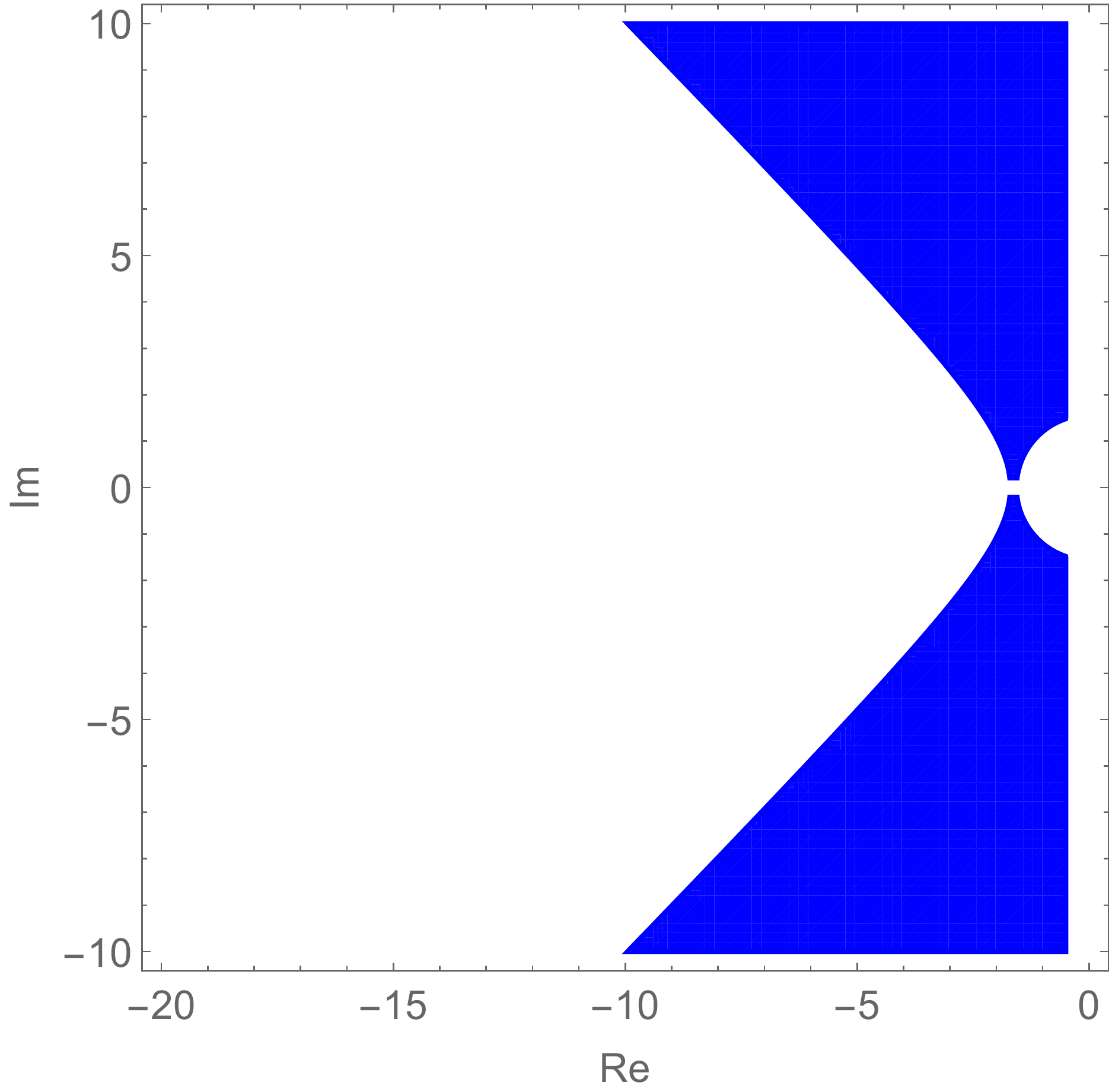}} \ 
	\subcaptionbox*{{\hspace{3mm} \scriptsize (c)\,$p\!=\!0.65$,\,$C_p\!=\!0.5$,\,$\kappa\!=\!-5$, \\
	\hspace*{8mm}$\alpha_0\!=\!1$,\,$\kappa_0\!=\!0$}}{\includegraphics[width=0.322\textwidth]{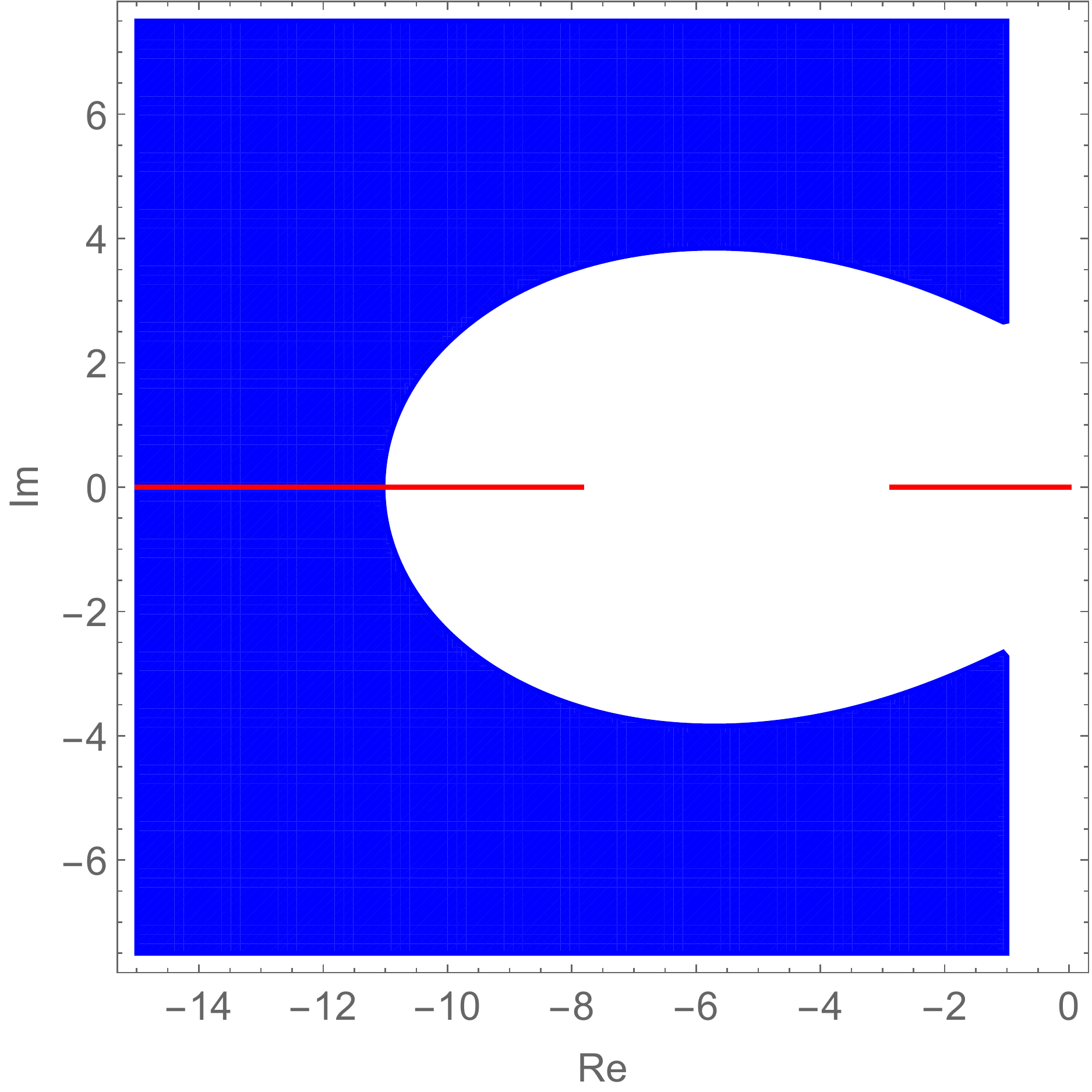}} 
	\caption{{\small 
	\! Enclosures for $\sigma(T) \!\setminus\! \R$ in Theorem~\ref{thm:pencil.spec.incl}~(i) (blue) and 
	for $\sigma(T)\cap\R$ in Theorem~\ref{thm:pencil.spec.incl}~(ii)-(v) (red in (a),\,(c), empty~in~(b)).}}
\vspace{-4mm}	
	\label{fig:dwe.spec.incl}
\end{figure}

\vspace{-1mm}	

\begin{rem}
		If \eqref{eq:pencil.subordinate} holds with $p=0$, then $\forma$ is bounded and $\norm{\forma} \le C_p= C_0$. In this case, 
		the spectrum of $T$ lies in a strip to the left of the imaginary axis; more precisely,
		the non-real spectrum of $T$ \vspace{-0.5mm} satisfies
		\begin{equation}
			\sigma(T)\setminus\R \subseteq \set{z \in \C}{-C_0 \le \re z \le -\alpha_0, \, |z| \ge \sqrt{\kappa_0} },
		\end{equation}
	while the real spectrum \vspace{-0.5mm}  satisfies
	\begin{equation}
		\sigma (T) \cap \R  \begin{cases}
			= \emptyset & \quad {\rm if } \,\, C_0^2 < \kappa_0, \\
			\subseteq [-C_0 - \sqrt{C_0^2-\kappa_0}, -C_0 + \sqrt{C_0^2-\kappa_0}] & \quad {\rm if } \,\, C_0^2 \ge  \kappa_0;
		\end{cases}
	\end{equation}
	notice that the latter corresponds to Theorem~\ref{thm:pencil.spec.incl} (iii) with $p=0$.
\end{rem}

\begin{proof}[Proof of Theorem {\rm \ref{thm:pencil.spec.incl}}]
	Clearly, $\formt$ is holomorphic. For arbitrary $\varepsilon>0$, applying Young's inequality to \eqref{eq:pencil.subordinate}, we obtain
	\begin{equation}
	\label{eq:pencil.rel.bdd}
		\begin{aligned}
			\forma[f] & \le \left(\frac{\varepsilon}{p}\right)^p \!\!\! \big((\formt_0-\kappa)[f]\big)^p \left(\frac{p}{\varepsilon}\right)^p C_{p}\big(\norm{f}^2\big)^{1-p}\\[-1mm]
			& \le \varepsilon  \big((\formt_0-\kappa)[f]\big)+ (1\!-\!p)\left(\frac{p}{\varepsilon}\right)^\frac{p}{1-p} C_p^\frac{1}{1-p} \norm{f}^2
	\end{aligned}
	\end{equation}
	for all $f\in\dom\formt_0$, i.e.\ $\forma$ is $\formt_0$-bounded with relative bound $0$. Hence, for each $\lambda\in\C$, the form $\formt(\lambda)$ is densely defined, sectorial and closed, see e.g.\ \cite[Thm.\ VI.1.33]{Kato-1995}. This shows that $\formt$ is a holomorphic family of type (a). Since all enclosing sets in Theorem \ref{thm:pencil.spec.incl} are closed and 
	\[
		\sigma(T) \subseteq W_{\Psi} (T)=W_{\Psi} (\formt) = \overline{W(\formt)}
	\]
	by Theorem \ref{thm:pseudo.dense.hol.fam} with $k=2$ and $\mu\in\C$ arbitrary, it suffices to show that $W(\formt)\setminus \R$ and $W(\formt)\cap \R$ satisfy the claimed enclosures.
	
	Let $\lambda_0\in W(\formt)$, i.e.\ there exists $f\in\dom\formt_0$, $\norm{f}=1$, with $\formt(\lambda_0)[f]=0$. Taking real and imaginary part in this equation, we conclude that
	\begin{align}
	\label{eq:pencil.re}
		\formt_0[f]+2\re\lambda_0\,\forma[f]+(\re\lambda_0)^2-(\im\lambda_0)^2 & =0, \\
	\label{eq:pencil.im}
		2\im\lambda_0\,\forma[f]+2\re\lambda_0\im\lambda_0 & =0.
	\end{align}
	First assume that ${\lambda_0 \!\in}W(\formt)\setminus\R$. 
	Dividing \eqref{eq:pencil.im} by $2\im\lambda_0$ ($\ne 0$) and inserting this into \eqref{eq:pencil.re}, we find
  	\begin{align}
		&\re\lambda_0=-\forma[f]\le-\alpha_0\le 0,  \label{eq:pencil.re.num.ineq}\\
		&\abs{\lambda_0}^2=(\im\lambda_0)^2+(\re\lambda_0)^2=\formt_0[f] \ge \kappa_0. 
		\label{eq:pencil.im.num.ineq-0}
	\end{align}
	Using these relations and assumption \eqref{eq:pencil.subordinate}, we can further \vspace{-1mm} estimate
	\begin{equation}
	\label{eq:pencil.im.num.ineq}
		(\im\lambda_0)^2 =\formt_0[f] - |\re \lambda_0|^2 \ge \max\{ 0, {C_p^{-\frac 1p} } \abs{\re\lambda_0}^\frac{1}{p} - |\re \lambda_0|^2 +\kappa \},
	\end{equation}
	and hence $\lambda_0 \!\in W(\formt)\!\setminus\!\R$ satisfies all three claimed inequalities in (i).

	Now assume that $\lambda_0 \!\in \!W(\formt)\cap\R$. Then $\forma[f]^2\!-\!\formt_0[f]\!\ge\!0$ and thus, in par\-ti\-cular, $\forma[f] \!\ge\! \max\{\alpha_0,\sqrt{\kappa_0}\}$. Moreover, since $\im\lambda_0\!=\!0$, equality \eqref{eq:pencil.im}~trivially holds and \eqref{eq:pencil.re} implies 
\begin{equation}
	\label{eq:real-Wt}
		\lambda_0 = -\forma[f]\pm\sqrt{\forma[f]^2-\formt_0[f]}\le 0
	\end{equation}	
	because~$\formt_0 \!\ge\! 0$. This, together with $\forma \!\ge\! \alpha_0$ and assumption \eqref{eq:pencil.subordinate}, yields \vspace{1mm} that
	\begin{equation}
	\label{eq:ineq-new}
		\max\big\{\alpha_0^2,\kappa_0\big\} \le \max\{ \alpha_0^2, \formt_0[f]\} \le \forma[f]^2 \le  C_p^2 \big((\formt_0-\kappa)[f]\big)^{2p}. 
	\end{equation}
	If we define  
	\begin{align*} 
	   &d(x):= C_p^{-\frac 1p} x^{\frac 1{2p}} \!-\!x \!+\! \kappa, \quad x\!\in\! [0,\infty),\quad
	   D_{\leq 0} := \big\{ x \!\in\! [\kappa_0,\infty): d(x) \leq 0\big\},
	\end{align*}
	then it is easy to see that $\formt_0[f] \!\in\! D_{\le 0}$; 
	in particular, $D_{\le 0} = \emptyset$ implies $W(\formt) \cap \R =\emptyset$. An elementary analysis shows that $d$ is either identically zero, has  no zero, one simple zero or two (possibly coinciding) zeros on $[0,\infty)$,	which we denote by $x_+$ and $x_-\le x_+$, respectively, if they exist.~Then	
	\begin{align}
    \label{eq:p.cases.1} 
   		p < &  
    	\frac 12  \mbox{ or } p = \frac12,  C_\frac12 < 1 \mbox{ or } p = \frac12, C_\frac12 = 1, \kappa>0 \\[1mm]
    	& \implies 	D_{\le 0} \!=\! \emptyset \mbox { or } D_{\le 0}  \mbox{ is bounded}, \ D_{\le 0} \!=\! [ \kappa_0,x_+] \mbox{ or } D_{\le 0}\!=\![x_-,x_+],
\intertext{}			
    \label{eq:p.cases.2}
    	p > &\frac 12  \mbox{ or }  p = \frac12, C_\frac12 > 1 \mbox{ or } p = \frac12, C_\frac12 = 1, \kappa\le 0   \\[1mm]
    	& \implies D_{\le 0} \!\ne\! \emptyset \mbox{ is unbounded},\ D_{\le 0}\!=\![\kappa_0,\infty)  \mbox{  or } D_{\le 0} \!=\! [x_+, \infty)\\
    	& \hspace{5.75cm}
			 \mbox{ or }  D_{\le 0}\!=\![\kappa_0,x_-]\!\cup\![x_+,\infty).
   	\end{align}
	Which case prevails for fixed $p \!\in\! [0,1)$ can be characterised by means of in\-equalities involving the constants $\kappa_0$, $\kappa$ and $C_p$. For~estimating $\lambda_0$ in \eqref{eq:real-Wt} while respecting the restrictions in \eqref{eq:ineq-new}, we consider the functions 
	\[ 
		f_\pm(s,t):= -s \pm \sqrt{s^2 \!-\! t}, \quad s\!\in\![\alpha_0,\infty), \ t\!\in\! [\kappa_0,\infty), \ t \!\le\! s^2 \!\le\! C_p^2(t-\kappa)^{2p}.
	\]
	It is easy to check that $f_+$ is monotonically	increasing in $s$ and monotonically	decreasing in $t$, while $f_-$ is monotonically decreasing in $s$ and monotonically	increasing in $t$ and hence, since $s \le C_p(t-\kappa)^{p}$,
	\begin{equation}
	\label{eq:gpm}
		\begin{aligned}
			f_+(s,t) & \le f_+(C_p(t-\kappa)^p,t) \eqdef g_+(t), \\
			f_-(s,t) & \ge f_-(C_p(t-\kappa)^p,t) \eqdef g_-(t).
		\end{aligned}
	\end{equation}
    
    Now we distinguish the two qualitatively different cases \eqref{eq:p.cases.1} and \eqref{eq:p.cases.2}. 
		To obtain the claimed enclosures for $W(\formt)\cap \R$, we	use \eqref{eq:ineq-new}, 
		\eqref{eq:real-Wt} and \eqref{eq:gpm} to conclude that $g_-(t) \leq \lambda_0 \leq g_+(t)$ for some $t \in D_{\leq 0}$. 
	  \\ 
		If \eqref{eq:p.cases.1} holds, there are the following two possibilities: \\[1mm]
    (1) If $d$ has no zeros on $[0,\infty)$ or if $d$ has at least one zero and $x_+ \!<\!\kappa_0$,~then $D_{\le0} =\emptyset$ and \vspace{-2mm} thus
    \[
    	W(\formt)\cap \R = \emptyset.
    \] 
	(2) If $d$ has at least	one zero $x_+$ and $x_+ \ge \kappa_0$, then $D_{\le0} 
	$ is one bounded interval and
	\begin{align*}
	\label{eq:spm}
		W(\formt)\!\cap \R \subseteq \!\big[s^-\!, s^+\big], \quad 
		& s^- \!\defeq\!\! \min_{t\in D_{\le0} 
		}g_-(t), \quad  s^+ \!\defeq\!\! \max_{t\in D_{\le0} 
		}g_+(t); 
	\end{align*}
	here if $d$ has only one zero $x_+$ or if $d$ has two zeros $x_\pm$ and $x_-<\kappa_0$, then $D_{\le0} = [\kappa_0,x_+]$ and if $d$ has two zeros and $x_-\ge\kappa_0$, then \vspace{1mm} $D_{\le0} =[x_-,x_+]$.

	\noindent
	If \eqref{eq:p.cases.2} holds, there are the following two possibilities: \\[1mm]
	(3) If $d$ has two zeros $x_\pm$  on $[0,\infty)$ and $x_- \!\ge\! \kappa_0$, then $D_{\le0}\!=\! [\kappa_0,x_-] \cup [x_+,\infty)$ and we obtain
	\begin{align*}
		W(\formt)\cap \R \!\subseteq \! \big(\!-\!\infty, r^+	\big] \! \cup \! 
		\big[ s^-\!,s^+ \big], \ \ & r^+ \!\defeq\!\! 
		\max_{t \in [x_+,\infty)} g_+(t), \ s^+ \!\defeq\!\!\! \max_{t\in [\kappa_0,x_-]}g_+(t), \\[-1mm]
		& s^- \!\defeq\!\!\! \min_{t\in [\kappa_0,x_-]}g_-(t) 
		;
	\end{align*}
	here $g_+$ attains a maximum on $[x_+,\infty)$ since $g_+(t)$ tends to $-\infty$ as $t\to\infty$, and analogously in the next case.
	
	\noindent
	(4) If $d$ has either at most one zero $x_+$ or two zeros $x_\pm$ on $[0,\infty)$ and $x_- < \kappa_0$, then $D_{\le0} = [\max\{\kappa_0, x_+\},\infty)$ and we conclude that 
	\[
		W(\formt)\cap \R \subseteq \big(-\infty, s^+\big], \quad s^+ \!\defeq\! \max_{t \in [\max\{\kappa_0, x_+\},\infty)} g_+(t). 
		\vspace{-2mm}
	\]
	This proves claim (ii). 
	
    Claim (iv) for $\kappa\!=\!0$ and $p\!=\! \frac 12$ follows from cases (1), (2) and (4) above if we note that then  $d(x)=(C_{\frac 12}^{-2}\!-\!1)x$, $x\!\in\![0,\infty)$, is either identically \vspace{-2mm} zero or
     has the only zero $x_+\!=\!0$   and, for case (4), $g_+(t)\!=\!-t^{\frac 12} \big(C_{\frac 12}\!+\! \sqrt{C_{\frac 12}^2\!-\!1}\big)$ \vspace{-2mm}  is montonically decreasing 
    so that $s^+=g_+(\kappa_0)$.
    
	Finally, if $\kappa\!=\!0$ and $p\!\ne\! \frac 12$,  the function $d$ has the two zeros \vspace{-1.5mm} $x_-\!=\!0$ and $x_+ \!=\! (C_p^2)^{\frac 1{1-2p}}$ on $[0,\infty)$, and the respective bounds $r^+$, $s^\pm$ above can be determined explicitly to deduce claims (iii) and (v). More precisely, claim (iii) follows from cases (1) and (2) if we note that, in (2), 
	$D_{\le0} = [\kappa_0, x_+]$, $g_+$ is monotonically decreasing on $[0, x_+]$
	and $g_-$ 
	attains its minimum \vspace{-1mm} on $[0,x_+]$ at $t=\big( 4C_p^2p(1\!-\!p) \big)^{-\frac 1{2p-1}}$. Claim (v) follows from cases (4) if $\kappa_0>0$ and (3) if $\kappa_0=0$; note that, for $\kappa\!=\!0$,  case (3) where $p\!>\!1/2$ can only occur if $\kappa_0\!=\!0$. In both cases, we use \vspace{-1mm} that $g_+$ attains its maximum on $[x_+ 
	,\infty)$ at $t=\big( 4C_p^2p(1\!-\!p) \big)^{-\frac 1{2p-1}}$.
\end{proof}

\begin{rem}
If \eqref{eq:pencil.subordinate} holds with $\kappa \le \kappa_0$ and $p\in[0,1)$, then it holds for every $q\in(p,1)$ with $\kappa_1 \le \kappa$ such that $\kappa_1 < \kappa_0$.

Indeed, then $\formt_0\!-\!\kappa\! \le\! \formt_0\!-\!\kappa_1$ and $\formt_0\! -\!\kappa_1 \!\ge\! \kappa_0\!-\!\kappa_1 \!>\!0$ which implies that $(\|f\|^2)^{q-p} \!\le\! (\kappa_0\!-\!\kappa_1)^{p-q} \big((\formt_0\!-\!\kappa_1)[f] \big)^{q-p}$\!\!, 
$f\!\in\! \dom \formt_0$. Hence \eqref{eq:pencil.subordinate} holds with $q$, $\kappa_1$ and $C_q=C_p (\kappa_0\!-\!\kappa_1)^{p-q}$. 
\end{rem}

\begin{rem}
\label{Jacob-Trunk}
	As a special case of Theorem \ref{thm:pencil.spec.incl} we obtain the enclosure for the non-real spectrum proved in \cite[Thm.\ 3.2,~Part 5]{Jacob-Trunk-2009} (where the damping was only assumed to be accretive) and we considerably improve the enclosure for the real spectrum therein since we obtain that the latter is, in fact, empty. The assumption in \cite[Thm.\ 3.2,~Part 5]{Jacob-Trunk-2009} is~that   
	\begin{equation}
	\label{eq:JT09}
 		\nu:= \sup_{f\in\dom\formt_0\setminus\{0\}} \frac{2\forma[f]}{\formt_0[f]^{1/2}\|f\|} \in (0,2).
	\end{equation}
	The parameters $a_0$, $\beta$ and $\nu$ in \cite[(5) and p.\ 83]{Jacob-Trunk-2009} correspond to the following special choices in Theorem~\ref{thm:pencil.spec.incl} and assumption \eqref{eq:pencil.subordinate}:
	\[
 		p=\frac 12, \quad C_{\frac 12} = \frac \nu 2, \quad \kappa=0, \quad \kappa_0 = a_0^2 >0, \quad \alpha_0 = \frac \beta 2.
	\]
	Under the assumption \eqref{eq:JT09} made in \cite[Thm.\ 3.2,~Part 5]{Jacob-Trunk-2009}, Theorem~\ref{thm:pencil.spec.incl}~(i) yields the spectral \vspace{-2mm} enclosure
	\[
		\sigma(T) \setminus \R \subseteq \! \bigg\{ z\!\in\C: \re z\!\le\!-\frac \beta 2, \, \abs{z} \!\ge\! a_0, \, \, \abs{\im z}\!\ge\! \sqrt{\frac 4{\nu^2}\!-\!1\,}\abs{\re z} \bigg\}.
	\]
	This enclosure is the same as in \cite[Thm.\ 3.2,~Part 5]{Jacob-Trunk-2009}. However, since $\nu\!<\!2$ is equivalent to $C_{\frac 12} \!<\!1$, the enclosure $\sigma(T)\cap \R \subseteq (-\infty,-\frac {a_0}\nu-\frac{4a_0}{\nu^3}]$ in \cite[Thm.\ 3.2,~Part 5]{Jacob-Trunk-2009} is considerably improved by Theorem~\ref{thm:pencil.spec.incl} (iv) to
	\[\sigma(T)\cap \R = \emptyset.\]
\end{rem}


\begin{rem}
	In the second case in Theorem~\ref{thm:pencil.spec.incl}~(ii), i.e.\ if $p \!>\! \frac 12$ or $p \!=\! \frac12$, $C_\frac12 \!>\! 1$ or $p \!=\! \frac12$, $C_\frac12 \!=\! 1$, $\kappa\!\le\! 0$, the set  $W(\formt)\cap(-\infty,0]$ used to enclose the spectrum can, indeed, be unbounded if so is $\formt_0$. 
	
	In fact, if $W(\formt_0) \!=\! [\kappa_0,\infty)$, we can choose $\forma\!=\!C_p(\formt_0-\kappa)^p$.  Then there exist $f_n\!\in\!\dom\formt_0$, $\norm{f_n}\!=\!1$, with $\formt_0[f_{n}] \!\ge\! n$ for $n\!\in\!\N$.
	T
	he conditions on~$p$, $C_p$ and $\kappa$ ensure, comp.~\eqref{eq:p.cases.2}, that $C_p^2(\formt_0[f_{n}]-\kappa)^{2p}-\formt_0[f_{n}]\ge0$ for sufficiently large $n\in\!\N$ and thus
	\begin{equation}
	W(\formt)\cap(-\infty,0]\ni\lambda_0\!=\!-\formt_0[f_n]^p\!-\!\sqrt{\formt_0[f_n]^{2p}\!-\!\formt_0[f_n]}\le-\formt_0[f_n]^p \le -n^p \to -\infty,
	\vspace{-1mm}
	\end{equation}
	and hence $\inf \left(W(\formt)\cap(-\infty,0]\right)=-\infty$.
\end{rem}

In the next example we apply Theorem \ref{thm:pencil.spec.incl} to linearly damped wave equations with possibly unbounded and/or singular damping. 

\begin{exple}
\label{ex:damped.wave.PDE}
	Let $\Hh=L^2(\Rd)$ with $d\ge3$ and $a$, $q\in\Loneloc(\Rd)$, $a \neq 0$ and
	$a,q\ge0$ almost everywhere. If $\dom a^\frac{1}{2}$ and $\dom q^\frac{1}{2}$ denote the maximal domains of the multi\-plication operators $a^\frac{1}{2}$ and $q^\frac{1}{2}$ in $L^2(\Rd)$, respectively, we define the quadratic forms $\forma$ and $\formt_0$ in $L^2(\Rd)$ \vspace{-1mm}by
	\begin{equation}
		\begin{aligned}
			\forma[f] & \defeq\int_{\Rd}a\abs{f}^2\d x, \quad & \dom\forma& \defeq\dom a^\frac{1}{2}, \\
			\formt_0[f] & \defeq\int_{\Rd}\abs{\nabla f}^2\d x+\int_{\Rd}q\abs{f}^2\d x, \qquad  & \dom\formt_0 & \defeq H^1(\Rd)\cap\dom q^\frac{1}{2}.
		\end{aligned}
	\end{equation}
	Suppose that, for almost all $x\in\Rd$,
	\begin{equation}
	\label{eq:dwe.pot.damp.inequ}
	a(x)\le\sum_{j=1}^n\abs{x-x_j}^{-t}+ u(x) + v(x), \qquad v(x) \le c_1 q(x)^r+c_2,
	\end{equation}
	where $u \in L^s (\Rd)$ with $s>d/2$, $v\!\in\!\Loneloc(\Rd)$, $t\!\in\![0,2)$, $n\!\in\!\N_{0}$, $x_j\!\in\!\R^d$ for $j\!=\!1,\dotsc,n$, $c_1$, $c_2\!\ge\!0$ and $r\!\in\![0,1)$. Then $\forma$, $\formt_0$ are closed, $\forma$, $\formt_0\ge 0$ and, without further assumptions, we only know that $\alpha_0 \ge 0$, $\kappa_0 \ge 0$ in Theorem~\ref{thm:pencil.spec.incl}. In order to verify \eqref{eq:pencil.subordinate}, let $f\in\dom\formt_0$ with $\norm{f}=1$. By H\"older's and Hardy's inequality, for $1\le j\le n$,
	\begin{equation}
	\label{eq:dwe.Hardy}
		\begin{aligned}
			\int_{\Rd}\!\!\abs{x\!-\!x_j}^{-t}\abs{f}^2 \d x & \le \left(\int_{\Rd}\!\!\abs{x\!-\!x_j}^{-2}\abs{f}^2\d x\right)^\frac{t}{2} \!\le\! \frac{2^t}{(d-2)^t}\norm{\nabla f}^t.
		\end{aligned}
	\end{equation}
	Moreover, by Gagliardo-Nirenberg-Sobolev's inequality, there exists a constant $G_d>0$ depending only on the dimension $d$ such that
	\begin{equation}
		\norm{f}_{L^{2^*}(\Rd)} \le G_d \norm{\nabla f}, \quad f\in H^1 (\Rd), \quad 2^*\defeq \frac{2d}{d-2},
	\end{equation}
	where $2^*\!>\!2$ is the critical Sobolev exponent for the embedding $H^1 (\Rd) \!\hookrightarrow\! L^{2^*}(\Rd)$. Since $d/s \!\in\! (0,2)$, we can use H\"older's inequality with three terms to estimate
	\begin{equation}
		\int_{\Rd} u |f|^2 \d x \le \norm{u}_{L^s(\Rd)} \left(\int_{\Rd} |f|^{\frac ds \frac{2s}{d-2}} \d x\right)^\frac{d-2}{2s} \left(\int_{\Rd} |f|^{\left(2-\frac ds\right)\frac{2s}{2s-d}} \d x\right)^\frac{2s-d}{2s}.
	\end{equation}
	This inequality, together with the relations
	\begin{equation}
		\frac ds \frac{2s}{d-2} = 2^*, \quad \frac{d-2}{2s} = \frac{d}{2^*s}, \quad \left(2-\frac ds\right)\frac{2s}{2s-d} = 2,
	\end{equation}
	and $\norm{f}=1$, yields that
	\begin{equation}
	\label{eq:Sobolev}
		\int_{\Rd} u |f|^2 \d x \le \norm{u}_{L^s(\Rd)} \norm{f}_{L^{2^*} (\Rd)}^\frac ds \le \norm{u}_{L^s(\Rd)}  G_d^\frac ds \norm{\nabla f}^\frac ds.
	\end{equation}
	Next the bound on $v$ in \eqref{eq:dwe.pot.damp.inequ} with $r\in[
	0,1)$, H\"older's inequality with $1/r\in (1,\infty]$, $1/(1-r)\in [1,\infty) 
	$ and $\norm{f}=1$ give
	\begin{equation}
	\label{eq:dwe.v.subord}
		\int_{\Rd} v |f|^2 \d x \le c_1\int_{\Rd}q^r\abs{f}^2\d x + c_2 \le c_1\left(\int_{\Rd}q\abs{f}^2\d x\right)^r + c_2.
	\end{equation}
	Combining the inequalities \eqref{eq:dwe.Hardy}, \eqref{eq:Sobolev} and \eqref{eq:dwe.v.subord}, we arrive at
	\begin{equation}
	\label{eq:dwe.damp.subord}
		\begin{aligned}
			\hspace{-2.5mm}\forma[f] \!&\le\! \frac{n2^t}{(d\!-\!2)^t}\norm{\nabla f}^t  \!\!+\! \norm{u}_{L^s(\Rd)}  G_d^\frac ds \norm{\nabla f}^\frac ds \!+\! c_1\left(\int_{\Rd} \!q\abs{f}^2\d x\right)^{\!r}\!\!\!+\!c_2  \hspace{-4mm}
			\\
		&=: \alpha_1 (\norm{\nabla f}^2)^{\frac t2} \!\!+\! \alpha_2 (\norm{\nabla f}^2)^{\frac d{2s}} \!+\! \alpha_3 \left(\int_{\Rd} \!q\abs{f}^2\d x\right)^{r} \!+\! \alpha_4.	
		\end{aligned}
	\end{equation}
	In order to further bound \eqref{eq:dwe.damp.subord}, we estimate $\alpha_1 x_1^{p_1} \!\!+\! \alpha_2 x_2^{p_2} \!+\! \alpha_3 x_3^{p_3} \!+\! \alpha_4$~with $x_i \!\ge\! 0$, $p_i \!\in\! [0,1)$, $i\!=\!1,2,3$, and $\alpha_i \!\ge\! 0$, $i\!=\!1,2,3,4$; note that $x_1\!=\!x_2=\|\nabla f\|^2$ in~\eqref{eq:dwe.damp.subord}. If we set $p \!\defeq\! \max \{p_1,p_2,p_3\}$ and maximise $\delta(x) \!\defeq\! x^{p_i} \!-\! x^p\!$,  $x\in [0,1]$, $i\!=\!1,2,3$, we \vspace{-2mm}find~that
    \begin{equation}
	\label{eq:delta}
    x_i^{p_i} \!\le\! x_i^{p} \!+\!\delta_i, \quad 		\delta_i \defeq \begin{cases}
			\hspace{8mm} 0 & \mbox{if } p_i = p, \\
			\frac{p-p_i}{p} \left(\frac{p_i}{p}\right)^\frac{p_i}{p-p_i} & \mbox{if } p_i < p,
		\end{cases}
	\quad i=1,2,3.	
	\end{equation}
	If $\max \{\alpha_1, \alpha_2, \alpha_3\} \neq 0$, then
	\begin{equation}
	\label{eq:gamma.p}
		\gamma_p \defeq \alpha_1 (1 \!+\! \delta_1) \!+\! \alpha_2 (1 \!+\! \delta_2) \!+\! \alpha_3 (1 \!+\! \delta_3) \!+\! \alpha_4 \neq0.
	\end{equation}
	If we use \eqref{eq:delta}, the concavity of  $x\!\mapsto\! x^{p}$ on $[0,\infty)$ and $x_1\!=\!x_2$, 
	we~obtain
	\begin{align*}
		& \alpha_1 x_1^{p_1} \!\!+\! \alpha_2 x_2^{p_2} \!+\! \alpha_3 x_3^{p_3} \!+\! \alpha_4
		\, \le \,\alpha_1 (x_1^p \!+\! \delta_1) \!\!+\! \alpha_2 (x_2^{p} 	\!+\!\delta_2 ) \!+\! \alpha_3 (x_3^p \!+\! \delta_3) \!+\! \alpha_4 \\[1mm]
		&= \gamma_p \Big( \frac{\alpha_1}{\gamma_p 	} x_1^{p} 	\!\!+\! \frac{\alpha_2}{\gamma_p } x_2^p + \frac{\alpha_3}{\gamma_p} x_3^p\!\!+\! \frac{\alpha_1\delta_1 + \alpha_2 \delta_2\!+\!\alpha_3\delta_3 + \alpha_4}{\gamma_p } \Big) \\
		&\le \gamma_p 	\Big( \frac{\alpha_1}{\gamma_p } x_1 \!+\! \frac{\alpha_2}{\gamma_p } x_2 + \frac{\alpha_3}{\gamma_p} x_3 \!+\! \frac{\alpha_1\delta_1 + \alpha_2  \delta_2 \!+\!\alpha_3 \delta_3 + \alpha_4}{\gamma_p } \Big)^{p} \\[1mm]
		&= \gamma_p ^{1-p} \big( (\alpha_1 + \alpha_2) x_1 \!+\! \alpha_3 x_3 + \alpha_1\delta_1 + \alpha_2 \delta_2 \!+\!\alpha_3 \delta_3 + \alpha_4 \big)^{p} \\[1mm]
		&\le \gamma_p ^{1-p} \max\{ \alpha_1 +\alpha_2, \alpha_3\}^{p} \Big( x_1 \!+\! x_3 \!+\! \frac{\alpha_1\delta_1 + \alpha_2 \delta_2 \!+\!\alpha_3 \delta_3+\alpha_4}{\max\{ \alpha_1+\alpha_2, \alpha_3\}} \Big)^{p}.  
	\end{align*}
    If $\max\{n,\norm{u}_{L^s(\Rd)},c_1\} \!\neq\! 0$, we can apply this estimate to \eqref{eq:dwe.damp.subord}  with $p_1 \!=\! t/2$, $p_2 \!=\! d/(2s)$, $p_3 \!=\! r$, $\delta_i$, $i\!=\!1,2,3$, as in \eqref{eq:delta} to obtain that $\dom\formt_0\subseteq\dom\forma$ and assumption \eqref{eq:pencil.subordinate} holds with the 
    parameters
	\begin{equation}
	\label{vne0}
		\begin{aligned}
			&p\!=\!\max\Big\{\frac{t}{2},\frac{d}{2s}, r\!\Big\}, \quad
			C_p \!=\! \gamma_p^{1-p} \max\Big\{ \frac{n2^t}{(d\!-\!2)^t} + \norm{u}_{L^s(\Rd)} G_d^{\frac ds},c_1 \Big\}^{\!p}\!\!, \\
			&\kappa \!=\!- \frac{ n2^t \delta_1 + (d-2)^t (\norm{u}_{L^s(\Rd)} G_{d}^{\frac ds} \delta_2 + c_1 \delta_3 + c_2)}{\max\{ n2^t \!+\! (d\!-\!2)^t \norm{u}_{L^s(\Rd)} G_d^{\frac ds},\,(d-2)^t c_1\}},
		\end{aligned}
	\end{equation}
	where, according to \eqref{eq:gamma.p},
	\begin{equation}
			\gamma_p = \frac{n2^t}{(d-2)^t} (1 \!+\! \delta_1) \!+\! \norm{u}_{L^s(\Rd)} G_d^\frac ds (1 \!+\! \delta_2) \!+\! c_1 (1 \!+\! \delta_3) \!+\! c_2.
	\end{equation}
	If $\max\{n,\norm{u}_{L^s(\Rd)},c_1\} \!=\! 0$, i.e.~$n=0$, $u\equiv0$ and $c_1=0$, then the damping $a=v$ is bounded, our assumption $a\neq 0$ implies $c_2>0$ and \eqref{eq:pencil.subordinate} trivially holds with $p=0$, $C_0=c_2 =\|a\|_\infty$ and $\kappa \le d=\kappa_0$ arbitrary.
	
	The constants in \eqref{vne0} in the general case $\max\{n,\norm{u}_{L^s(\Rd)},c_1\} \!\neq\! 0$ simplify substantially
	if either $n\!=\!0$, $u\!\equiv\! 0$ or $v \!\equiv\! 0$. If e.g.~two of $n$, $u$ or $v$ vanish, the constants $p$, $C_p$ and $\kappa$, which may be read off from \eqref{eq:dwe.Hardy}, \eqref{eq:Sobolev} or \eqref{eq:dwe.v.subord},  are also obtained as special cases of \eqref{vne0}. For \vspace{-1.5mm} instance, 
	\begin{alignat*}{4}
		&p=\frac t2, \quad &&C_{\frac t2} = \frac{n2^t}{(d-2)^t}, \quad &&\kappa = 0
		&&\quad \mbox{if $n\neq 0$, $u\equiv0$ and $v\equiv0$},\\[-1mm]
		&p=\frac d{2s}, \ &&C_\frac{d}{2s} = \norm{u}_{L^s(\Rd)} G_d^\frac ds, \ &&\kappa = 0
		&&\quad \mbox{if $n= 0$, $u\not\equiv0$ and $v\equiv0$},\\
		&p=r, \quad &&C_r = (c_1\!+\!c_2)^{1-r}  c_1^r, \quad &&\kappa = -\frac{c_2}{c_1}
		&&\quad \mbox{if $n= 0$, $u\equiv0$ and $v\not\equiv0$, $c_1\!>\!0$};
	\end{alignat*}
	in \eqref{vne0} these are the 3 cases $\delta_1 = 0$ with $c_1=c_2=r=0$ and $s$ sufficiently large such that $d/(2s)<r$,
	$\delta_2 = 0$ with $t=c_1=c_2=r=0$, and $\delta_3=0$ with $t=0$ and $s$ sufficiently large, respectively. The cases where only one of $n$, $u$ or $v$ vanishes are similar and are left to the reader.	

	As a special case, we \vspace{-1.5mm} consider 
	\begin{equation}
		a(x)=\abs{x}^{k} \ \mbox{with } k \in[0,2), \quad q(x)=\abs{x}^2, \quad x\in\Rd.
	\vspace{-1.5mm}	
	\end{equation}
	Here $\alpha_0\!=\!0$ and  we can choose $\kappa_0 >0$ as the ground energy of the harmonic oscillator, cf. \cite[Sec.\ XIII.12]{Reed-Simon-1978}, \vspace{-1mm}i.e.\
	\begin{equation}
	\kappa_{0}=\inf_{f\in \dom\formt_0}\frac{\formt_0[f]}{\norm{f}^2}=\frac{\formt_0[f_0]}{\norm{f_0}^2}=d,
	\vspace{-1mm}
	\end{equation}
	where $f_0(x)=\exp(-\abs{x}^2\!/2)$, $x\in\Rd$, is the (non-normalised) ground state of the harmonic oscillator.
	Moreover, in this special case $a$ satisfies \vspace{-1.5mm} \eqref{eq:dwe.pot.damp.inequ}~with
	\[
		n\!=\!0, \quad t\!=\!0, \quad u \equiv 0, \quad v \equiv a, \quad r = \frac{k} 2, \quad c_1 = 1, \quad c_2=0,
	\vspace{-1mm}	
	\]
	and by what was shown above, condition \eqref{eq:pencil.subordinate} holds\vspace{-1mm}  with
	\[
  		p=\frac k 2, \quad C_p=1, \quad \kappa=0.
	\vspace{-1mm}
	\]
	Hence the results in Theorem \ref{thm:pencil.spec.incl} (iii), (iv) and (v) yield \vspace{-1mm} that
	\begin{equation}
	\label{eq:dwe.comp.incl}
		\sigma(T) \setminus\R\subseteq\Big\{z\!\in\!\C:\re z\!\le\!0, \, \abs{z}\!\ge\! \sqrt{d}, \, |\im z| \!\ge\! 
		\sqrt{\max\{0,\abs{\re z}^{\!\frac{2}{k} 
		}\!\!-\!\abs{\re z}^2\}}\Big\}
		\vspace{-2mm}
	\end{equation}
	\vspace{-2mm}and
	\begin{align*}
		\sigma(T) \cap \R 
		\begin{cases} 
			= \emptyset & \mbox { if } k\in[0,1), \\
			\subseteq(-\infty,-\sqrt{d}] & \mbox { if }k=1, \\
			\subseteq \!\Big(\!\!-\!\infty,-\sqrt{t_0}^{k}\!+\!\sqrt{t_0^{k}\!-\!t_0 } \,\Big]  & \mbox { if } k\in (1,2),
		\end{cases}
	\end{align*}
where in the latter case $t_0=\max\big\{ \big( k(2-k) \big)^{-\frac 1{k-1}},d\big\}$.
\end{exple}

{\bf Acknowledgements.} 
{\small 
The authors gratefully acknowledge the support of the Swiss National Science Foundation (SNF)
by the grants no.\ $200021\_169104$ and $200021\_204788$.
}

\medskip

{\bf Data availability statement.}
{\small
Data sharing not applicable to this article as no datasets were generated or analysed during the current study.
}

\bibliographystyle{acm} 
\bibliography{references} 

\end{document}